  \theoremstyle{plain}
\newtheorem{thm}{Theorem}[section]
\Crefname{thm}{Theorem}{Theorems}
\newtheorem*{thm*}{Theorem}
\newtheorem*{mthm*}{Main Theorem}
\newtheorem{prop}[thm]{Proposition}
\Crefname{prop}{Proposition}{Propositions}
\newtheorem{conj}[thm]{Conjecture}
\Crefname{conj}{Conjecture}{Conjectures}
\newtheorem{cor}[thm]{Corollary}
\Crefname{cor}{Corollary}{Corollaries}
\newtheorem*{mcor*}{Main Corollary}
\newtheorem{lem}[thm]{Lemma}
\newtheorem*{cor*}{Corollary}
\newtheorem*{obs*}{Observation}
\newtheorem*{sum*}{Summary}
\newtheorem*{claim*}{Claim}
  \theoremstyle{definition}
\newtheorem{qn}{Question}
\newtheorem*{qn*}{Question}
  \theoremstyle{remark}
\newtheorem*{conv}{Convention}
\newcommand{\mf}{\mathfrak}
\newcommand{\mb}{\mathbb}
\newcommand{\mc}{\mathcal}
\newcommand{\free}{\mb F}
\newcommand{\G}{\mb G}
\newcommand{\Z}{\mb Z}
\newcommand{\out}{\operatorname{Out}}
\newcommand{\Deg}{\operatorname{deg}}
\newcommand{\rank}{\operatorname{rank}}
\newcommand{\cay}{\operatorname{Cay}}
\newcommand{\fakeenv}{} 
\newenvironment{restate}[1]  
{
  \renewcommand{\fakeenv}{#1} 
  \theoremstyle{plain}
  \newtheorem*{\fakeenv}{\Cref{#1}} 
  \begin{\fakeenv}
}
{
  \end{\fakeenv}
}
\title{On polynomial free-by-cyclic groups}
\author{Jean Pierre Mutanguha\thanks{\emph{Email:} {\tt \href{mailto:jpmutang@princeton.edu}{jpmutang@princeton.edu}}, \emph{Web address:} {\tt \url{https://mutanguha.com}} \newline Department of Mathematics, Princeton University, Princeton, NJ, USA}}
\begin{document}
\maketitle

\begin{abstract}
A free-by-cyclic group can often be viewed as a mapping torus of a free group automorphism (monodromy) in multiple ways.
What dynamical properties must these monodromies share, and to what extent are they invariant under quasi-isometries?
We give a new proof using cyclic splittings that the growth type of a monodromy is a geometric invariant  of the free-by-cyclic group;
we also characterise the degree of polynomial growth using slender splittings.
For exponential growth, we conjecture that the nesting of attracting laminations is a geometric invariant.
\end{abstract}


\noindent These notes are expanded from a minicourse I taught at CRM-Montr\'eal in May 2023.
The minicourse consisted of three lectures that covered: dynamics of free group automorphisms (\cref{secPolyGrowth}); group invariance of growth type (\cref{secGrpInv}); and geometric invariance of growth type (\cref{secGeomInv,secRelHyp}).
We have included some new results, proofs of folklore statements, and new proofs of published results.


\section{Introduction}\label{secIntro}

A group is \underline{free-by-cyclic} if it has a free normal subgroup whose quotient group is cyclic;
we will insist the free subgroup is finitely generated and not trivial and the quotient is infinite.
In the last two sections, we will also assume the free subgroup is not cyclic to rule out, for convenience, the fundamental group of a torus or Klein bottle.

Rephrasing the definition, a free-by-cyclic group~$\G$ fits in a short exact sequence 
\[ 1 \to \free \to \G \to \Z \to 1, \text{ where } \free \text{ is finitely generated free and not trivial.} \]
Since~$\Z$ is free, the short exact sequence splits, and $\G \cong \free \rtimes_\Phi \Z$ for some automorphism $\Phi \colon \free \to \free$ that is well-defined up to composition with an inner automorphism of~$\free$.
There are of course plenty of things one can say about~$\G$;
we focus on how the dynamics of the outer automorphism $\phi = [\Phi]$ relate to the algebraic and geometric properties of~$\G$.

This is particularly interesting when~$\G$ has two finitely generated normal free subgroups $\free \neq \free'$ with cyclic quotients, and we get an isomorphism $\free \rtimes_\Phi \Z \cong \free' \rtimes_\Psi \Z$.
What dynamical properties must the outer automorphisms~$\phi$ and $\psi = [\Psi]$ have in common?

\begin{qn}
Generally, what must~$\phi, \psi$ have in common if $\G \cong \free \rtimes_\Phi \Z$ is quasi-isometric to $\G' \cong \free' \rtimes_\Psi \Z$?
(See \cref{secGeomInv} for a definition of quasi-isometry.)
\end{qn}

Our main result states that the two outer automorphisms must share growth type.

\begin{restate}{corQIInv}
If~$\G, \G'$ are quasi-isometric and~$\phi$ is polynomially growing, then so is~$\psi$.
\end{restate}

It is then a theorem of Nata\v{s}a Macura that they must also share their degrees of growth $\Deg(\psi) = \Deg(\phi)$ (\cref{thmDegQIInv}).
{Polynomial growth} and {degree} are defined in \cref{secPolyGrowth}.
\cref{corQIInv} was recently proven using relative hyperbolicity (\cref{thmRelHyp}), while our proof uses cyclic splittings.
Along the way, we give a very short proof that growth type is a group invariant (\cref{thmGroupInv}).
The previously known proof of group invariance through relative hyperbolicty was a bit overkill.

We have included characterisations and observations that may be known to experts but do not explicitly appear elsewhere;
here are three such observations.
\begin{restate}{lemIntersect}
If $\G' \le \G$ is a free-by-cyclic subgroup, then $\G' \cap \free$ is finitely generated.
\end{restate}

\noindent By  Feighn--Handel's coherence result \cite[Prop.\,2.3]{FH99}, we can replace ``free-by-cyclic subgroup'' with ``finitely generated noncyclic subgroup with Euler characteristic~0''.

\begin{restate}{lemUndistorted}
Free-by-cyclic subgroups of~$\G$ are undistorted.
\end{restate}
\begin{restate}{lemUndistorted2}
Slender subgroups of~$\G$ are undistorted.
\end{restate}

A finitely generated subgroup of~$\G$ is undistorted if the inclusion is a q.i.-embedding;
for instance, $\free$ is distorted in~$\G$ when~$\phi$ is exponentially growing or polynomially growing with degree at least 2.
A group is \underline{slender} if every subgroup is finitely generated;
for example, finitely generated abelian groups are slender.
Slender subgroups of~$\free$ are cyclic, and slender subgroups of~$\G$ are cyclic or $\Z \rtimes \Z$.
We also consider splittings of~$\G$ over slender subgroups and the {depths} of {slender hierarchies} for~$\G$ as defined in \cref{secGrpInv}.

\begin{restate}{lemZFreeSplit}
If~$\free$ is not cyclic, then slender (resp. cyclic) splittings of~$\G$ are naturally in one-to-one correspondence with $\phi$-fixed cyclic (resp. free) splittings of~$\free$.
\end{restate}

This correspondence allows us to give an algebraic characterisation of~$\Deg(\phi)$ when $\G$ is polynomial, i.e.~$\phi$ is polynomially growing.

\begin{restate}{corGroupInv}
$\Deg(\phi)$ is the minimal depth of slender hierarchies for polynomial~$\G$.
\end{restate}

With the hope of turning this characterisation into a geometric invariant, we guess that slender splittings of~$\G$ are preserved by quasi-isometries.

\begin{restate}{conjSlender}
If $\G, \G'$ are quasi-isometric, then a slender splitting of~$\G$ induces a slender splitting of~$\G'$.
\end{restate} 

For cyclic splittings, the conjecture is a theorem of Panos Papasoglu (\cref{thmZSplit});
this is the key to our proof of \cref{corQIInv}.
When~$\G$ is also the fundamental group of a 3-manifold, then the conjecture is a theorem of Kapovich--Leeb \cite[Thm.\,1.1]{KL97}.

Using~\cref{lemIntersect} and a proposition of Gilbert Levitt (\cref{propPolySubgp}), we give an algebraic characterisation of the polynomial part of a free-by-cyclic group.
The characterisation also follows from \cref{lemUndistorted} and relative hyperbolicity.

\begin{restate}{propPolySubgp2}
There is a unique (up to conjugacy) finite set~$\mc P(\G)$ 
of pairwise nonconjugate maximal polynomial free-by-cyclic subgroups of~$\G$
\end{restate}

Bestvina--Feighn--Handel \cite[\S3]{BFHTits1} define the partially ordered finite set~$\Lambda^+(\phi)$ of attracting laminations for~$\phi$. 
Define the \underline{lamination depth} $\mf d(\phi)$ to be the size of the longest chains in~$\Lambda^+(\phi)$. So~$\phi$ is polynomially growing if and only if $\mf d(\phi) = 0$.

\begin{restate}{conjHeight}
If $\G, \G'$ are quasi-isometric, then $\mf d(\phi) = \mf d(\psi)$.
\end{restate}

It is open whether the lamination depth is even a group invariant!
This conjecture is closely related to our previous conjecture that being both fully irreducible and atoroidal was a geometric invariant -- see the discussion at the end of \cref{secRelHyp}.

\medskip
\noindent \textbf{Acknowledgements.}
I thank: Genevieve Walsh for organizing the workshop/conference and inviting me to give a minicourse;
and CRM-Montr\'eal for its hospitality.

\section{Polynomial growth}\label{secPolyGrowth}
Throughout,~$\free$ denotes a finitely generated nontrivial free group, $\Phi \colon \free \to \free$ an automorphism, and $\phi = [\Phi]$ its outer class.

A \underline{simplicial tree} is a simply connected 1-dimensional cell complex (equivalently, a contractible graph).
A \underline{cyclic splitting} of a group is a simplicial tree with a minimal nontrivial simplicial action of the group whose edge (setwise) stabilisers are cyclic;
it is a \underline{free splitting} if the edge stabilisers are trivial.
Note that we allow the group to act with inversion on edges.
A free splitting of~$\free$ is \underline{absolute} if vertex stabilisers are trivial -- this means the action is free, but we introduce ``absolute'' to avoid the phrase ``free free splittings'' later on.

Let~$T$ be an absolute free splitting of~$\free$.
For $x \in \free$, the \underline{$T$-length} $\|x\|_T$ is the minimal distance~$x$ translates a point in~$T$.
A conjugacy class $[x]$ \underline{grows exponentially} on (forward) $\phi$-iteration if
$\underset{n \to \infty}{\liminf}\, \|\Phi^n(x)\|_T^{1/n} > 1$;
it \underline{grows polynomially} on $\phi$-iteration if the sequence $(\|\Phi^n(x)\|_T)_{n=1}^\infty$ is bounded above by a polynomial in~$n$ -- the smallest degree of such a polynomial is the \underline{degree} for~$[x]$.
These properties (grows exp./poly. \& lim-inf/degree) are mutually exclusive and independent of the absolute free splitting~$T$.
For the interested reader, Levitt \cite[\S6]{Lev09} gives a finer classification of growth rates.
The outer automorphism~$\phi$ is \underline{exponentially growing} if some conjugacy class~$[x]$ grows exponentially on $\phi$-iteration;
it is \underline{polynomially growing} if every conjugacy class~$[x]$ grows polynomially on $\phi$-iteration.

A map $g \colon T \to T$ on a cyclic splitting of $\free$ is \underline{$\Phi$-equivariant} if $g(x \cdot p) = \Phi(x) \cdot g(p)$ for all $x \in \free, p \in T$.
A cyclic splitting~$T$ of~$\free$ is \underline{$\phi$-fixed} if it admits a $\Phi$-equivariant simplicial automorphism;
such a simplicial automorphism is unique when~$\free$ is not cyclic.
Marc Culler \cite[Thm.\,2.1]{Cul84} proved our first characterisation of a dynamical property of~$\phi$ in terms of absolute free splittings:

\begin{thm}\label{thmNRP}
$\free$ has a $\phi$-fixed absolute free splitting if and only if $\phi$ has finite order. \qed
\end{thm}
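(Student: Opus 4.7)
The plan is to prove the two directions separately; the forward direction is elementary, while the reverse direction is the substance of Culler's theorem.

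For the forward direction, suppose $\free$ admits a $\phi$-fixed absolute free splitting $T$ with a $\Phi$-equivariant simplicial automorphism $g \colon T \to T$. Since $\free$ is finitely generated and its action on $T$ is minimal, the quotient $\Gamma = T/\free$ is a finite graph, and $g$ descends to a simplicial automorphism $\bar g \colon \Gamma \to \Gamma$. The automorphism group of the finite graph $\Gamma$ is finite, so $\bar g^N = \operatorname{id}$ for some $N \ge 1$. Then $g^N$ is a lift of the identity through the covering $T \to \Gamma$; since $\free$ acts freely on $T$, $g^N$ must be left-multiplication by some $x_0 \in \free$. Now $g^N$ is $\Phi^N$-equivariant, so for every $y \in \free$ and $p \in T$,
\[ x_0 \cdot y \cdot p \;=\; g^N(y \cdot p) \;=\; \Phi^N(y) \cdot g^N(p) \;=\; \Phi^N(y) \cdot x_0 \cdot p. \]
Freeness of the action forces $\Phi^N(y) = x_0 y x_0^{-1}$ for every $y \in \free$, so $\Phi^N$ is inner and $\phi$ has finite order dividing $N$.

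For the reverse direction, suppose $\phi$ has finite order. The required content is Culler's realisation theorem for finite-order outer automorphisms of a free group: there is a finite graph $\Gamma$ with $\pi_1(\Gamma) \cong \free$ together with a finite-order simplicial automorphism $f \colon \Gamma \to \Gamma$ inducing $\phi$ on $\pi_1$. I would cite this rather than reprove it. After replacing $\Gamma$ by its core (which is preserved by $f$ and remains a $K(\free,1)$), lifting $f$ to the universal cover $T$ produces a simplicial automorphism $g \colon T \to T$ that is $\Phi$-equivariant for an appropriate representative $\Phi$ of $\phi$, exhibiting $T$ as a minimal nontrivial $\phi$-fixed absolute free splitting of $\free$.

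The main obstacle is precisely the reverse direction, since it \emph{is} Culler's realisation theorem. A proof from scratch would require either Culler's original combinatorial argument, or a modern approach via the contractibility of Culler--Vogtmann Outer Space combined with a fixed-point theorem applied to the action of the finite cyclic group $\langle \phi \rangle$; both routes are substantial and I would not expect to reproduce either within this section.
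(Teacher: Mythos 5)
Your argument is correct and matches the paper, which states the theorem with a \textup{\qedsymbol} and simply attributes it to Culler \cite[Thm.\,2.1]{Cul84}: the forward direction is the routine observation you spell out (a $\Phi$-equivariant automorphism of a free, minimal, cocompact tree action descends to a finite-order automorphism of the finite quotient graph, forcing a power of $\Phi$ to be inner), and the reverse direction is exactly Culler's realisation theorem, which both you and the paper cite rather than reprove.
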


By solving the Nielsen realisation problem for~$\out(\free)$, Culler actually proved a stronger theorem: $\mf G \le \out(\free)$ is finite if and only if~$\free$ has a {$\mf G$-fixed} absolute free splitting.
Bestvina--Handel \cite[\S1]{BH92} generalised \Cref{thmNRP} to polynomial growth using the next crucial observation:

\begin{thm}\label{thmFixedFreeSplitting}
If~$\free$ has no $\phi$-fixed free splitting, then~$\phi$ is exponentially growing.
\end{thm}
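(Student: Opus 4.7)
The plan is to prove the contrapositive: assume $\phi$ is polynomially growing, and produce a $\phi$-fixed free splitting of $\free$. My approach uses Bestvina--Handel's relative train track machinery. Fix a topological representative $f \colon G \to G$ of $\phi$ on a marked graph realising $\free$, together with an $f$-invariant filtration $\emptyset = G_0 \subsetneq G_1 \subsetneq \cdots \subsetneq G_r = G$ whose strata $H_i = \overline{G_i \setminus G_{i-1}}$ are each either exponentially growing (EG), non-exponentially growing (NEG), or zero. After passing to an improved representative if necessary, I may assume no zero strata occur.

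The next step is to rule out EG strata. An EG stratum has an irreducible transition matrix with Perron--Frobenius eigenvalue $\lambda > 1$, and any edge of such a stratum represents a conjugacy class whose $T$-length grows like $\lambda^n$ under $\Phi$-iteration. Polynomial growth of $\phi$ therefore forces every $H_i$ to be NEG. By the structural description of NEG strata (after a further refinement), each one may be taken to consist of a single edge $e_i$ with $f(e_i) = e_i \cdot u_i$ for some path $u_i$ in $G_{i-1}$.

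To produce the splitting, I collapse $G_{r-1}$ within $G$ (one point per component). The quotient $\bar G$ is a graph with a single edge (a loop if $G_{r-1}$ is connected, otherwise an arc between two vertices), and $f$ descends to a simplicial self-map $\bar f$ of $\bar G$ that preserves the unique edge setwise, since $u_r$ collapses to a point. Passing to the universal cover gives $\free$ acting minimally on a simplicial tree $T$ with trivial edge stabilisers, and the lift of $\bar f$ is a $\Phi$-equivariant simplicial automorphism, exhibiting $T$ as a $\phi$-fixed free splitting. Nontriviality uses that $\free$ has rank at least $2$, which forces $r \ge 2$ so that $G_{r-1}$ is nonempty; in the rank-$1$ case any automorphism of $\Z$ has finite order and Culler's \cref{thmNRP} already produces a $\phi$-fixed absolute free splitting.

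The main obstacle is the implication driving Step 2: ``an EG stratum produces an exponentially growing conjugacy class.'' This is a nontrivial but standard consequence of the train-track property and Perron--Frobenius theory, and it is the only place the dynamics genuinely enter. Once we are in the all-NEG regime, the collapse is essentially bookkeeping, and $\Phi$-equivariance of the lifted $\bar f$ follows directly from equivariance of $f$ on the universal cover of $G$.
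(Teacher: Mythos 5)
Your proof is correct in outline but takes a genuinely different route from the paper. The paper adapts Bestvina--Handel's irreducible train track algorithm directly to the setting of free splittings with $\phi$-invariant vertex stabilisers: either the algorithm produces an expanding train track map (and $\phi$ grows exponentially) or a simplicial automorphism (and the splitting is $\phi$-fixed). You instead take an ordinary relative train track representative $f\colon G \to G$, rule out exponentially growing strata using the hypothesis, and then collapse the penultimate filtration level $G_{r-1}$ to exhibit the $\phi$-fixed free splitting as a Bass--Serre tree. Both ultimately lean on Bestvina--Handel, but yours uses the filtration structure while the paper uses the irreducible/reducible dichotomy and the expanding-or-automorphism trichotomy on a generalised notion of train track.

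Two small corrections to the write-up. First, the contrapositive of the statement is ``if $\phi$ is \emph{not exponentially growing}, then $\free$ has a $\phi$-fixed free splitting,'' not ``if $\phi$ is polynomially growing''; at this point in the paper the dichotomy between the two has not been established (indeed \cref{propDichotomy} uses the present theorem to prove it), so assuming polynomial growth would prove a strictly weaker statement. Fortunately your argument only ever uses the absence of exponential growth to kill EG strata, so this is a wording slip rather than a circularity. Second, the claim that each NEG stratum may be taken to be a single edge $e_i$ with $f(e_i) = e_i \cdot u_i$ is not available without passing to a power of $\phi$, since $f$ in general cyclically permutes the edges of an NEG stratum; passing to a power would only yield a $\phi^k$-fixed splitting. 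But you do not need this refinement: just collapse $G_{r-1}$ with the top NEG stratum $H_r$ left intact, so that $\bar G$ has $|H_r|$ edges, $\bar f$ permutes them (the transition matrix of an NEG stratum with no zero rows is a permutation matrix), and the lifted map on the Bass--Serre tree is still a $\Phi$-equivariant simplicial automorphism. Finally, the phrase ``any edge of such a stratum represents a conjugacy class'' is imprecise -- an edge is not a loop -- but the fact you want (an EG stratum forces some conjugacy class to grow exponentially, via the associated attracting lamination or a direct Perron--Frobenius argument) is indeed standard, as you note.
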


\begin{proof}[Sketch]
Bestvina--Handel \cite[Thm.\,1.7]{BH92} prove that an \emph{irreducible} outer automorphism of~$\free$ is represented by an irreducible \emph{train track map};
the map is either \emph{expanding} or a homeomorphism.
They \cite[Rmk.\,1.8]{BH92} also prove that an outer automorphism is exponentially growing if it is represented by an expanding irreducible train track map.

While they worked with connected finite graphs~$\Gamma$ with $\pi_1(G) \cong \free$, their argument works almost verbatim for free splittings of~$\free$ with $\phi$-invariant vertex stabilisers.
In particular, some free splitting of~$\free$ admits a $\Phi$-equivariant irreducible train track map;
the map is either expanding or a simplicial automorphism.
As $\free$ has no $\phi$-fixed free splitting, the train track map is expanding, and the outer automorphism~$\phi$ is exponentially growing.
\end{proof}

Let~$T$ be a cyclic splitting of~$\free$.
By Bass--Serre theory \cite[I-\S5.4]{Ser77}, there are finitely many orbits $\free \cdot v_i$ in~$T$ of vertices with nontrivial stabilisers. 
The conjugacy classes~$[\mb S_i]$ of nontrivial vertex stabilisers are represented by finitely generated proper subgroups of~$\free$;
the \underline{children} of~$T$ are the representatives~$\mb S_i$.
Any $\Phi$-equivariant simplicial automorphism $g\colon T \to T$ permutes the orbits $\free \cdot v_i$ of vertices with nontrivial stabilisers, the conjugacy classes $[\mb S_i]$ of the children are $\phi$-periodic, and the \underline{restrictions} $\phi_i \in \out(\mb S_i)$ are the outer automorphisms induced by the first-return maps $g_i \colon \free \cdot v_i \to \free \cdot v_i$ under~$g$.
The outer automorphism~$\phi$ is exponentially growing if some restriction~$\phi_i$ is exponentially growing.

Now assume~$T$ is a $\phi$-fixed free splitting.
So the children are proper free factors~$\free_i$.
Since they have smaller rank than~$\free$, we can induct on rank.
By inductively considering $\phi_i$-fixed free splittings of the children~$\free_i$, we get a \underline{$\phi$-fixed free hierarchy} for~$\free$: a {family tree} starting with~$\free$ such that each terminal {descendant}~$\free'$ has a $\psi$-fixed absolute free splitting or no $\psi$-fixed free splitting, where $\psi \in \out(\free')$ is the restriction of~$\phi$.
The \underline{depth} of a hierarchy is the length of the longest branches in the family tree. 
The \underline{free depth} $\delta(\phi)$ of~$\phi$ is the minimal depth of $\phi$-fixed free hierarchies for~$\free$;
for instance, $\delta(\phi) = 0$ means~$\free$ has a $\phi$-fixed absolute free splitting or no $\phi$-fixed free splitting.

Similarly, a \underline{$\phi$-fixed cyclic hierarchy} for~$\free$ is a family tree that starts with~$\free$, consists of descendants that are children of fixed cyclic splittings, and whose terminal descendants have fixed absolute free splittings or no fixed cyclic splittings.
A cyclic hierarchy for~$\free$ is \underline{complete} if its terminal descendants have fixed absolute free splittings.
The \underline{cyclic depth} $\delta_c(\phi)$ of~$\phi$ is the minimal depth of $\phi$-fixed cyclic hierarchies for~$\free$;
by definition, $\delta_c(\phi) \le \delta(\phi)$.

We use complete cyclic hierarchies for~$\free$ to prove the dichotomy of growth type for~$\phi$:

\begin{prop}\label{propDichotomy}
The following are equivalent:
\begin{enumerate}
\item every $\phi$-fixed free hierarchy for~$\free$ is complete;
\item some $\phi$-fixed cyclic hierarchy for~$\free$ is complete;
\item $\phi$ is polynomially growing; and
\item $\phi$ is not exponentially growing.
\end{enumerate}
\end{prop}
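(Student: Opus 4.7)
The plan is to prove the cycle of implications $(3) \Rightarrow (4) \Rightarrow (1) \Rightarrow (2) \Rightarrow (3)$. Two links are essentially by definition: $(3) \Rightarrow (4)$ is immediate from the mutual exclusivity of polynomial and exponential growth on a single conjugacy class, and $(1) \Rightarrow (2)$ follows because a $\phi$-fixed free hierarchy (which always exists by an inductive construction on $\rank(\free)$) is in particular a $\phi$-fixed cyclic hierarchy, with ``complete'' meaning the same thing in either setting.

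For $(4) \Rightarrow (1)$, I combine \cref{thmFixedFreeSplitting} with the observation, recorded in the text just before this proposition, that if some restriction $\phi_i$ on a $\phi$-fixed cyclic splitting is exponentially growing then so is $\phi$. Contrapositively, if $\phi$ is not exponentially growing, then no iterated restriction along any $\phi$-fixed free hierarchy is exponentially growing, so \cref{thmFixedFreeSplitting} gives a fixed free splitting at every terminal descendant; because the descendant is terminal, that splitting must be absolute, and the hierarchy is complete.

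For $(2) \Rightarrow (3)$, I induct on the depth of a given complete $\phi$-fixed cyclic hierarchy. At depth zero, \cref{thmNRP} forces $\phi$ to have finite order, so every conjugacy class grows boundedly, hence polynomially. For the inductive step, let $T$ be the top-level $\phi$-fixed cyclic splitting with children $\mb S_i$ and $\Phi$-equivariant simplicial automorphism $g$, and assume by induction that each restriction $\phi_i$ is polynomially growing. Fix a class $[x]$ and an absolute free splitting $S$ of $\free$. If $x$ is $T$-elliptic then $[x]$ is carried by some $\mb S_i$, and the polynomial bound in an absolute free splitting of $\mb S_i$ transfers to $S$ via the quasi-convex embedding of $\mb S_i$ into $\free$. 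If $x$ is $T$-hyperbolic, write a fundamental domain of its axis as a Bass--Serre normal form $a_0 e_1 a_1 \cdots e_k a_k$, with $a_j$ in vertex groups and $e_j$ edges. Because $g$ permutes the $\free$-orbits of edges of $T$, the corresponding normal form for $\Phi^n(x)$ still has exactly $k$ edge-letters, and its vertex-group letters are obtained by applying appropriate powers of the $\phi_i$ (up to conjugation). Summing the polynomial contribution of each iterated vertex-group letter with a bounded per-edge contribution from the finitely many edge-stabilizer generators yields the desired polynomial bound on $\|\Phi^n(x)\|_S$.

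The main obstacle is the $T$-hyperbolic sub-case of $(2) \Rightarrow (3)$: one needs the axis decomposition to interact cleanly with $g$-iteration so that $k$ is preserved and each vertex-group piece genuinely evolves under the corresponding restriction $\phi_i$, and then quasi-convexity of each $\mb S_i$ in $\free$ must be invoked to convert intrinsic polynomial bounds on the restrictions into polynomial bounds measured on the absolute free splitting $S$ of $\free$.
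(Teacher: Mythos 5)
Your proof of $(3)\Rightarrow(4)$, $(4)\Rightarrow(1)$, and $(1)\Rightarrow(2)$ is correct and matches the paper. The gap is in $(2)\Rightarrow(3)$: your account of how the Bass--Serre normal form of a $T$-hyperbolic element evolves under $\Phi$-iteration is wrong. You assert that the vertex-group letters of $\Phi^n(x)$ ``are obtained by applying appropriate powers of the $\phi_i$ (up to conjugation)'' together with a ``bounded per-edge contribution''; this bookkeeping would yield $\Deg(\phi)\le\max_i\Deg(\phi_i)$, but the correct bound is one more than that, and the step already fails for free splittings. Take $\free=\langle a,s\rangle$ with $\Phi(a)=a$ and $\Phi(s)=as$, and let $T$ be the Bass--Serre tree for the one-edge free splitting whose quotient has a single vertex with vertex group $\langle a\rangle$ and a single loop with trivial edge group; then $T$ is $\phi$-fixed with restriction $\phi_1=\mathrm{id}$ on the child $\langle a\rangle$, so $\Deg(\phi_1)=0$. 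The $T$-hyperbolic element $s$ has $\Phi^n(s)=a^n s$, whose vertex-group letter $a^n$ grows linearly and is not $\phi_1^n$ of anything fixed, and the trivial edge group provides no ``per-edge'' term to absorb this. So $\Deg(\phi)=1$, contradicting the degree-$0$ bound your accounting would give.

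What actually happens is that each vertex-group letter of $\Phi^n(x)$ is a \emph{product} of roughly $n$ terms, each a $\phi_i$-iterate of a fixed element determined by the edge letters that $g$ sweeps through; accumulating $n$ such terms of polynomial degree $d$ produces degree $d+1$. In the genuinely cyclic case there is a further linear contribution coming from the edge stabilisers: this is precisely the Dehn-twist phenomenon, and it is what the paper's blow-up two-complex $Z$ with cylinders is designed to capture. The implication $(2)\Rightarrow(3)$ is nevertheless true, since accumulating $O(n)$ polynomial terms is still polynomial, but your argument as written omits the essential mechanism and so does not establish it.
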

\begin{proof}[Sketch]
$1{\Rightarrow}2$:
$\free$ has at least one $\phi$-fixed free hierarchy.

$2{\Rightarrow}3$:
Suppose~$\free$ has a complete $\phi$-fixed cyclic hierarchy with depth~$d$, and let $g \colon T \to T$ be the initial $\Phi$-equivariant simplicial automorphism of a cyclic splitting that produced the hierarchy.
Without loss of generality, assume~$\free$ acts on~$T$ without inversion, and~$g$ transitively permutes the $\free$-orbits of edges in~$T$.
For the base case,~$T$ is absolute. 
So, the length of every edge-path in~$T$ is preserved on $g$-iteration.
In the quotient graph $\Gamma = \free \backslash T$, every edge-path grows polynomially on $\overline g$-iteration with degree $0 = d$, where $\overline g \colon \Gamma \to \Gamma$ is the simplicial automorphism induced by~$g$.

For induction, assume~$\free_i$ are the children of~$T$, $\Gamma_i$ are connected finite graphs with $\pi_1(\Gamma_i) \cong \free_i$, $\overline{g_i} \colon \Gamma_i \to \Gamma_i$ are cellular maps representing~$\phi_i$, and every edge-path in~$\Gamma_i$ grows polynomially \emph{(rel.~endpoints)} with degree $< d$ on $\overline{g_i}$-iteration.
We focus on the case~$T$ is not a free splitting -- the free splitting case is similar (and simpler).
Let~$Z$ be a ``blow up'' of~$\free \backslash T$ rel.~$\Gamma_i$:
for each vertex~$\overline{v_i}$ in~$\Gamma$, we have the graph~$\Gamma_i$;
and for each edge~$\overline{e_{ij}}$ in~$\Gamma$, we have a cylinder $\mb S^1 \times [0,1]$ whose boundary components are attached to $\Gamma_i$ and $\Gamma_j$ along loops representing the conjugacy classes in~$\free_i$ and~$\free_j$ corresponding to the stabiliser of the edge $e_{ij}$ in~$T$.
So,~$Z$ is a connected finite 2-complex with $\pi_1(Z) \cong \free$.
The self-maps~$g$ and~$\overline{g_i}$ induce a cellular map $h \colon Z \to Z$ that represents~$\phi$, and its restrictions to the cylinders are homotopic to \emph{Dehn twists};
therefore, every edge-path in the 1-skeleton~$Z^{(1)}$ grows polynomially with degree $\le d$ on $h$-iteration.
Consequently, if $\Gamma'$ is a graph with $\pi_1(\Gamma') \cong \free$ and $g' \colon \Gamma' \to \Gamma'$ is cellular map representing~$\phi$, then every edge-path in~$\Gamma'$ grows polynomially with degree $\le d$ on $g'$-iteration.
By induction, the conjugacy class~$[x]$ grows polynomially with degree $\le d$ on $\phi$-iteration for all $x \in \free$.

$3{\Rightarrow}4$: Polynomial functions are subexponential.

$4{\Rightarrow}1$: Suppose some $\phi$-fixed free hierarchy for~$\free$ is not complete, i.e.~some terminal descendant $\free' \le \free$ has no $\psi$-fixed free splitting, where $\psi \in \out(\free')$ is the restriction of~$\phi$.
So~$\psi$, and hence~$\phi$, is exponentially growing by \cref{thmFixedFreeSplitting}.
\end{proof}

Let~$\phi$ be polynomially growing.
The proof also shows that the degree for~$[x]$ is at most~$\delta_c(\phi)$ for all $x \in \free$;
define the \underline{degree} $\Deg(\phi)$ of~$\phi$ to be the maximum such degree.
A $\phi$-fixed cyclic splitting of~$\free$ is \underline{simplifying} if it determines restrictions~$\phi_i$ with $\Deg(\phi_i) < \Deg(\phi)$.
The next strengthening of \cref{thmFixedFreeSplitting} is due to Bestvina--Feighn--Handel \cite[\S4]{BFHTits2}:

\begin{thm}\label{thmFixedDegSplitting}
$\free$ has a simplifying $\phi$-fixed cyclic splitting when~$\phi$ is polynomially growing.
\end{thm}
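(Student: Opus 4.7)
The plan is to promote the inductive picture of \cref{propDichotomy}, where a cyclic splitting is assembled from a 2-complex of graphs glued along twisting cylinders, to a single $\phi$-fixed cyclic splitting that already drops the degree. The key input from \cite{BFHTits2} is that a unipotent polynomially growing (UPG) outer automorphism admits an \emph{upper-triangular} topological representative $f \colon \Gamma \to \Gamma$: there is an ordering $e_1, \ldots, e_n$ of the edges so that $f(e_i) = e_i \cdot u_i$, with $u_i$ a closed edge-path in the subgraph $\Gamma_{i-1}$ spanned by $e_1, \ldots, e_{i-1}$. The iteration identity $f^n(e_i) = e_i \cdot u_i \cdot f(u_i) \cdots f^{n-1}(u_i)$, combined with induction on the filtration, shows that the polynomial growth degree of $[e_i]$ is exactly one more than that of $[u_i]$, forces the conjugacy class $[u_i]$ to be $\phi$-fixed, and identifies a nonempty \emph{top stratum}: those $e_i$ for which $[u_i]$ realises the maximal degree $\Deg(\phi) - 1$.

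First I would reduce to the UPG case by replacing $\phi$ with a suitable power $\phi^k$; this preserves the degree, and a $\phi^k$-fixed cyclic splitting can be converted into a $\phi$-fixed one by taking the common refinement of its $\Phi$-translates, using that finite intersections of cyclic subgroups of a free group are cyclic. Next I would build $T$ as the Bass-Serre tree of the graph of groups obtained from $\Gamma$ by collapsing each top-stratum edge $e_i$ to a vertex and attaching a cylinder whose boundary loops represent $u_i$ and the trivial loop respectively -- the same Dehn-twist construction used in the induction step of \cref{propDichotomy}. The edge groups are the infinite cyclic groups generated by (conjugates of) the $u_i$, the vertex groups are the fundamental groups of the connected components of $\Gamma$ with the top stratum removed, and $f$ descends to a $\Phi^k$-equivariant simplicial automorphism, so $T$ is $\phi^k$-fixed and, after the averaging above, $\phi$-fixed.

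The simplifying property and the nontriviality of $T$ are the heart of the argument and the main obstacle. For simplification, the restrictions $\phi_j \in \out(\mb S_j)$ are represented by the first-return maps of $f$ on the below-top subgraph, which inherits an upper-triangular structure whose suffixes $u_i$ all have degree strictly less than $\Deg(\phi) - 1$; so $\Deg(\phi_j) < \Deg(\phi)$. For nontriviality, one must rule out the degenerate scenario in which every top $u_i$ is already conjugate into a single connected component below, causing $T$ to collapse; the upper-triangular normal form, together with an analysis of bounded cancellation in the concatenations $u_i \cdot f(u_i) \cdots f^{n-1}(u_i)$, is what prevents this and pins $\langle u_i\rangle$ to a genuine edge group. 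Carrying these verifications out carefully -- particularly the bounded-cancellation bookkeeping that identifies the fixed cyclic subgroups -- is exactly the substantive work of \cite[\S4]{BFHTits2}.
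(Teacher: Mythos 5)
Your proposal and the paper's proof both ultimately lean on the same source: the paper simply cites Culler for degree~$0$ and \cite[Lem.\,4.33, Lem.\,4.37]{BFHTits2} for degrees $\ge 2$ and $1$ respectively, and you yourself acknowledge that ``carrying these verifications out carefully\ldots is exactly the substantive work of \cite[\S4]{BFHTits2}.'' So the core of your argument is not really a different route -- it is a more verbose description of the same citation. The place where you add genuinely new content is also where a genuine gap appears.

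The problematic step is the reduction from $\phi$ to a UPG power $\phi^k$ followed by the claim that ``a $\phi^k$-fixed cyclic splitting can be converted into a $\phi$-fixed one by taking the common refinement of its $\Phi$-translates, using that finite intersections of cyclic subgroups of a free group are cyclic.'' Two cyclic splittings of $\free$ need not have a common refinement at all: a common refinement of $T$ and $\Phi T$ is a tree $S$ with $\free$-equivariant collapse maps $S \to T$ and $S \to \Phi T$, and such an $S$ exists only when $T$ and $\Phi T$ are \emph{compatible}. Compatibility is a real restriction -- crossing $\Z$-splittings of a free group routinely have no common refinement (this is precisely the phenomenon measured by rectangles in the Guirardel core, and the obstruction behind the need for JSJ theory). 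Your remark about intersections of cyclic subgroups being cyclic addresses only what the edge groups of the refinement \emph{would} be, not whether the refinement exists, so it does not close this gap. To make the $\phi^k \rightsquigarrow \phi$ step work one would instead have to argue that the splitting produced is canonical in a way that makes it automatically $\Phi$-invariant (for example, that it comes from a $\phi$-invariant free factor system attached to the unipotent filtration, or from a JSJ-type decomposition preserved by the normaliser), which is a different and subtler argument than the one you wrote down. As written, the reduction step fails, and without it your construction only produces a $\phi^k$-fixed splitting, which is not what \cref{thmFixedDegSplitting} asserts.

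Two smaller remarks. First, the degree~$0$ case should be handled separately by \cref{thmNRP} as the paper does; in your framework $\Deg(\phi)=0$ gives an empty top stratum and there is no cylinder to attach, so the collapse-and-cylinder construction does not apply. Second, the worry that the splitting ``collapses'' if every top $u_i$ lies in one lower component is not the real danger: as long as some top suffix $u_i$ is nontrivial (guaranteed by $\Deg(\phi)\ge 1$), the associated Dehn-twist splitting over $\langle u_i\rangle$ is a genuine cyclic splitting. The harder verification, which you rightly defer to \cite{BFHTits2}, is that the restrictions to vertex groups really drop the degree -- this is not automatic from the filtration alone when several top edges interact.
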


\begin{proof}[Sketch]
If $\Deg(\phi) = 0$, then~$\phi$ has finite order and,
by \cref{thmNRP}, $\free$ has a $\phi$-fixed absolute free splitting.
If $\Deg(\phi) = 1$, then the $\phi$-fixed cyclic splitting in \cite[Lem.\,4.37]{BFHTits2} determines finite order restrictions~$\phi_i$.
Finally, if $\Deg(\phi) \ge 2$, then the $\phi$-fixed free splitting in \cite[Lem.\,4.33]{BFHTits2} determines restrictions~$\phi_i$ with $\Deg(\phi_i) < \Deg(\phi)$.
\end{proof}



As a corollary, growth type and degree are invariant under inverses:

\begin{cor}\label{corPolyInverse}
Generally, $\delta(\phi^{-1}) = \delta(\phi)$ and $\delta_c(\phi^{-1}) = \delta_c(\phi)$.

If~$\phi$ is polynomially growing, then so is~$\phi^{-1}$, and $\Deg(\phi^{-1}) = \Deg(\phi) = \delta_c(\phi)$.
\end{cor}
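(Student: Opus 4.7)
The plan is to exploit the fact that $\phi$-fixed splittings and $\phi^{-1}$-fixed splittings are literally the same objects --- reducing the first two equalities to a tautology --- and then to show $\Deg(\phi) = \delta_c(\phi)$ by a direct induction on degree. First I would observe that if $g \colon T \to T$ is a $\Phi$-equivariant simplicial automorphism of a cyclic splitting, then $g^{-1}$ is a $\Phi^{-1}$-equivariant simplicial automorphism of $T$, and vice versa. Consequently, a cyclic (resp.\ free) splitting is $\phi$-fixed iff it is $\phi^{-1}$-fixed; in either case the children are the same subgroups of $\free$, and the corresponding restrictions are inverses in $\out(\free_i)$. Inducting on $\rank(\free)$, every $\phi$-fixed cyclic (resp.\ free) hierarchy for $\free$ is also a $\phi^{-1}$-fixed cyclic (resp.\ free) hierarchy of the same depth, and completeness is preserved under this identification. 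Taking minimal depths gives $\delta(\phi^{-1}) = \delta(\phi)$ and $\delta_c(\phi^{-1}) = \delta_c(\phi)$, and \cref{propDichotomy} upgrades this to: $\phi^{-1}$ is polynomially growing iff $\phi$ is.

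Assuming $\phi$ is polynomially growing, I next want $\Deg(\phi) = \delta_c(\phi)$. The inequality $\Deg(\phi) \le \delta_c(\phi)$ is already recorded after \cref{propDichotomy} (the $2{\Rightarrow}3$ argument shows that any complete $\phi$-fixed cyclic hierarchy of depth $d$ forces every conjugacy class to grow with degree at most $d$). For the reverse inequality $\delta_c(\phi) \le \Deg(\phi)$, I would induct on $\Deg(\phi)$: when $\Deg(\phi) = 0$, \cref{thmNRP} provides a $\phi$-fixed absolute free splitting, exhibiting a complete hierarchy of depth $0$; when $\Deg(\phi) \ge 1$, \cref{thmFixedDegSplitting} furnishes a simplifying $\phi$-fixed cyclic splitting whose restrictions $\phi_i$ satisfy $\Deg(\phi_i) < \Deg(\phi)$, and inductively each child admits a complete $\phi_i$-fixed cyclic hierarchy of depth at most $\Deg(\phi_i)$, which I attach under the root splitting to obtain a complete $\phi$-fixed cyclic hierarchy for $\free$ of depth at most $\Deg(\phi)$. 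Combining with the symmetry of the first step then gives $\Deg(\phi^{-1}) = \delta_c(\phi^{-1}) = \delta_c(\phi) = \Deg(\phi)$.

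The mathematical content is light; the main obstacle is bookkeeping. I need the degree induction in the last paragraph to terminate at the base provided by \cref{thmNRP}, which is guaranteed by ``simplifying'' being a strict decrease in degree (not rank) in \cref{thmFixedDegSplitting}. I also need the inductive rank argument in the symmetry paragraph to apply uniformly to cyclic and free hierarchies; the key is that restrictions always act on proper free subgroups of $\free$, so the recursion really is on a well-founded finite tree.
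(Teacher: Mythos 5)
Your proof takes essentially the same approach as the paper: the key observation that $g^{-1}$ is $\Phi^{-1}$-equivariant gives the symmetry of hierarchies, \Cref{propDichotomy} transfers polynomial growth, the inequality $\Deg(\phi) \le \delta_c(\phi)$ is recorded after \Cref{propDichotomy}, and the reverse inequality follows by inductively applying \Cref{thmFixedDegSplitting}. You have merely unpacked the paper's one-line ``by inductively invoking \Cref{thmFixedDegSplitting}'' into an explicit induction on degree with base case handled by \Cref{thmNRP}, which is harmless since the paper's sketch of \Cref{thmFixedDegSplitting} already covers the $\Deg(\phi)=0$ case via \Cref{thmNRP}.
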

\begin{proof}
Let~$T$ be a free splitting of~$\free$ and $g \colon T \to T$ a $\Phi$-equivariant simplicial automorphism.
Then the inverse $g^{-1} \colon T \to T$ is a $\Phi^{-1}$-equivariant simplicial automorphism;
thus~$T$ is also $\phi^{-1}$-fixed.
So $\phi$-fixed free hierarchies for~$\free$ are exactly the $\phi^{-1}$-fixed free hierarchies, and $\delta(\phi^{-1}) = \delta(\phi)$.
Similarly, $\phi$-fixed cyclic hierarchies for~$\free$ are exactly the $\phi^{-1}$-fixed cyclic hierarchies, and $\delta_c(\phi^{-1}) = \delta_c(\phi)$.

In particular,  if $\phi$ is polynomially growing, so is $\phi^{-1}$ by \cref{propDichotomy}($2{\Leftrightarrow}3$).
We already know $\Deg(\phi) \le \delta_c(\phi)$.
By inductively invoking \cref{thmFixedDegSplitting}, we get a $\phi$-fixed cyclic hierarchy for~$\free$ whose depth is at most~$\Deg(\phi)$.
So $\Deg(\phi) = \delta_c(\phi)$.
\end{proof}

When $\phi$ is exponentially growing, Levitt \cite[Prop.\,1.4]{Lev09} remarkably showed that the polynomially growing conjugacy classes are supported in a unique subgroup system:

\begin{prop}\label{propPolySubgp}
There is a finite set~$\mc P(\phi)$ of pairwise nonconjugate nontrivial subgroups of~$\free$ satisfying the following for any nontrivial $x \in \free$:
\begin{quote}
if~$x$ is conjugate to an element of a subgroup in~$\mc P(\phi)$, then~$[x]$ grows polynomially on $\phi$-iteration;
otherwise,~$[x]$ grows exponentially on $\phi$-iteration.
\end{quote}

Each $\mb P \in \mc P(\phi)$ is finitely generated. 
{Essentially distinct} conjugates of $\mb P_1, \mb P_2 \in \mc P(\phi)$ have trivial intersection.
Any nontrivial subgroup of~$\free$ consisting entirely of elements whose conjugacy classes in~$\free$ grow polynomially on $\phi$-iteration is conjugate to a subgroup of some $\mb P \in \mc P(\phi)$;
therefore, the set~$\mc P(\phi)$ is unique (up to conjugacy).
\end{prop}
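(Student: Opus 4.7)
The plan is to construct $\mc P(\phi)$ using a relative train track representative of $\phi$, reading off the polynomial subgroups from the non-exponential part of the filtration. The polynomially growing case is tautological (take $\mc P(\phi) = \{\free\}$), so I would focus on the exponentially growing case.

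Using Bestvina--Handel (as refined by Bestvina--Feighn--Handel to completely split relative train tracks), I would represent a sufficiently divisible positive power $\phi^k$ by a relative train track map $f \colon \Gamma \to \Gamma$: a connected finite graph $\Gamma$ with $\pi_1(\Gamma) \cong \free$ equipped with an $f$-invariant filtration $\emptyset = \Gamma_0 \subset \Gamma_1 \subset \cdots \subset \Gamma_N = \Gamma$ whose successive strata $H_i = \overline{\Gamma_i \setminus \Gamma_{i-1}}$ are classified as exponentially growing (EG), non-exponentially growing (NEG), or zero. Let $\Gamma_\ast \subset \Gamma$ be the union of all non-EG strata; then $f$ restricts to a relative train track map on $\Gamma_\ast$ all of whose strata are polynomially growing. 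Take $\mc P(\phi)$ to be the set of conjugacy representatives in $\free$ of the fundamental groups of the noncontractible components of $\Gamma_\ast$; that this coincides with $\mc P(\phi^k)$ is a routine check using that polynomial growth is power-stable.

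The growth dichotomy then follows from the defining properties of a relative train track. A conjugacy class carried by $\Gamma_\ast$ grows polynomially by the NEG/zero structure of the strata there. Conversely, any conjugacy class whose cyclically tightened representative in $\Gamma$ legally crosses an EG stratum $H_i$ is exponentially growing: bounded cancellation under iterates of $f$ preserves a definite fraction of the EG edge crossings, and the transition matrix of $H_i$ has Perron--Frobenius eigenvalue $\lambda_i > 1$. Each $\mb P \in \mc P(\phi)$ is finitely generated as $\pi_1$ of a finite graph. For uniqueness, if $\mb Q \le \free$ consists of nontrivial polynomial classes, every tightened loop in $\Gamma$ representing a conjugacy class in $\mb Q$ stays in $\Gamma_\ast$; lifting to the universal cover $\tilde \Gamma$ shows $\mb Q$ is carried by a single elevation of one component of $\Gamma_\ast$, hence conjugate into a unique $\mb P \in \mc P(\phi)$. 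The essentially-distinct-conjugates property follows because distinct elevations of a component of $\Gamma_\ast$ in $\tilde \Gamma$ are disjoint.

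The main obstacle is the hard direction of the dichotomy: polynomial growth of a conjugacy class must force its tightened representative in $\Gamma$ to avoid EG strata entirely, not merely cross them transiently with cancellation in later iterates. Establishing this requires the \emph{no cancellation across EG crossings} lemmas and the structural control on the iteration of EG edges available in improved relative train tracks or CTs, and constitutes the real technical content behind Levitt's statement.
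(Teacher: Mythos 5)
Your approach via relative train tracks is genuinely different from the paper's proof, which constructs $\mc P(\phi)$ inductively from point stabilizers of an $\mb R$-tree carrying a $\Phi$-equivariant expanding homothety, using Gaboriau--Levitt index theory. Both routes are viable, and the RTT route is arguably more concrete.

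However, there is a genuine gap in your definition of $\Gamma_\ast$. You set $\Gamma_\ast$ to be ``the union of all non-EG strata,'' but this set is not $f$-invariant in general, so the subsequent claim that $f$ restricts to a relative train track map on $\Gamma_\ast$ is false. An NEG edge $e$ satisfies $f(e) = e \cdot u$ with $u$ a closed path in \emph{lower} strata, and nothing in the definition of a relative train track (even a CT) prevents $u$ from crossing EG edges; when it does, $|f^n(e)|$ grows exponentially although $e$ lies in your $\Gamma_\ast$. For a concrete failure, take $\Phi\colon a \mapsto ab,\ b \mapsto ba,\ c \mapsto cab$ on $\langle a,b,c\rangle$ realized on a rose with $\Gamma_1 = \{a,b\}$ an EG stratum and $H_2 = \{c\}$ an NEG stratum; your $\Gamma_\ast = \{c\}$ is not invariant, $[c]$ grows exponentially, and in fact $\mc P(\phi) = \emptyset$. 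The correct object is the maximal $f$-invariant subgraph containing no EG edges --- equivalently, reorder the filtration so that the polynomially growing strata sit at the bottom (possible precisely because that subgraph is invariant) and take the largest EG-free filtration element. With $\Gamma_\ast$ corrected, the remainder of your argument (exponential growth from legal EG crossings via bounded cancellation, uniqueness via carrying by a single elevation, essential distinctness via disjointness of elevations in $\widetilde\Gamma$) is sound, modulo the technical RTT/CT lemmas you flagged as the real content of the hard direction.
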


\noindent Conjugates $g_1 \mb P_1 g_1^{-1}, g_2 \mb P_2 g_2^{-1}$ are \underline{essentially distinct} if $\mb P_1 \neq \mb P_2$ or $g_2^{-1}g_1 \notin \mb P_1 = \mb P_2$.
The set~$\mc P(\phi)$ can be empty, and $\mc P(\phi) = \{ \free \}$ if and only if~$\phi$ is polynomially growing.

\begin{proof}[Sketch]
Suppose~$\phi$ is exponentially growing.
A sort of converse to \cref{thmFixedFreeSplitting} states that there is a minimal nontrivial isometric $\mb F$-action on an $\mb R$-tree~$T$ with trivial arc stabilisers, and the $\mb R$-tree $T$ admits a $\Phi$-equivariant expanding homothety.
By Gaboriau--Levitt's index theory \cite[Thm.\,III.2]{GL95}, nontrivial point stabilisers have smaller rank than $\free$ and are partitioned into finitely many conjugacy classes permuted by~$\phi$;
moreover, if $x \in \free$ does not fix a point in~$T$, then~$[x]$ grows exponentially on $\phi$-iteration.
As the $\free$-action on~$T$ has trivial arc stabilisers, nontrivial point stabilisers of distinct points will have trivial intersection.

Each child~$\mb S_i$ of~$T$ (i.e.~a nontrivial point stabiliser representing a conjugacy class) has the restriction $\phi_i \in \out(\mb S_i)$.
By inductively considering children with exponentially growing restrictions, we get a finite family tree whose terminal descendants~$\mb P \in \mc P(\phi)$ do not have exponentially growing restrictions.
So the restriction~$\phi_i$ to each $\mb P_i \in \mc P(\phi)$ is polynomially growing by \cref{propDichotomy}($4{\Rightarrow}3$), and the conjugacy classes in~$\mb P_i$ grow polynomially on $\phi_i$-iteration.
In particular, the conjugacy class $[x]$ (in~$\free$) grows polynomially on $\phi$-iteration if $x$ is conjugate to an element in $\mb P_i$.

For uniqueness of~$\mc P(\phi)$, suppose a nontrivial subgroup $\mb P' \le \free$ consists entirely of elements whose conjugacy classes in~$\free$ grow polynomially on $\phi$-iteration.
By construction of the family tree,~$\mb P'$ is conjugate to a subgroup of some unique terminal descendant. 
\end{proof}

As a corollary, we get the growth type dichotomy for conjugacy classes:~$[x]$ grows polynomially on $\phi$-iteration if and only if it does not grow exponentially on $\phi$-iteration.
By uniqueness, the outer automorphism~$\phi$ permutes the conjugacy classes of the subgroups in~$\mc P(\phi)$.
Each $\mb P_i \in \mc P(\phi)$ has the polynomially growing restriction $\phi_i \in \out(\mb P_i)$ induced by the first-return map under~$\phi$.
By \cref{corPolyInverse} and uniqueness, 
$\mc P(\phi^{-1}) = \mc P(\phi)$.

We end the section by showing that the growth type and degree are inherited by proper powers and restrictions to finite index subgroups.
Suppose $\free' \le \free$ is a finite index subgroup.
Then $\Psi = \Phi^k|\free'$ is an automorphism of~$\free'$ for some $k \ge 1$.
Let $\psi = [\Psi]$ be the outer automorphism in $\out(\free')$.

\begin{lem}\label{lemVirtual}
For $n \ge 1$, $\phi^n$ is polynomially growing if and only if~$\phi$ is polynomially growing.
The restriction $\psi$ is polynomially growing if and only if~$\phi$ is polynomially growing;
moreover, $\deg(\psi) = \deg(\phi) = \deg(\phi^n)$ for $n \ge 1$ when $\phi$ is polynomially growing.
\end{lem}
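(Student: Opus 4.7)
The plan is to reduce both halves to the behaviour of translation lengths on an absolute free splitting $T$, using three ingredients: the dichotomy of \cref{propDichotomy} together with the conjugacy-class dichotomy following \cref{propPolySubgp}; invariance of translation length under the choice of absolute free splitting; and multiplicativity of translation length on powers, $\|x^N\|_T = N\|x\|_T$.

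For the power statement, I would note that the sequence $(\|(\Phi^n)^m(x)\|_T)_m = (\|\Phi^{nm}(x)\|_T)_m$ is a subsequence of $(\|\Phi^m(x)\|_T)_m$. If $\phi$ is polynomially growing with $\Deg(\phi) = d$, then each such subsequence is $O((nm)^d) = O(m^d)$, giving $\phi^n$ polynomial with $\Deg(\phi^n) \le d$; equality follows from any $x$ of degree $d$ under $\phi$. Conversely, assume $\phi^n$ is polynomial; if $\phi$ were exponential, some $[x]$ would satisfy $\underset{m\to\infty}{\liminf}\,\|\Phi^m(x)\|_T^{1/m} > 1$, and raising the subsequence $\|\Phi^{n\ell}(x)\|_T^{1/(n\ell)}$ to the $n$-th power would give $\underset{\ell\to\infty}{\liminf}\,\|(\Phi^n)^\ell(x)\|_T^{1/\ell} > 1$, making $\phi^n$ exponential and contradicting \cref{propDichotomy}.

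For the restriction statement, I would first reduce to the case where $\free' \trianglelefteq \free$ is normal of finite index $N$ and $\Phi$-invariant, by replacing $\free'$ with
\[ \free'' = \bigcap_{i=0}^{k-1} \Phi^i\!\left(\bigcap_{g \in \free} g\, \free'\, g^{-1}\right), \]
a subgroup of $\free'$ of finite index in $\free$ that is normal in $\free$ and $\Phi$-invariant (hence $\Psi$-invariant, since $\Psi = \Phi^k|\free'$). Once the reduced case is established, applying it to $\free'' \trianglelefteq \free$ (with automorphism $\Phi$) and to $\free'' \trianglelefteq \free'$ (with automorphism $\Psi$), combined with the power statement applied to $\chi = [\Phi|\free'']$ (whose $k$-th power is $\psi|_{\free''}$), chains the equivalence of polynomial growth and the equality of degrees through the three groups.

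In the reduced case ($k=1$, $\Psi = \Phi|\free'$), for $y \in \free'$ one has $\|\Psi^m(y)\|_T = \|\Phi^m(y)\|_T$ on any absolute free splitting $T$ of $\free$, so the growth type and degree of $[y]$ agree under $\phi$ and $\psi$. For $x \in \free$, Lagrange gives $x^N \in \free'$ and multiplicativity gives $\|\Phi^m(x^N)\|_T = N \|\Phi^m(x)\|_T$, so the growth type and degree of $[x]$ under $\phi$ match those of $[x^N]$ under $\psi$. Combining with the dichotomy and taking maxima over $x$ then yields $\phi$ polynomial iff $\psi$ polynomial, with $\Deg(\phi) = \Deg(\psi)$. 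The main delicacy is the reduction step, where the power statement is used to bridge between $\chi$ and $\chi^k = \psi|_{\free''}$.
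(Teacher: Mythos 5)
Your proof is correct, and the core ingredients match the paper's: restricting an absolute free splitting $T$ of~$\free$ to~$\free'$ so that translation lengths agree, using $\|x^N\|_T = N\|x\|_T$ to compare $[x]_\free$ with $[x^N]_{\free'}$, and bridging through the power statement and \cref{propDichotomy}. The one structural difference is your reduction to a normal, $\Phi$-invariant subgroup $\free''$ via the normal core. That step is valid (your $\free''$ is indeed normal, finite index, $\Phi$-invariant, and contained in~$\free'$, since $\Phi^k$ preserves the normal core of~$\free'$), but it is not needed: the paper works directly with the possibly non-normal, non-$\Phi$-invariant $\free'$, comparing $\psi$ straight to $\phi^k$ on the restricted tree and observing merely that each $x\in\free$ has \emph{some} power $x^m\in\free'$ — finite index suffices, normality is irrelevant. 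So the paper chains $\phi\leftrightarrow\phi^k\leftrightarrow\psi$, while you chain $\phi\leftrightarrow\chi\leftrightarrow\chi^k\leftrightarrow\psi$ through $\free''$; your route is longer without buying anything extra. One small shared elision: in the power statement, the subsequence argument only gives $\Deg(\phi^n)\le\Deg(\phi)$, and deducing equality (that a class of degree $d$ under $\phi$ still has degree $d$, not less, under $\phi^n$) needs an extra observation — e.g.\ split the $\phi$-orbit of lengths into the $n$ residue subsequences, each of which is a $\phi^n$-orbit of a $\phi$-translate of $[x]$ and so is bounded by $m^{\Deg(\phi^n)}$. The paper is equally terse here (``immediate from the definition''), so this is not a gap in your argument relative to the source.
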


\noindent As a corollary, $\mc P(\phi^n) = \mc P(\phi)$ for all $n \neq 0$.

\begin{proof}
For $x \in \free$, if the conjugacy class~$[x]$ grows polynomially growing on $\phi$-iteration with degree~$d$, then it is immediate from the definition that~$[x]$ grows polynomially on $\phi^n$-iteration with degree~$d$ for $n \ge 1$.
So, for $n \ge 1$,~$\phi^n$ is polynomially growing with $\Deg(\phi^n) = \Deg(\phi)$ if~$\phi$ is polynomially growing.
Similarly, if~$[x]$ grows exponentially on $\phi$-iteration, then it also grows exponentially on $\phi^n$-iteration for $n \ge 1$;
thus~$\phi^n$ is exponentially growing if~$\phi$ is exponentially growing.

Pick an absolute free splitting~$T$ of~$\free$;
restricting this action to~$\free'$ gives us an absolute free splitting $T'$ of~$\free'$.
Then $\|x'\|_{T'} = \|x'\|_T$ for $x' \in \free'$, and the conjugacy class~$[x']_{\free'}$ (in~$\free'$) grows polynomially on $\psi$-iteration with degree~$d$ if and only if the conjugacy class~$[x']_\free$ (in~$\free$) grows polynomially on $\phi^k$-iteration with degree~$d$.
For this reason,~$\phi^k$ is exponentially growing if~$\psi$ is exponentially growing.
Conversely, if~$\psi$ is polynomially growing and $x \in \free$, then $x^m \in \free'$ for some $m\ge 1$, and the conjugacy class~$[x^m]_\free$ (and hence~$[x]_\free$) grows polynomially on $\phi^k$-iteration with the same degree as $[x^m]_{\free'}$ grows on~$\psi$-iteration;
therefore,~$\phi^k$ is polynomially growing, and $\Deg(\phi^k) = \Deg(\psi)$.
\end{proof}

\section{Group invariance}\label{secGrpInv}
Thoughout,~$\G$ denotes the free-by-cyclic group $\free \rtimes_\Phi \Z$. 
Our goal is to characterise the growth type of~$\phi$ as an algebraic property of~$\G$.


\begin{conv}$\G$ has the relative presentation $\langle \free, t~|~txt^{-1} = \Phi(x) \text{ for all } x \in \free \rangle$.\end{conv}

A cyclic splitting of~$\G$ is \underline{absolute} if its vertex stabilisers are cyclic.
A \underline{$\Z$-splitting} of a group is a cyclic splitting whose edge stabilisers are infinite;
we say a group ``splits over~$\Z$'' if it has a $\Z$-splitting.
The \underline{children} of a splitting of~$\G$ are the representatives of conjugacy classes of noncyclic vertex stabilisers.
A \underline{$\Z$-hierarchy} for~$\G$ is a family tree that starts with~$\G$, consists of descendants that are children of $\Z$-splittings, and terminates on descendants that have absolute $\Z$-splittings or no $\Z$-splittings.
The \underline{$\Z$-depth} $\delta_\Z(\G)$ of~$\G$ is the minimal depth of $\Z$-hierarchies for~$\G$.

A \underline{slender splitting} of a group is a simplicial tree with a minimal nontrivial simplicial action of the group whose edge stabilisers are slender.
A \underline{slender hierarchy} for~$\G$ is a family tree that starts with~$\G$, consists of descendants that are children of slender splittings, and terminates on descendants that have absolute cyclic splittings or no slender splittings.
A slender hierarchy for~$\G$ is \underline{complete} if its terminal descendants have absolute cyclic splittings.
The \underline{slender depth} $\delta_s(\G)$ of~$\G$ is the minimal depth of slender hierarchies for~$\G$.
As defined, it is not clear that $\Z$-depth and slender depth for~$\G$ are finite.

\begin{lem}\label{lemZFreeSplit}
If~$\free$ is not cyclic, then slender (resp. cyclic) splittings of~$\G$ are naturally in one-to-one correspondence with $\phi$-fixed cyclic (resp. free) splittings of~$\free$.
\end{lem}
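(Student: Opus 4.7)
The plan is to construct mutually inverse natural maps via \emph{extension} (from $\free$-splittings to $\G$-splittings) and \emph{restriction} (going back).

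For extension, given a $\phi$-fixed cyclic (resp.\ free) splitting $T$ of $\free$ with its unique $\Phi$-equivariant simplicial automorphism $g \colon T \to T$ (unique because $\free$ is noncyclic, as noted earlier in the paper), I would extend the $\free$-action on $T$ to a $\G$-action by letting $t$ act as $g$. The equivariance $g(x \cdot p) = \Phi(x) \cdot g(p)$ matches the defining relation $txt^{-1} = \Phi(x)$, making the action well-defined. Minimality and non-triviality are inherited: a $\G$-invariant subtree is automatically $\free$-invariant, so the minimal $\G$-invariant subtree contains the minimal $\free$-invariant subtree, which is all of $T$. Each $\G$-edge stabiliser sits in a short exact sequence whose kernel is the $\free$-edge stabiliser and whose quotient embeds in $\G/\free \cong \Z$, so it is slender (an extension of cyclic by cyclic) in the cyclic case and cyclic (embedding in $\Z$) in the free case.

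For restriction, given a slender (resp.\ cyclic) splitting $S$ of $\G$, I would pass to the $\free$-action on $S$. The crux is showing this $\free$-action is itself a minimal nontrivial splitting, which I would deduce from the normality of $\free$ in $\G$: the fixed-point set $S^{\free}$ and the minimal $\free$-invariant subtree are both $\G$-invariant, since $g \cdot S^{\free} = S^{g\free g^{-1}} = S^{\free}$ for every $g \in \G$, and the minimal $\free$-invariant subtree is canonical. Minimality of the $\G$-action then forces $S^{\free}$ to be empty and the minimal $\free$-subtree to be all of $S$; the case $S^{\free}=S$ is ruled out because a noncyclic free group is not slender (its commutator subgroup is infinitely generated), so $\free$ cannot sit inside every slender edge stabiliser. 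Edge stabilisers of the $\free$-action are $\G$-edge stabilisers intersected with $\free$, hence slender subgroups of $\free$, hence cyclic. Finally, $t \in \G$ acts on $S$ as a $\Phi$-equivariant simplicial automorphism via $t \cdot (x \cdot p) = \Phi(x) \cdot (t \cdot p)$, exhibiting $S$ as $\phi$-fixed.

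The main obstacle I anticipate concerns the cyclic-of-$\G$ versus free-of-$\free$ refinement: I need each cyclic $\G$-edge stabiliser to meet $\free$ trivially, not merely cyclically. Since $\G$ and $\Z$ are torsion-free, a cyclic $\G$-stabiliser either lies entirely in $\free$ or meets $\free$ trivially, and the first alternative must be ruled out. I would argue that an edge $e$ with $\G$-stabiliser $\langle x \rangle \subseteq \free$, $x \neq 1$, would have $t$-translates $t^n \cdot e$ whose stabilisers $\langle \Phi^n(x) \rangle$ lie in pairwise distinct $\free$-orbits, and this forces structural constraints incompatible with the cyclicity (as opposed to mere slenderness) of the $\G$-stabilisers or with minimality of the $\G$-action. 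Modulo this technicality, extension and restriction are mutually inverse by direct verification, yielding the claimed bijection.
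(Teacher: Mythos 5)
Your overall strategy (extend $\free$-splittings to $\G$-splittings via the $\Phi$-equivariant automorphism, and restrict $\G$-splittings to $\free$ using normality) is the same as the paper's, and most steps check out. The extension direction is clean and correct: the short-exact-sequence description of $\G$-edge stabilisers (cyclic-by-subgroup-of-$\Z$, hence slender, resp.\ trivial-by-subgroup-of-$\Z$, hence cyclic) does the job conceptually, and your inheritance of minimality and nontriviality from the $\free$-action is right. The restriction direction is also right up to the point you flag: using that $S^{\free}$ and the minimal $\free$-subtree are $\G$-invariant (by normality of $\free$), that $S^{\free}=S$ is impossible because a noncyclic free group is not slender, and that the $t$-action is automatically $\Phi$-equivariant.

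The gap is in the ``cyclic splitting of $\G$ $\Rightarrow$ free splitting of $\free$'' refinement, and the ``structural constraints'' you gesture at can in fact be pinned down. Your intermediate observation is correct: if $e$ has cyclic $\G$-stabiliser $\langle x\rangle \subseteq \free$ with $x\neq 1$, then $t^n\cdot e = x'\cdot e$ for $x'\in\free$ would force $x'^{-1}t^n \in \langle x\rangle \subseteq \free$, which maps to $n\neq 0$ under $\G\to\Z$, a contradiction; so the edges $t^n\cdot e$ for $n\in\Z$ lie in pairwise distinct $\free$-orbits. What you need to add is the standard fact that a finitely generated group acting minimally on a simplicial tree acts cocompactly (finitely many orbits of edges), which you have already set up since you established minimality of the $\free$-action on $S$. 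Infinitely many $\free$-orbits then gives the contradiction. The paper closes the same step from the opposite direction: it first notes that $\free$ acting minimally has finitely many edge orbits, so \emph{some} $t^n$ does return the edge to its $\free$-orbit, and then the $\G$-stabiliser must contain both the nontrivial $\free$-stabiliser $\langle y\rangle$ and an element projecting nontrivially to $\Z$, hence is noncyclic --- directly contradicting cyclicity of the $\G$-splitting. Either version works; the point is that you must actually invoke cocompactness of the minimal $\free$-action, which your write-up leaves implicit.
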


\noindent A variation of this appears in the proof of \cite[Cor.\,15]{KK00}.
The same variation is also proven in the second paragraph of \cite[\S1]{Bri02}.

\begin{proof}
Suppose~$T$ is a $\phi$-fixed cyclic splitting of~$\free$.
As~$\free$ is not cyclic, a unique simplicial automorphism $g \colon T \to T$ is $\Phi$-equivariant, which is precisely the property needed to extend the $\free$-action on~$T$ to a $\G$-action by $t \cdot p = g(p)$ for $p \in T$ -- this is the unique $\G$-action extending the $\free$-action on~$T$.
For any edge~$e$ of~$T$ with $\free$-stabiliser $\langle y \rangle$, an iterate of~$g$ fixes the orbit $\free \cdot e$, and the $\G$-stabiliser of~$e$ is $\langle y, xt^n\rangle$ for some $x\in \free$.
If~$y$ is not trivial, then $\langle y, xt^n \rangle \cong \langle y \rangle \rtimes \langle xt^n \rangle$ by $\Phi$-equivariance of~$g$;
otherwise, $\langle y, xt^n \rangle \cong \Z$.
So~$T$ is a slender splitting of~$\G$, and it is a $\Z$-splitting of~$\G$ if~$T$ was a free splitting of~$\free$.

Conversely, suppose~$T$ is a slender splitting of~$\G$.
As~$\free$ is not cyclic, $\free \trianglelefteq \G$ acts nontrivially and minimally on~$T$, and the action on~$T$ by~$t \in \G$ is $\Phi$-equivariant.
So~$T$ is a $\phi$-fixed slender (hence cyclic) splitting of~$\free$.
Now suppose~$T$ was a cyclic splitting of~$\G$.
Since~$\free$ is finitely generated,~$T$ has finitely many $\free$-orbits of edges.
For any edge~$e$ of~$T$ with a nontrivial $\free$-stabiliser $\langle y \rangle$, a power~$t^n$ fixes the orbit $\free \cdot e$, and the $\G$-stabiliser of~$e$ has the noncyclic subgroup $\langle y, xt^n\rangle$ for some $x \in \free$ and $n \ge 1$ -- yet~$T$ is a cyclic splitting of~$\G$!
So~$T$ is a $\phi$-fixed free splitting of~$\free$.
\end{proof}

Suppose~$\free$ is cyclic, i.e.~$\G \cong \Z \rtimes \Z$.
A noncyclic torsion-free group with a free splitting will contain a noncyclic free subgroup.
As~$\G$ is virtually abelian, it has no free splittings.
So any cyclic splitting of~$\G$ is a $\Z$-splitting.
Since~$\free$ is cyclic, it has a unique cyclic splitting, and this is an absolute free splitting;
however,~$\G$ has infinitely many (resp. exactly two) slender splittings if $\Phi \colon \free \to \free$ is the identity (resp. the involution), and they are all absolute $\Z$-splittings.
In this case,~$\G$ has a unique $\Z$-hierarchy, and this hierarchy is complete.


Now suppose $\free$ is not cyclic.
It follows from the proof of \cref{lemZFreeSplit} that any slender splitting of~$\G$ has infinite edge stabilisers, and its children are free-by-cyclic subgroups.
Thus, by \cref{lemZFreeSplit}, slender hierarchies (resp. $\Z$-hierarchies) for~$\G$ are naturally in one-to-one correspondence with $\phi$-fixed cyclic (resp. free) hierarchies for~$\free$.
In particular, every $\Z$-hierarchy for~$\G$ has finite depth, and a slender hierarchy for~$\G$ is complete exactly when it corresponds to a complete $\phi$-fixed cyclic hierarchy for~$\free$.
With the previous paragraph, \cref{propDichotomy} translates into the group invariance of growth type:

\begin{thm}\label{thmGroupInv}
The following are equivalent:
\begin{enumerate}
\item every $\Z$-hierarchy for~$\G$ is complete;
\item some slender hierarchy for~$\G$ is complete; and
\item $\phi$ is polynomially growing. \qed
\end{enumerate}
\end{thm}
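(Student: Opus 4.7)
The plan is to reduce the theorem to the free-group-side equivalence already proved in \cref{propDichotomy}, using \cref{lemZFreeSplit} to transport hierarchies across the two settings. Most of the work is already encoded in the paragraph immediately preceding the theorem, so the proof amounts to a bookkeeping argument.

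First I would dispose of the degenerate case $\free \cong \Z$. Then $\G \cong \Z \rtimes \Z$ and $\Phi$ is either the identity or the involution on $\Z$, so $\phi$ has finite order and is (vacuously) polynomially growing. The paragraph preceding the theorem verifies that the unique $\Z$-hierarchy for $\G$ is complete and that every slender hierarchy for $\G$ is complete, so all three conditions hold and the equivalence is automatic.

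Assume henceforth that $\free$ is not cyclic. Iterating \cref{lemZFreeSplit} yields a natural bijection between slender (resp. $\Z$-) hierarchies for $\G$ and $\phi$-fixed cyclic (resp. free) hierarchies for $\free$, and under this bijection completeness of a $\G$-hierarchy corresponds to completeness of the associated $\free$-hierarchy. Under this dictionary, condition~(1) of the theorem translates into \cref{propDichotomy}(1), condition~(2) into \cref{propDichotomy}(2), and condition~(3) is identical on both sides. Hence the theorem follows from \cref{propDichotomy}($1{\Leftrightarrow}2{\Leftrightarrow}3$).

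The main (mild) obstacle is verifying that the bijection really preserves completeness at terminal descendants. Specifically, an absolute cyclic splitting of a free-by-cyclic descendant $\G' = \free' \rtimes \Z$ corresponds, via \cref{lemZFreeSplit}, to a $\phi'$-fixed free splitting of $\free'$ whose $\free'$-vertex stabilisers are trivial or infinite cyclic; any nontrivial cyclic child $\free'_v \cong \Z$ then admits its own $\phi'$-fixed absolute free splitting (the real line with translation action) and contributes a further complete level to the $\free$-hierarchy. Conversely, a $\phi'$-fixed absolute free splitting of $\free'$ extends to a $\G'$-splitting whose vertex stabilisers embed into $\G'/\free' \cong \Z$ and are therefore cyclic. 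Once this completeness-preservation is checked, no further calculation is needed.
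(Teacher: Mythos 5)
Your proof is correct and follows the paper's approach exactly: transport the three conditions to \cref{propDichotomy} via the correspondence between slender (resp.\ $\Z$-) hierarchies for $\G$ and $\phi$-fixed cyclic (resp.\ free) hierarchies for $\free$ supplied by \cref{lemZFreeSplit}. Your completeness-preservation check is in fact slightly overcautious: in an absolute cyclic splitting of a noncyclic descendant $\G'$, the $\free'$-vertex stabilisers must already be \emph{trivial}, not merely cyclic -- a cyclic $\G'$-vertex stabiliser containing both a nontrivial $y\in\free'$ and some $xt^n$ with $n\neq 0$ would contain the noncyclic subgroup $\langle y, xt^n\rangle$, the same argument \cref{lemZFreeSplit} uses to rule out nontrivial $\free$-edge stabilisers -- so the ``further complete level'' of cyclic children never actually occurs, the correspondence preserves depth exactly, and this exactness is what the subsequent \cref{corGroupInv} relies on.
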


$\G$ is \underline{polynomial} if the conditions of the theorem hold.
The one-to-one correspondences of hierarchies imply $\delta_\Z(\G) = \delta(\phi)$ and $\delta_s(\G) = \delta_c(\phi)$.
By \cref{corPolyInverse}, we get an algebraic characterisation of the degree:

\begin{cor}\label{corGroupInv}
$\delta_s(\G) = \Deg(\phi)$ when~$\G$ is polynomial. \qed
\end{cor}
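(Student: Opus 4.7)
The statement is a direct consequence of results already assembled in the paper, so the proof is essentially a bookkeeping exercise; no new machinery is needed.

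The plan is to split into the two regimes the paper has already treated separately. First assume $\free$ is not cyclic. By \cref{lemZFreeSplit}, slender splittings of~$\G$ correspond bijectively and naturally to $\phi$-fixed cyclic splittings of~$\free$; moreover, the children of a slender splitting of~$\G$ match the children of the corresponding cyclic splitting of~$\free$ (under the same correspondence, inductively applied to the free-by-cyclic children). Iterating this correspondence yields a bijection between slender hierarchies for~$\G$ and $\phi$-fixed cyclic hierarchies for~$\free$, preserving depth and the property of being complete. Thus $\delta_s(\G) = \delta_c(\phi)$, which is exactly the statement noted just before the corollary. Then I would invoke \cref{corPolyInverse}, which asserts $\delta_c(\phi) = \Deg(\phi)$ whenever $\phi$ is polynomially growing (applicable here because~$\G$ is polynomial, so by \cref{thmGroupInv}~$\phi$ is polynomially growing). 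Chaining the two equalities gives $\delta_s(\G) = \Deg(\phi)$.

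For the degenerate case $\free \cong \Z$, so that $\G \cong \Z \rtimes \Z$, I would handle things by direct inspection, as the paper already records the relevant data: $\G$ admits an absolute $\Z$-splitting (with one or infinitely many choices depending on whether~$\Phi$ is the involution or the identity), so $\delta_s(\G) = 0$. On the other side, $\out(\Z) \cong \Z/2$, so $\phi$ has finite order and hence $\Deg(\phi) = 0$. The equality holds trivially.

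The only step requiring any attention is verifying that the correspondence of \cref{lemZFreeSplit} is compatible with the recursive passage to children, so that hierarchies match up and depths are preserved; but this follows immediately once one observes that the children of a slender splitting of~$\G$ are themselves free-by-cyclic subgroups of the form $\free_i \rtimes_{\Phi_i} \Z$, where $\free_i$ is a child of the corresponding $\phi$-fixed cyclic splitting of~$\free$ and~$\Phi_i$ lifts the restriction~$\phi_i$. So the main obstacle, such as it is, is purely notational rather than mathematical, and the corollary reduces to quoting the already-established equalities $\delta_s(\G) = \delta_c(\phi)$ and $\delta_c(\phi) = \Deg(\phi)$.
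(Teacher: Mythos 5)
Your proof is correct and matches the paper's argument: the paper deduces the corollary from the observation $\delta_s(\G) = \delta_c(\phi)$ (established via the hierarchy correspondence from \cref{lemZFreeSplit}) together with $\delta_c(\phi) = \Deg(\phi)$ from \cref{corPolyInverse}. Your explicit treatment of the degenerate case $\free \cong \Z$ is a welcome addition that the paper handles only implicitly in the surrounding discussion.
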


\noindent This characterisation of~$\Deg(\phi)$ may be known to experts but is not stated in the literature.

Suppose $\G' \le \G$ is a finite index subgroup.
Then $\free' = \G' \cap \free$ has finite index in~$\free$; 
in particular, it is finitely generated.
As~$\G'$ is not a subgroup of~$\free$, it is generated by~$\free'$ and~$xt^n$ for some $x \in \free$ and $n \ge 1$.
So $\G' = \free' \rtimes_\Psi \Z$ is a free-by-cyclic group, where $\Psi \colon \free' \to \free'$ be given by $x' \mapsto x\Phi^n(x')x^{-1}$.
An immediate consequence of \cref{lemVirtual} and group invariance of the degree (\cref{corGroupInv}) is {commensurability} invariance:

\begin{cor}\label{corVirtualPoly}
Let~$\G' \le \G$ be a finite index subgroup.
$\G'$ is polynomial if and only if~$\G$ is polynomial;
$\delta_s(\G') = \delta_s(\G)$ when~$\G$ is polynomial. \qed
\end{cor}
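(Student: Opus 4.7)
The plan is to compose two earlier results: \Cref{lemVirtual}, which controls growth type and degree under passage to a finite-index subgroup of $\free$ and a positive power, and \Cref{thmGroupInv}/\Cref{corGroupInv}, which translate these dynamical properties of $\phi$ into the algebraic invariants of $\G$. The setup in the paragraph immediately preceding the corollary does the bookkeeping: $\free' = \G' \cap \free$ has finite index in $\free$, $\G' = \free' \rtimes_\Psi \Z$ is generated by $\free'$ and some $xt^n$ with $x \in \free$ and $n \ge 1$, and $\Psi(x') = x \Phi^n(x') x^{-1}$. Passing to outer classes, $\psi := [\Psi] \in \out(\free')$ and $[\Phi^n|\free']$ differ only by the inner automorphism $\iota_x$, so they coincide.

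First I would apply \Cref{lemVirtual} in two stages: $\phi^n$ is polynomially growing iff $\phi$ is, with $\Deg(\phi^n) = \Deg(\phi)$ in that case; and the restriction $[\Phi^n|\free'] = \psi$ is polynomially growing iff $\phi^n$ is, again with equal degree. Combining these, $\psi$ is polynomially growing iff $\phi$ is, with $\Deg(\psi) = \Deg(\phi)$. \Cref{thmGroupInv} then identifies ``$\G$ polynomial'' with ``$\phi$ polynomially growing'' and ``$\G'$ polynomial'' with ``$\psi$ polynomially growing'', yielding the first statement. For the degree equality, \Cref{corGroupInv} gives $\delta_s(\G) = \Deg(\phi)$ and $\delta_s(\G') = \Deg(\psi)$, which are equal by the above.

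There is no real obstacle; the corollary is essentially a transcription of \Cref{lemVirtual} through the dictionary provided by \Cref{thmGroupInv} and \Cref{corGroupInv}. The one sanity check worth making explicit is that $\G' \not\subseteq \free$ (so that $\G'$ does have a nontrivial cyclic quotient and the relation $\G' = \free' \rtimes \langle xt^n\rangle$ makes sense): this is automatic because $[\G : \free] = \infty$ while $[\G : \G'] < \infty$, forcing $\G'$ to meet every coset of $\free$ in infinitely many classes.
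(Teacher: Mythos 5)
Your high-level strategy is the paper's intended one --- reduce to \Cref{lemVirtual} via the dictionary of \Cref{thmGroupInv} and \Cref{corGroupInv} --- but the bridging step ``$\psi := [\Psi]$ and $[\Phi^n|\free']$ differ only by the inner automorphism $\iota_x$, so they coincide'' is false as stated, in two ways. First, $\Phi^n|\free'$ need not be an automorphism of $\free'$ at all: from $\Psi(\free') = \free'$ one only gets $\Phi^n(\free') = x^{-1}\free' x$, and nothing forces $x \in N_\free(\free')$ (it is determined only up to left multiplication by $\free' \le N_\free(\free')$, so that ambiguity cannot rescue the failure). For example, with $\free \twoheadrightarrow S_3$, $\free'$ the index-3 preimage of a point stabiliser, $\Phi = \iota_a$ with $a \mapsto (123)$, and $n = 1$, one has $\Phi(\free') \neq \free'$, yet $\G' = \langle \free', (ya^{-1})t\rangle$ is a valid index-3 subgroup of $\G$ for any $y \in \free'$. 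Second, even when $\Phi^n(\free') = \free'$, the map $\iota_x|\free'$ is inner \emph{in} $\free'$ only if $x \in \free'$, because noncyclic subgroups of free groups have trivial centraliser; for $x \in N_\free(\free') \setminus \free'$ the two classes genuinely differ in $\out(\free')$ (take $\Phi = \mathrm{id}$, $\free' \trianglelefteq \free$ of index 2, $x \notin \free'$: then $[\Psi] = [\iota_x|\free']$ is a nontrivial involution while $[\Phi^n|\free'] = 1$).

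The corollary is still correct, and the repair is the mechanism already inside the proof of \Cref{lemVirtual}: fix an absolute free splitting $T$ of $\free$, let $T'$ be its restriction to $\free'$, and for $x' \in \free'$ write $(xt^n)^m = y_m t^{nm}$ with $y_m \in \free$; then $\|\Psi^m(x')\|_{T'} = \|\Psi^m(x')\|_T = \|y_m \Phi^{nm}(x') y_m^{-1}\|_T = \|\Phi^{nm}(x')\|_T$, since $\|\cdot\|_T$ is conjugation-invariant in $\free$ and not merely in $\free'$. So the growth of $[x']_{\free'}$ under $\psi$-iteration equals the growth of $[x']_\free$ under $\phi^n$-iteration, regardless of whether $\psi$ equals any $[\Phi^k|\free']$ in $\out(\free')$. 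With that substitution, your appeal to \Cref{lemVirtual} and the translation through \Cref{thmGroupInv} and \Cref{corGroupInv} goes through, and this is exactly the route the paper has in mind.
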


\noindent The identities $\delta_\Z(\G) = \delta_\Z(\G')$ and $\delta_s(\G') = \delta_s(\G)$ always hold -- the proof we have in mind invokes the algebraic torus theorem \cite{DS00} (see also \cref{conjSlender,corZHierarchy}).


We now turn the subgroup system~$\mb P(\phi)$ into a group invariant of~$\G$.
Each $\mb P_i \in \mc P(\phi)$ representing a $\phi$-orbit of conjugacy classes determines a polynomial free-by-cyclic subgroup $\G_i = \mb P_i \rtimes_{\Phi_i} \Z$ of~$\G$, where $\Phi_i \colon \mb P_i \to \mb P_i$ represents the polynomially growing restriction~$\phi_i$;
denote the set of subgroups~$\G_i$ by~$\mc P(\G)$.
As before, $\mc P(\G)$ can be empty, and $\mc P(\G) = \{ \G \}$ if and only if~$\G$ is polynomial.
Our goal is to give an algebraic characterisation of~$\mc P(\G)$.

First, here is a remarkable property of free-by-cyclic subgroups in~$\G$:

\begin{lem}\label{lemIntersect}
If $\G' \le \G$ is a free-by-cyclic subgroup, then $\G' \cap \free$ is finitely generated.
\end{lem}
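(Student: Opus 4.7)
The plan is to study the restriction $\chi := \chi_\G|_{\G'} \colon \G' \to \Z$ of the quotient map $\chi_\G \colon \G \to \G/\free \cong \Z$; its kernel is exactly $H := \G' \cap \free$, so the goal is to show $H$ is finitely generated.

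First I rule out $\chi \equiv 0$: otherwise $\G' \le \free$, so $\G'$ would be a free group, but the free-by-cyclic structure on $\G'$ then gives a finitely generated normal free subgroup of infinite index, contradicting the standard Nielsen--Schreier / Euler-characteristic fact that no finitely generated free group of rank $\ge 2$ admits such a subgroup (rank $\le 1$ being excluded by the noncyclic convention on $\free''$). Hence $\chi(\G') = n_0\Z$ for some $n_0 \ge 1$, and $\G' = H \rtimes \langle g\rangle$ for any $g \in \G'$ with $\chi(g) = n_0$.

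I then compare this with the intrinsic decomposition $\G' = \free'' \rtimes \langle s\rangle$ and its character $\chi_0 \in \operatorname{Hom}(\G', \Z)$ with kernel $\free''$. In the easy case that $\chi$ and $\chi_0$ are proportional --- equivalently, $\free'' \le \free$ --- the subgroups $H$ and $\free''$ are commensurable in $\G'$, so $H$ inherits finite generation from $\free''$.

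The main obstacle is the case $\chi_\G|_{\free''} \not\equiv 0$. Here $\chi_\G|_{\free''}$ is $\Psi_*$-invariant (as $\chi_\G \circ \Psi = \chi_\G$ on $\free''$, where $\Psi$ denotes conjugation by $s$), and its kernel $K \le \free''$ is a normal subgroup of infinite rank; in particular $K \le H$, although this does not by itself force $H$ to be of infinite rank, since a finitely generated free group may well contain subgroups of infinite rank. My plan here is to establish finite generation of $H$ by showing that the restricted character $\chi$ always lies in the Bieri--Neumann--Strebel invariant $\Sigma^1(\G') \cap -\Sigma^1(\G')$ of the free-by-cyclic group $\G'$, leveraging the embedding $\G' \hookrightarrow \G = \free \rtimes_\Phi \Z$ and the finite generation of the ambient $\free$. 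Concretely I hope to exhibit a finite generating set for $H$ from the action of $\G'$ on the Bass--Serre line of the HNN extension $\G = \free *_\Phi$, since the pointwise stabiliser of that line is the finitely generated $\free$; verifying this is the main technical step. For the weaker hypothesis (finitely generated noncyclic subgroup with zero Euler characteristic), Feighn--Handel coherence first upgrades $\G'$ to free-by-cyclic, and the same outline applies.
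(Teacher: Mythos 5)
Your setup is fine and your dispatch of the case $\chi \equiv 0$ and of the ``proportional'' case is correct (in fact in the proportional case $\ker\chi = \ker\chi_0 = \free''$ exactly, not merely up to commensurability). But the remaining case --- $\chi_\G|_{\free''} \not\equiv 0$ --- is where all the content of the lemma lives, and there you stop at a plan: you observe that the goal is equivalent to $[\chi], [-\chi] \in \Sigma^1(\G')$, and you ``hope to exhibit a finite generating set for $H$'' from the Bass--Serre line, calling this ``the main technical step.'' That step is precisely the lemma, and nothing you have written rules out the a priori possibility that $H = \ker(\chi)$ has infinite rank. Knowing the pointwise stabiliser of the Bass--Serre line of $\G$ is $\free$ only says $H \le \free$; it does not control the rank of $H$.

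The paper's proof closes this gap with a concrete mechanism you do not invoke. Writing $\G' = \langle \free', s\rangle$ with $s = xt^n$, Feighn--Handel's coherence argument produces an \emph{aspherical} relative presentation $\langle \free'', s \mid s a s^{-1} = x\Phi^n(a)x^{-1},\ a \in \mb A\rangle$ for $\G'$, where $\mb A$ is a free factor of the finitely generated $\free'' \le \free'$ and $x\Phi^n(\mb A)x^{-1} \le \free''$. Asphericity lets one compute $0 = \chi(\G') = \rank(\mb A) - \rank(\free'')$, forcing $\mb A = \free''$, so $\G'$ is an ascending HNN extension of $\free''$. Then one observes that if the inclusion $x\Phi^n(\free'')x^{-1} \le \free''$ were proper, the kernel of $\pi|\G'$ would be a strictly increasing union of copies of $\free''$ --- locally free but not free --- contradicting that $\ker(\pi|\G') = \G'\cap\free$ is a subgroup of the free group $\free$. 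Hence the inclusion is an equality and $\G' \cap \free = \free''$ is finitely generated. It is exactly this Euler-characteristic computation plus the ``locally free but not free'' obstruction that your proposal is missing; the BNS reformulation by itself does not supply it.
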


\noindent This lemma was the key idea in \cite[Thm.\,4.3]{Mut21}.
It is indispensable for turning algebraic statements about~$\G$ into dynamical statements about~$\phi$.
Despite its importance, it is not widely known.
A minor variation is stated in the paragraph following \cite[Thm.\,A]{HW10}.

\begin{proof}
Let $\pi \colon \G \to \Z$ be the homomorphism that maps $\free \mapsto 0$ and $t \mapsto 1$.
Then $\free' = \G' \cap \free$ is the kernel of the restriction $\pi|\G'$.
By our definition of free-by-cyclic groups, $\G'$ is a finitely generated noncyclic subgroup with Euler characteristic $\chi(\G') = 0$;
thus~$\G'$ is generated by~$\free'$ and $s = xt^n$ for some $x \in \free$ and $n \ge 1$.
In proving the coherence of $\G = \free \rtimes_\Phi \Z$, Feighn--Handel \cite[Prop.\,2.3]{FH99} show that~$\G'$ has the relative presentation
\[ \langle \mb \free'', s~|~sas^{-1} = x\Phi^n(a)x^{-1} \text{ for all } a \in \mb A \rangle,\]
where~$\mb A$ is a free factor of a finitely generated subgroup $\free'' \le \free'$ such that $x\Phi^n(\mb A)x^{-1} \le \free''$.
This presentation is \emph{aspherical} and allows us to compute: $0 = \chi(\G') = \rank(\mb A) - \rank(\free'')$;
therefore, $\mb A = \free''$.
Since $\pi|\G'$ maps $\free'' \mapsto 0$ and $s \mapsto n$ and its kernel~$\free'$ is free, we have $x\Phi^n(\free'')x^{-1} = \free''$ -- otherwise, if $x\Phi^n(\free'')x^{-1} \neq \free''$, then the kernel is (locally free but) not free.
In particular, $\free' = \ker(\pi|\G') = \free''$ is finitely generated.
\end{proof}

We are now ready to give the characterisation:
\begin{prop}\label{propPolySubgp2}
$\mc P(\G)$ is the unique (up to conjugacy) set of pairwise nonconjugate maximal polynomial free-by-cyclic subgroups of~$\G$.
\end{prop}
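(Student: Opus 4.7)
The plan is to deduce the statement from \cref{lemIntersect}---which recovers the free kernel $\free' = \G' \cap \free$ of any free-by-cyclic subgroup $\G' \le \G$ as a finitely generated subgroup of $\free$---together with the uniqueness clause of \cref{propPolySubgp}. Since each $\G_i = \mb P_i \rtimes_{\Phi_i} \Z$ is polynomial free-by-cyclic by construction, the content of the proposition is: (i) every polynomial free-by-cyclic subgroup $\G' \le \G$ lies in a conjugate of some $\G_i$; (ii) each $\G_i$ is maximal among such subgroups; and (iii) distinct $\G_i, \G_j \in \mc P(\G)$ are nonconjugate in $\G$. Uniqueness of the collection up to conjugacy is then a formal consequence.

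For (i), write $\G' = \langle \free', xt^n\rangle$ with $\Psi(x') := x\Phi^n(x')x^{-1}$ representing the polynomially growing $\psi \in \out(\free')$. A short computation gives $\Psi^m(x') = u_m\Phi^{mn}(x')u_m^{-1}$ for some $u_m \in \free$, so conjugation invariance of translation length on any absolute free splitting $T$ of $\free$ yields $\|\Psi^m(x')\|_T = \|\Phi^{mn}(x')\|_T$. Since polynomial growth of $\psi$ makes the left-hand side polynomial in $m$ (after comparing $\|\cdot\|_T$ restricted to $\free'$ with any absolute free splitting of $\free'$ via word length), the subsequence $\|\Phi^{mn}(x')\|_T$ is polynomial in $m$; the conjugacy-class dichotomy from \cref{propPolySubgp} upgrades this to polynomial growth of $[x']_\free$ under $\phi$ for every $x' \in \free'$. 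Applying \cref{propPolySubgp}, after conjugating $\G'$ by an element of $\free$ we may assume $\free' \le \mb P_i$ for some $\G_i \in \mc P(\G)$.

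It remains to show that the (possibly updated) generator $xt^n$ normalises $\mb P_i$. Since it normalises $\free'$, the nontrivial subgroup $\Phi^n(\free') = x^{-1}\free' x$ lies in both $\Phi^n(\mb P_i)$ and $x^{-1}\mb P_i x$; each of these is a conjugate in $\free$ of some element of $\mc P(\phi)$, and nontrivial intersection of such conjugates forces them to coincide by the essentially distinct clause of \cref{propPolySubgp}. Hence $x\Phi^n(\mb P_i)x^{-1} = \mb P_i$, and the same essentially distinct argument identifies $N_\G(\mb P_i)$ with $\G_i$; thus $\G' \le \G_i$.

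Items (ii) and (iii) follow by reapplying (i). If $\G_i \le \G''$ with $\G''$ polynomial free-by-cyclic, a conjugate of $\G''$ sits in some $\G_j$, so $g\mb P_ig^{-1}$ sits nontrivially in $\mb P_j$; essentially distinct then gives $g\mb P_ig^{-1} = \mb P_j$, forcing $[\mb P_i]$ and $[\mb P_j]$ into a single $\phi$-orbit, $j = i$ by the indexing of $\mc P(\G)$, and $g \in N_\G(\mb P_i) = \G_i$. This forces $\G'' = \G_i$ and simultaneously rules out nontrivial conjugacy between distinct $\G_i, \G_j$; uniqueness of $\mc P(\G)$ up to conjugacy is then a symmetric application of (i) together with maximality. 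The main obstacle throughout is the bookkeeping of essentially distinct conjugates so that \cref{propPolySubgp} can be invoked cleanly; everything else is a direct translation of dynamical data about $\phi$ into algebraic data about $\G$ via \cref{lemIntersect}.
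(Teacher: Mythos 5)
Your argument is correct and reaches the same conclusion, but it takes a recognisably different route on the uniqueness/maximality half. The containment step is the same as the paper's: both invoke \cref{lemIntersect} to recover $\free' = \G' \cap \free$ as a finitely generated subgroup, translate polynomial growth of $\psi$ into polynomial growth of $[x']_\free$ under $\phi$-iteration, and apply \cref{propPolySubgp} to place $\free'$ in a conjugate of some $\mb P_i$. Where you diverge is in establishing that the $\G_i$ are maximal and pairwise nonconjugate. The paper first \emph{reconstructs} $\mc P(\G)$ from scratch by extending the $\free$-action on the $\mb R$-tree (with trivial arc stabilisers and expanding homothety) to a homothetic $\G$-action; since an expanding homothety has at most one fixed point, arc stabilisers stay trivial for the $\G$-action, so the children are malnormal in $\G$, and transitivity of malnormality along the family tree gives malnormality of $\mc P(\G)$ in $\G$ once and for all. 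Maximality and uniqueness then fall out with no further bookkeeping. You instead stay entirely in $\free$, using only the ``essentially distinct'' clause of \cref{propPolySubgp} (malnormality of $\mc P(\phi)$ in $\free$) together with the fact that $\phi$ permutes the $[\mb P_i]$; from this you derive $N_\G(\mb P_i) = \G_i$ and chase conjugates by hand. Your version is more elementary in that it doesn't re-enter the $\mb R$-tree machinery, but it trades the paper's one clean structural fact (malnormality of $\mc P(\G)$ in $\G$) for several small case analyses on $g = yt^m$; each is correct as written, though the step identifying $N_\G(\mb P_i)$ with $\G_i$ deserves a sentence spelling out that the period of $[\mb P_i]$ under $\phi$ divides $m$ and that the leftover $\free$-part lands in $\mb P_i$ by malnormality. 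Both approaches buy the same theorem; the paper's is tidier if one already has the $\mb R$-tree picture at hand, while yours makes the dependence on \cref{propPolySubgp} fully explicit and self-contained.
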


\noindent 
As the first step of the proof, we show~$\mc P(\G)$ is \underline{malnormal} in~$\G$: essentially distinct conjugates of $\mb G_1, \mb G_2 \in \mc P(\G)$ have trivial intersection.
Another proof is given after \cref{thmRelHyp}.

\begin{proof}
We reconstruct~$\mc P(\G)$ along the same lines we constructed~$\mc P(\phi)$ in \cref{propPolySubgp}.
Suppose~$\G$ is not polynomial.
Then there is a minimal isometric $\free$-action on an $\mb R$-tree $T$ with trivial arc stabilisers, and the $\mb R$-tree $T$ admits a $\Phi$-equivariant expanding homothety.
As in the proof of \cref{lemZFreeSplit}, we can use the expanding $\Phi$-equivariant homothety to define the unique homothetic $\G$-action on~$T$ that extends the isometric $\free$-action.

Define a child $\mb H_i$ of $T$ (for the $\G$-action) to be a noncyclic point stabiliser representing a conjugacy class.
Such a child is a free-by-cyclic subgroup $\mb H_i = \mb S_i \rtimes_{\Phi_i} \Z$, where~$\mb S_i$ is child of $T$ (for the $\free$-action) representing a $\phi$-orbit of conjugacy classes in~$\free$.
So the collection~$\mc H$ of the children of~$T$ (for the $\G$-action) is finite too.
Since expanding homotheties have at most one fixed point, the $\G$-action on~$T$ still has trivial arc stabilisers.
In particular,~$\mc H$ is malnormal in~$\G$.
By inductively considering nonpolynomial children, we get a malnormal family tree whose collection of terminal descendants is~$\mc P(\G)$.
Malnormality is transitive; therefore, $\mc P(\G)$ is malnormal in~$\G$.

Let $\G' \le \G$ be a free-by-cyclic subgroup.
By \cref{lemIntersect}, $\free' = \G' \cap \free$ is finitely generated, and~$\G'$ is generated by~$\free'$ and $s \in \G$.
In particular, $\G' = \free' \rtimes_{\Psi} \Z$, where $\Psi \colon \free' \to \free'$ represents a restriction $\psi \in \out(\free')$ of $\phi \in \out(\free)$.
If~$\G'$ is polynomial, then the conjugacy class~$[x']_{\free'}$ (in~$\free'$) grows polynomially on $\psi$-iteration for all $x' \in \free'$.
This implies $[x']_\free$ grows polynomially on $\phi^n$-iteration, and hence, also on $\phi$-iteration.
By \cref{propPolySubgp}, $\free'$ is a subgroup of some  (conjugate in~$\free$ of) $\mb P_i \in \mc P(\phi)$.
Then~$s$ normalises~$\mb P_i$ by malnormality of $\mc P(\phi)$ in~$\free$, and $\G' = \langle \free', s \rangle$ is a subgroup of $\langle \mb P_i, s \rangle \le \G_i \in \mc P(\G)$.
So~$\mc P(\G)$ must be the collection of maximal polynomial free-by-cyclic subgroups of~$\G$ by malnormality in~$\G$.
\end{proof}

We end the section by introducing a canonical $\Z$-hierarchy that realises the $\Z$-depth.
Let $T, T'$ be $\Z$-splittings of~$\G$.
$T$ \underline{dominates}~$T'$ if every $T$-point stabiliser fixes a point in~$T'$.
Two $\Z$-splittings of~$\G$ are \underline{equivalent} if they dominate each other;
domination induces a partial order on the equivalence classes.
Note that equivalent $\Z$-splittings have the same children.
Analogous to {JSJ-decompositions for 3-manifolds}, Rips--Sela \cite[Thm.\,7.1]{RS97} defined a canonical equivalence class of~$\Z$-splittings:

\begin{thm}\label{thmRSJSJ}
Some $\Z$-splitting of~$\G$ dominates all $\Z$-splittings of~$\G$ if $\free$ is not cyclic. \qed
\end{thm}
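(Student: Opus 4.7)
The plan is to invoke Rips--Sela's JSJ theorem for $\Z$-splittings, applied directly to $\G$. If $\G$ admits no $\Z$-splitting, the statement is vacuous, so assume it does.

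The first step is to verify that $\G$ is one-ended, which is needed for the Rips--Sela machinery. Since $\G$ is torsion-free, one-endedness reduces to ruling out a nontrivial free-product decomposition. I would argue this using \cref{lemZFreeSplit}: any free splitting of $\G$ is in particular a cyclic splitting, and by \cref{lemZFreeSplit} it corresponds to a $\phi$-fixed free splitting of $\free$; but the construction in the forward direction of that lemma shows that the induced $\G$-action has infinite cyclic (hence nontrivial) edge stabilisers of the form $\langle xt^n \rangle$ with $n \ge 1$, contradicting the triviality of edge stabilisers in a free splitting of $\G$. Hence $\G$ has no free splittings, so $\G$ is one-ended.

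The second step is to invoke Rips--Sela's JSJ theorem for $\Z$-splittings of one-ended finitely presented groups: there exists a $\Z$-splitting $T_{\mathrm{JSJ}}$ whose vertex stabilisers are elliptic in every $\Z$-splitting of the group. Since $\G$ is also finitely presented, being an HNN extension of the finitely generated free group $\free$, the theorem applies, and $T_{\mathrm{JSJ}}$ dominates every $\Z$-splitting of $\G$ by definition.

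The main obstacle is the black-box appeal to the Rips--Sela JSJ theorem, which conceals substantial combinatorial content and is the actual depth of this result. A more self-contained alternative would be to translate via \cref{lemZFreeSplit} into the problem of finding a maximum $\phi$-fixed free splitting of $\free$, and to construct it by iterating refinements of bounded complexity (using Grushko's bound on the number of free factors); the subtlety there is that common refinements of free splittings of a free group need not themselves be free splittings, so a Stallings-folding-type argument would still be required, which essentially reproduces the JSJ content in the free-group setting.
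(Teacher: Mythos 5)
Your overall approach (verify that $\G$ is one-ended and finitely presented, then cite Rips--Sela) matches what the paper does implicitly, and your one-endedness argument via \cref{lemZFreeSplit} is correct: a free splitting of $\G$ would be a cyclic splitting, hence correspond to a $\phi$-fixed free splitting of $\free$, but the unique $\G$-action extending the $\free$-action has edge stabilisers $\langle xt^n\rangle$ with $n \ge 1$, which are infinite cyclic rather than trivial, a contradiction.

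However, your statement of the Rips--Sela JSJ theorem is strictly stronger than what Rips--Sela prove, and this is a genuine gap. Their cyclic JSJ decomposition for a one-ended finitely presented group has two kinds of vertices: \emph{rigid} ones, whose stabilisers are elliptic in every $\Z$-splitting, and \emph{quadratically hanging} (QH, or flexible) ones, whose stabilisers are fundamental groups of compact surfaces with boundary. A QH vertex group is certainly not elliptic in the $\Z$-splitting along an essential simple closed curve on its surface, so in the presence of QH vertices no $\Z$-splitting dominates all $\Z$-splittings. To close the gap for $\G$ you must rule out QH vertices. This does follow from \cref{lemZFreeSplit} and the paragraph after it: the noncyclic vertex stabilisers of any $\Z$-splitting of $\G$ are free-by-cyclic subgroups with finitely generated noncyclic free part, and such groups are never free (a noncyclic finitely generated free group has no finitely generated normal free subgroup with infinite cyclic quotient, by M.\ Hall / Greenberg), hence never fundamental groups of compact surfaces with nonempty boundary. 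With QH vertices excluded, the JSJ tree is universally elliptic and maximal for domination, so it dominates every $\Z$-splitting, as claimed.
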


If $\G \cong \Z \rtimes \Z$ (i.e.~$\free$ is cyclic), then~$\G$ has a unique $\Z$-hierarchy, which we call the \underline{canonical $\Z$-hierarchy} for~$\G$.
Otherwise, define the \underline{(canonical) $\Z$-children} of~$\G$ to be the children of the maximal equivalence class of $\Z$-splittings of~$\G$.
By inductively considering the canonical {$\Z$-descendants}, we get the \underline{canonical $\Z$-hierarchy} for~$\G$.

\begin{cor}\label{corCanonDepth}
The depth of the canonical $\Z$-hierarchy for~$\G$ is~$\delta_\Z(\G)$.
\end{cor}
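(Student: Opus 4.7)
The inequality that the canonical $\Z$-hierarchy has depth $\geq \delta_\Z(\G)$ is immediate from the definition of $\delta_\Z(\G)$ as the infimum. For the reverse direction, the plan is to induct on $d = \delta_\Z(\G)$. The base case $d = 0$ requires observing that if $\G$ has an absolute $\Z$-splitting $T'$, then any dominating $\Z$-splitting $T$ is also absolute: each noncyclic vertex stabiliser of $T$ would be elliptic in $T'$ and hence contained in a cyclic subgroup of $T'$, contradicting noncyclicity. If $\G$ has no $\Z$-splitting at all, the canonical hierarchy is trivial.

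For the inductive step, I would take a $\Z$-hierarchy $\mc H$ of $\G$ of depth $d \geq 1$, with root splitting $T'$ and children $\G'_1, \ldots, \G'_k$ satisfying $\delta_\Z(\G'_i) \leq d - 1$. Let $T$ be a representative of the dominating equivalence class (\cref{thmRSJSJ}) with canonical children $\mb H_1, \ldots, \mb H_m$. Domination forces each noncyclic $\mb H_j$ to lie in a conjugate of some child $\G'_{i(j)}$ of $T'$. The plan is to conclude that $\delta_\Z(\mb H_j) \leq d - 1$; the inductive hypothesis applied to $\mb H_j$ then gives a canonical hierarchy of $\mb H_j$ of depth $\leq d - 1$, so the canonical hierarchy of $\G$ has depth $\leq d$ as required.

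The main obstacle is a monotonicity statement I expect to need: if $\mb H \leq \G'$ are both free-by-cyclic, then $\delta_\Z(\mb H) \leq \delta_\Z(\G')$. To prove this, my plan is to transfer to the $\free$-side via \cref{lemZFreeSplit}, which gives $\delta_\Z(\G') = \delta(\phi')$ and $\delta_\Z(\mb H) = \delta(\psi)$. Writing $\G' = \free' \rtimes_{\Phi'} \Z$, \cref{lemIntersect} yields that $\free'' = \mb H \cap \free'$ is finitely generated, and $\Psi(y) = x \Phi'^n(y) x^{-1}$ for some $x \in \free'$ and $n \geq 1$. Since any $\Phi'$-equivariant simplicial automorphism is automatically $\Phi'^n$-equivariant, every $\phi'$-fixed free splitting of $\free'$ is $\phi'^n$-fixed, so $\delta(\phi'^n) \leq \delta(\phi')$; it therefore suffices to show $\delta(\psi) \leq \delta(\phi'^n)$.

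Given a $\phi'^n$-fixed free hierarchy of $\free'$ of depth $e$, the plan to produce a $\psi$-fixed free hierarchy of $\free''$ of depth $\leq e$ is to restrict layer by layer. At each splitting $T$ in the hierarchy, either $\free''$ is $T$-elliptic, so that it lies in a conjugate of a child of $T$ and we descend into the child's sub-hierarchy of depth $\leq e - 1$ without consuming a layer, or $\free''$ acts nontrivially, in which case the minimal $\free''$-invariant subtree of $T$ is a $\psi$-fixed free splitting of $\free''$ whose children are intersections of $\free''$ with conjugates of the children of $T$ (finitely generated by Howson's theorem). An induction on $e$ then yields a $\psi$-fixed free hierarchy of $\free''$ of depth $\leq e$, completing the proof of monotonicity and hence of the corollary.
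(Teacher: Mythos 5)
Your overall plan — induct on $\delta_\Z(\G)$, compare the Rips--Sela dominating splitting $T$ against a minimal-depth splitting $T'$, and push noncyclic vertex stabilisers of $T$ into children of $T'$ — is the same skeleton as the paper's. But you funnel the reduction through a blanket monotonicity claim ($\delta_\Z(\mb H)\le\delta_\Z(\G')$ for \emph{arbitrary} free-by-cyclic subgroups $\mb H\le\G'$), which is both stronger than what the statement needs and not supported by your argument. The paper instead sets up a more restricted induction hypothesis that applies only to $\Z$-children sitting inside children of $\Z$-splittings of bounded $\Z$-depth; this lets it run the argument on children of $T_{j_i}'$ directly and never invoke monotonicity for general free-by-cyclic subgroups.

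The concrete gap is the step ``every $\phi'$-fixed free splitting of $\free'$ is $\phi'^n$-fixed, so $\delta(\phi'^n)\le\delta(\phi')$.'' The implication does not follow. A $\phi'$-fixed free hierarchy is allowed to terminate at a descendant $\free_t$ precisely because $\free_t$ either has a fixed \emph{absolute} free splitting or has \emph{no} fixed free splitting under the restriction $\phi'_t$. Passing to $\phi'^n$ can only increase the supply of fixed free splittings, so a terminal descendant with no $\phi'_t$-fixed free splitting may very well acquire a (non-absolute) $(\phi'_t)^m$-fixed free splitting. At that point the re-interpreted family tree is no longer a valid $\phi'^n$-fixed hierarchy: its ``terminal'' descendant is forced to keep going, and the depth can go up, not down. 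Exactly the same issue recurs in your layer-by-layer restriction argument for $\delta(\psi)\le\delta(\phi'^n)$: at a terminal descendant $\free'_t$ with no fixed free splitting, the subgroup $\free''\cap(\text{conjugate of }\free'_t)$ can still have a $\psi_t$-fixed free splitting, so it cannot be declared terminal for $\psi$. The paper is aware that the finite-index version of your monotonicity is delicate: after \cref{corVirtualPoly} it remarks that proving $\delta_\Z(\G)=\delta_\Z(\G')$ in general requires the algebraic torus theorem, not just equivariance of the fixed splitting. So your elementary argument is genuinely missing an ingredient, and the route through a general monotonicity lemma is the wrong place to aim; you should instead follow the paper's double induction, which only ever compares the canonical $\Z$-children of $\G$ to children of a depth-minimal $\Z$-splitting of a child, keeping the groups on both sides of the induction hypothesis tied to the original hierarchy.
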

\begin{proof}
Let $\delta_\mathrm{can}(\G)$ be the depth of the canonical $\Z$-hierarchy for~$\G$.
We may assume~$\free$ is not cyclic and, by \cref{thmRSJSJ},  $\delta_\Z(\G) \ge 1$.
We need to show $\delta_\mathrm{can}(\G) \le \delta_\Z(\G)$.
Pick a $\Z$-splitting~$T'$ of~$\G$ that initiates a $\Z$-hierarchy for~$\G$ with minimal depth $\delta_\Z(\G)$.
Then children of~$T'$ have $\Z$-depth $< \delta_\Z(\G)$.
By domination (\cref{thmRSJSJ}), each $\Z$-child~$\mb S_i$ of~$\G$ is (conjugate to) a subgroup of some child~$\mb S_{j_i}'$ of~$T'$.
If we can show that $\delta_\mathrm{can}(\mb S_i) \le \delta_\Z(\mb S_{j_i}')$ for each $\Z$-child~$\mb S_i$, then $\delta_\mathrm{can}(\G) \le \delta_\Z(\G)$ since $\delta_\Z(\mb S_{j_i}') < \delta_\Z(\G)$.

We proceed by induction on the $\Z$-depth.
For the base case, let $\delta_\Z(\mb S_{j_i}') = 0$.
If~$\mb S_{j_i}'$ has an absolute $\Z$-splitting~$T_{j_i}'$, then~$\mb S_i$ acting on its minimal subtree in~$T_{j_i}'$ is an absolute $\Z$-splitting and $\delta_\mathrm{can}(\mb S_i) = 0$.
If~$\mb S_{j_i}'$ does not split over~$\Z$, then it must be a subgroup of a $\Z$-child of~$\G$;
in fact, $\mb S_{j_i}' = \mb S_i$ and $\delta_\mathrm{can}(\mb S_i) = 0$.

Now assume $\delta_\Z(\mb S_{j_i}') > 0$ and: if a $\Z$-child~$\mb S$ of a free-by-cyclic group~$\G'$ is a subgroup of a child~$\mb S'$ of a $\Z$-splitting of~$\G'$ with $\delta_\Z(\mb S') < \delta_\Z(\mb S_{j_i}') $, then $\delta_\mathrm{can}(\mb S) < \delta_\Z(\mb S_{j_i}')$.
Let~$T_{j_i}'$ be a $\Z$-splitting of~$\mb S_{j_i}'$ that initiates a $\Z$-hierarchy with minimal depth $\delta_\Z(\mb S_{j_i}')$.
Then the children of~$T_{j_i}'$ have $\Z$-depth $< \delta_\Z(\mb S_{j_i}')$.
If~$\mb S_i$ is a subgroup of a child of~$T_{j_i}'$, then $\delta_\mathrm{can}(\mb S_i) < \delta_\Z(\mb S_{j_i}')$ by the induction hypothesis.
Otherwise, the minimal subtree $T'' \subset T_{j_i}'$ of~$\mb S_i$ is a $\Z$-splitting.
The $\Z$-children of~$\mb S_i$ are subgroups of children of~$T''$ (\cref{thmRSJSJ}), which are in turn subgroups of children of~$T_{j_i}'$.
By the induction hypothesis again, the $\Z$-children of~$\mb S_i$ have canonical $\Z$-hierarchies with depth $< \delta_\Z(\mb S_{j_i}')$, and $\delta_\mathrm{can}(\mb S_i) \le \delta_\Z(\mb S_{j_i}')$.
\end{proof}

\section{Geometric invariance}\label{secGeomInv}
Throughout, $\free'$ denotes a finitely generated noncyclic free group, $\Psi \colon \free' \to \free'$ an automorphism, $\psi = [\Psi]$ its outer class, and~$\G'$ the free-by-cyclic group $\free' \rtimes_\Psi \Z$.
We also assume~$\G$ is not virtually abelian, i.e.~$\free$ is not cyclic.

The {Cayley graph} $\cay(\mb H, S)$ of a group~$\mb H$ with respect to a finite generating set $S \subset \mb H$ is a graph whose 0-skeleton is~$\mb H$ and 1-cells connect $g,gs \in \mb H$ for all $(g,s) \in \mb H \times S$.
A function $f\colon \Gamma \to \Gamma'$ on connected locally finite graphs is a \underline{quasi-isometric (q.i.) embedding} if there is a constant $K \ge 1$ such that
\[ \frac{1}{K}d(p_1, p_2) - K \le d'(f(p_1),f(p_2)) \le K d(p_1, p_2) + K \text{ for all } p_1, p_2 \in \Gamma, \]
where $d, d'$ are the combinatorial metrics on $\Gamma, \Gamma'$ respectively;
$f$ is a \underline{quasi-isometry} if, additionally,~$\Gamma'$ is the $K$-neighbourhood of the image~$f(\Gamma)$.
Quasi-isometries determine an equivalence relation on connected locally finite graphs.
For any two finite generating sets $S_1, S_2 \subset \mb H$, the identity map on~$\mb H$ extends to a quasi-isometry $\cay(\mb H, S_1) \to \cay(\mb H, S_2)$;
thus, $\cay(\mb H)$ is well-defined up to quasi-isometry.
A finitely generated subgroup $\mb H' \le \mb H$ is \underline{undistorted} if the inclusion extends to a q.i.-embedding $\cay(\mb H') \to \cay(\mb H)$;
for instance, if $\mb H' \le \mb H$ has finite index, then~$\mb H'$ acts (by left multiplication) freely and cocompactly on~$\cay(\mb H)$, and the inclusion $\mb H' \le \mb H$ extends to a quasi-isometry.

Here is our second surprising property of free-by-cyclic subgroups in~$\G$:

\begin{lem}\label{lemUndistorted}
Free-by-cyclic subgroups of~$\G$ are undistorted.
\end{lem}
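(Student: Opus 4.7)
The plan is to combine \cref{lemIntersect} with an explicit geometric model built from mapping tori. By \cref{lemIntersect}, $\free' := \G' \cap \free$ is finitely generated, so $\G'$ has the form $\free' \rtimes_\Psi \langle s \rangle$ with $s = xt^n$ for some $x \in \free$ and $n \ge 1$, and $\Psi(w) = x \Phi^n(w) x^{-1}$ for $w \in \free'$. Finitely generated subgroups of a free group are classically undistorted (via Stallings foldings), so $|w|_{\free'} \asymp |w|_\free$ for $w \in \free'$.

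Given $g \in \G'$, write the normal form $g = w s^k$ with $w \in \free'$ and $k \in \Z$, so $|g|_{\G'} \le |w|_{\free'} + |k|\cdot|s|_{\G'}$. The ``vertical'' component is controlled immediately: $\pi(g) = nk$ gives $|k| \le |g|_\G / n$, and hence $|w|_\G \le |g|_\G + |k||s|_\G \lesssim |g|_\G$. So the remaining task is to bound $|w|_{\free'}$ (equivalently $|w|_\free$) in terms of $|w|_\G$ uniformly in $w \in \free'$.

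To do this, I would realize everything combinatorially. Choose a finite graph $R$ with $\pi_1 R \cong \free$ and a cellular self-map $f \colon R \to R$ representing $\Phi$, and build analogous data $R'$ and $h \colon R' \to R'$ for $\Psi$. A Stallings folding yields an immersion $\iota \colon R' \to R$ realizing $\free' \hookrightarrow \free$, and the identity $\Psi(w) = x\Phi^n(w) x^{-1}$ gives a free homotopy between $\iota \circ h$ and $f^n \circ \iota$. Assembling these data into mapping tori $M_h, M_f$ produces a $\pi_1$-injective cellular map $\bar\iota \colon M_h \to M_f$; its lift to universal covers is a $\G'$-equivariant map $\tilde \iota \colon \wt{M_h} \to \wt{M_f}$, and the proof reduces to showing that $\tilde\iota$ is a quasi-isometric embedding.

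The main obstacle is this last step. The immersion property makes $\tilde\iota$ a local isometry in the combinatorial metric, but a geodesic in $\wt{M_f}$ between two points of $\tilde\iota(\wt{M_h})$ may globally stray from the image. I would bound the deviation by decomposing such a geodesic into fiber-tangent and $t$-directed segments: fiber segments can be rerouted inside the image at bounded cost (since $\free'$ is undistorted in $\free$), and every $n$ consecutive $t$-steps can be rewritten as one $s$-step combined with a bounded correction coming from the conjugating element $x$. Summing these local replacements yields the quasi-isometric embedding, and hence the undistortion of $\G' \le \G$.
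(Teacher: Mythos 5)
Your outline starts along the right lines---invoke \cref{lemIntersect} to get $\free' = \G'\cap\free$ finitely generated, and then construct a geometric model---but there are two genuine problems.

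First, the word-length reduction is wrong. You declare that the ``remaining task'' is to bound $|w|_{\free'}$, and you note (via Stallings) that $|w|_{\free'} \asymp |w|_\free$, so you are aiming for a bound $|w|_\free \lesssim |w|_\G$ for $w \in \free'$. But $\free$ is distorted in $\G$ whenever $\phi$ is exponentially growing (or polynomially growing of degree $\ge 2$): taking $\free' = \free$ and $w = \Phi^m(a)$ already gives $|w|_\G \le 2m + |a|_\free$ while $|w|_\free$ grows exponentially (or polynomially of degree $\ge 2$) in $m$. So no such bound exists, and the intermediate goal you set yourself is unattainable. The correct target is only $|w|_{\G'} \lesssim |w|_\G$, and there is no useful detour through $|w|_\free$; it has to be proved by an argument that compares the two word metrics directly (as your mapping-torus picture attempts, and as the paper's Lipschitz-retraction argument does).

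Second, and more fundamentally, the geometric step is missing its key ingredient. You want to reroute fibre segments of a $\wt{M_f}$-geodesic into the image $\tilde\iota(\wt{M_h})$ ``at bounded cost, since $\free'$ is undistorted in $\free$.'' But the rerouting at height $m$ is a nearest-point projection of $t^m\cdot\cay(\free)$ onto $t^m\cdot\cay(\free')$, and when a $t$-step is taken the fibres are re-identified via $\Phi$. Undistortedness of $\free'$ in $\free$ says nothing about the compatibility of these height-$m$ projections with the $\Phi$-twisting between adjacent heights; without that compatibility the ``bounded corrections'' you sum up could cascade and grow without bound along the $t$-direction, so you cannot conclude a q.i.\ embedding. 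What controls this discrepancy is bounded cancellation in the sense of Cooper: the closest-point projection $\cay(\free)\to\cay(\free')$ commutes with $\Phi$ up to a \emph{uniform} additive constant. This is precisely the technical heart of the paper's proof (where a Lipschitz retraction $r\colon\cay(\G)\to\cay(\G')$ is assembled from fibrewise projections and the uniform bound on $d(s(\Phi(x)),\Phi(s(x)))$ is supplied by bounded cancellation). The paper also first invokes Hall's theorem to arrange that $\free'$ is a free factor of (a finite index replacement of) $\free$, so that the projection $s$ is the genuine convex nearest-point projection between subtrees of the Cayley tree; your Stallings immersion $\iota\colon R'\to R$ does not by itself put you in that cleaner situation. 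Until bounded cancellation (or an equivalent uniformity statement) is identified and used, the rerouting argument does not close.
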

\begin{proof}
To prove the lemma, it suffices to show that the free-by-cyclic subgroup $\G' \le \G$ is undistorted in a finite index subgroup of~$\G$.
\Cref{lemIntersect} states that $\G' \cap \free$ is finitely generated, and we may assume $\free' = \G' \cap \free$.
In particular, $\psi \in \out(\free')$ is a restriction of an iterate of $\phi \in \out(\free)$.
After replacing~$\G$ with a finite index subgroup if necessary, we may assume~$\psi$ is a restriction of~$\phi$, and~$\G'$ is generated by~$\free'$ and~$t$ (i.e.~$\Psi = \Phi|\free'$).
By Hall's theorem,~$\free'$ is a free factor of a finite index subgroup of~$\free$ (see \cite[\S6]{Sta83});
we may replace~$\free$ with the intersection of the $\Phi$-iterates of the finite index subgroup and assume~$\free' \le \free$ is a free factor.
Pick a basis~$\mf B'$ of~$\free'$ and extend it to a basis~$\mf B$ of~$\free$.
We will show that the inclusion of $\cay(\G') = \cay(\G', \mf B'\cup\{t\})$ into $\cay(\G) = \cay(\G, \mf B\cup\{t\})$ is a q.i.-embedding by defining a map $r \colon \cay(\G) \to \cay(\G')$ that is a Lipschitz retract.

Note that $\cay(\free') = \cay(\free', \mf B')$ is a subtree of $\cay(\free) = \cay(\free, \mf B)$ and the closest point projection $s \colon \cay(\free) \to \cay(\free')$ is 1-Lipschitz.
Define~$r$ on the $t^n$-translates of $\cay(\free) \subset \cay(\G)$ by setting $r(t^n \cdot p) = t^n \cdot s(p)$ for all $p \in \cay(\free), n \in \Z$.
For each remaining 1-cell~$e$ of~$\cay(\G)$, let~$r$ map~$e$ linearly to an arbitrary shortest path in~$\cay(\G')$ connecting the $r$-images of its endpoints.
The restriction~$r|\cay(\G')$ is the identity map.

We now exhibit a uniform diameter for the $r$-image of any 1-cell in~$\cay(\free)$.
This is immediate for 1-cells in $t^n \cdot \cay(\free)$ for $n \in \Z$ since~$s$ is 1-Lipschitz.
Consider the 1-cell~$e$ connecting $t^{n-1}\Phi(x) = t^nxt^{-1}$ and~$t^nx$, where~$x \in \free$.
We need a uniform bound on the distance in~$\cay(\G')$ between $r(t^{n-1}\Phi(x)) = t^{n-1}s(\Phi(x))$ and $r(t^nx) = t^ns(x)$.
The latter is adjacent to $t^{n-1}\Phi(s(x))$, and it is enough to give a uniform bound on the distance in~$\cay(\free')$ between~$s(\Phi(x))$ and~$\Phi(s(x))$.
Set $y=\Phi^{-1}(s(\Phi(x))) \in \free'$ and consider the shortest path~$[x,y]$ in~$\cay(\free)$ between~$x,y$.
Then $s(x) \in [x,y]$ and $\Phi(y) = s(\Phi(x)) \in [\Phi(x), \Phi(s(x))]$ by definition of~$s$.
Bounded cancellation \cite[p.\,454]{Coo87} states that some uniform constant, independent of~$x \in \free$, bounds the distance in~$\cay(\free)$ between~$\Phi(s(x))$ and $[\Phi(x), s(\Phi(x))]$, which is also the distance in~$\cay(\free')$ between~$\Phi(s(x))$ and~$s(\Phi(x))$.
\end{proof}

By a very similar argument, the lemma extends to cyclic subgroups.

\begin{lem}\label{lemUndistorted2}
Slender subgroups of~$\G$ are undistorted.
\end{lem}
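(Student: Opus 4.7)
Slender subgroups of $\G$ are cyclic or isomorphic to $\Z \rtimes \Z$; the latter is free-by-cyclic and thus undistorted by \cref{lemUndistorted}. It therefore suffices to show that an infinite cyclic subgroup $\langle g \rangle \le \G$ is undistorted, and I would split into cases depending on whether $g$ lies in $\free$.

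If $g = xt^n$ with $x \in \free$ and $n \ne 0$, apply the quotient homomorphism $\pi \colon \G \to \Z$ that kills $\free$ and sends $t$ to $1$. Since $\pi$ is $1$-Lipschitz for the chosen generating sets, $\|g^k\|_\G \ge |\pi(g^k)| = |n| \cdot |k|$, which already witnesses the q.i.-embedding $\cay(\langle g \rangle) \hookrightarrow \cay(\G)$, so no bounded cancellation estimate is needed in this case.

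If $g \in \free$, I would adapt the retract construction of \cref{lemUndistorted}. Replacing $g$ by its primitive root in $\free$ (a finite index extension that preserves q.i.-type of the inclusion), and then using Hall's theorem together with intersecting $\Phi$-iterates, pass to a finite index subgroup of $\G$ in which $\langle g \rangle$ is a basis element of a $\Phi$-invariant free factor $\free_0 \le \free$. The free-by-cyclic subgroup $\G_0 = \free_0 \rtimes \Z$ is undistorted in $\G$ by \cref{lemUndistorted}, so it suffices to show $\langle g \rangle$ is undistorted in $\G_0$. Extending the $1$-Lipschitz closest-point projection $s \colon \cay(\free_0) \to \cay(\langle g \rangle)$ onto the $g$-axis along the $t$-direction (i.e.\ setting $r(yt^n) = s(y)$ on the $0$-skeleton and mapping remaining $1$-cells linearly to shortest paths) produces a candidate retract $r \colon \cay(\G_0) \to \cay(\langle g \rangle)$ restricting to the identity on $\cay(\langle g \rangle)$.

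The hard part will be verifying $r$ is Lipschitz on the HNN $1$-cells, since $\langle g \rangle$ is not $\Phi|\free_0$-invariant and so Cooper's bounded cancellation does not immediately bound $d_{\cay(\langle g \rangle)}(\Phi(s(x)), s(\Phi(x)))$ as it did when projecting onto a $\Phi$-invariant free factor in \cref{lemUndistorted}. The plan here is a two-step reduction: first apply bounded cancellation to the automorphism $\Phi|\free_0$ of $\free_0$ to control $d_{\cay(\free_0)}(\Phi(s(x)), s(\Phi(x)))$ uniformly in $x$, and then transfer that estimate through the $1$-Lipschitz closest-point projection $\cay(\free_0) \to \cay(\langle g \rangle)$. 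This is precisely the step where I expect the main technical care to be required, mirroring the bounded cancellation calculation at the end of the proof of \cref{lemUndistorted}; once it is in hand, the Lipschitz retract $r$ witnesses the desired q.i.-embedding.
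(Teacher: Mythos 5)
The case split (slender implies cyclic or $\Z \rtimes \Z$; then cyclic inside or outside $\free$) and the two easy cases match the paper's proof: $\Z \rtimes \Z$ is handled by \cref{lemUndistorted}, and $g = xt^n$ with $n \neq 0$ by the projection $\pi \colon \G \to \Z$ (the paper passes to a finite-index subgroup where $c=t$ is a section, but your direct length bound is equally valid). However, your plan in the main case $g \in \free$ has a genuine gap that the bounded-cancellation step does not repair.

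First, a smaller omission: the paper separates out the sub-case where $[g]$ is $\phi$-periodic, in which $\langle g \rangle$ embeds in a $\Z^2 \le \G$ (generated by $g$ and a $t$-power composed with a conjugator) and is undistorted because $\Z^2$ is, by \cref{lemUndistorted}. Your proposal does not treat this case, and your reduction cannot cover it, because for periodic but non-fixed $[g]$ the line $\alpha(g)$ is still not $\Phi|\free_0$-invariant.

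The central problem is the retract $r(t^n y) = s(y)$ onto the single, untwisted axis $\alpha(g) \cong \cay(\langle g \rangle)$. You correctly flag that $\langle g \rangle$ is not $\Phi|\free_0$-invariant, but your proposed fix—``apply bounded cancellation to $\Phi|\free_0$ to control $d_{\cay(\free_0)}(\Phi(s(x)), s(\Phi(x)))$ uniformly''—is not something bounded cancellation delivers. Cooper's bounded cancellation compares $\Phi(s(x))$ to the closest-point projection of $\Phi(x)$ onto $\Phi(\alpha(g)) = \alpha(\Phi(g))$, which is a different axis from $\alpha(g)$. Concretely, taking $x = g^k$ gives $\Phi(s(x)) = \Phi(g)^k$, which wanders off along $\alpha(\Phi(g))$, while $s(\Phi(g)^k)$ stays in the bounded projection of $\alpha(\Phi(g))$ onto $\alpha(g)$; so $d(\Phi(s(x)), s(\Phi(x))) \to \infty$ with $k$ and the retract is not Lipschitz. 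This is precisely why the paper does not project to one fixed axis but to the twisted family $C = \bigcup_{n\in\Z} t^n \cdot \alpha(\Phi^{-n}(c))$, whose $n$-th slice is $\Phi^{-n}$ of the axis, so that the slice-to-slice comparison under $\Phi$ is between projections onto two axes that bounded cancellation actually relates. After establishing that $C$ is undistorted, the paper still needs a second step—quasiconvexity of $\alpha(c)$ inside $C$, which uses that $[c]$ strictly grows under $\phi^{\pm1}$—before concluding that $\langle c \rangle$ itself is undistorted. Your plan has neither the twisted target nor this quasiconvexity step, and I don't see how to salvage the straight-product retraction without essentially rediscovering the paper's construction.

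(Secondary: the reduction ``$\langle g\rangle$ is a basis element of a $\Phi$-invariant free factor $\free_0$'' is also not achievable in general—intersecting $\Phi$-iterates of the Hall subgroup yields a $\Phi$-invariant finite-index subgroup, but this destroys primitivity of $g$, and a $\Phi$-invariant proper free factor containing $g$ as a basis element would force $[g]$ to be $\phi$-periodic, which is exactly the case you haven't treated. In any event this reduction is not needed: the paper projects directly onto the axis $\alpha(c)$ without making $c$ primitive.)
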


\noindent A variation of this appears in \cite[\S3]{Mit98}.

\begin{proof}
Any subgroup $\Z \rtimes \Z \le \G$ is undistorted by \cref{lemUndistorted}.
First, suppose $\langle c \rangle \le \G$ is not a subgroup of~$\free$. 
After replacing~$\G$ with the finite index subgroup generated by~$\free$ and~$c$, we may assume $c = t$.
Then $\langle c \rangle$ is a section of the homomorphism $\G \to \Z$ that maps $\free$ to~$0$ and~$t$ to~$1$;
thus $\langle c \rangle \le \G$ is undistorted.
Now suppose $\langle c \rangle \le \free$ is not trivial.
If the conjugacy class~$[c]$ (in~$\free$) is $\phi$-periodic, then $\langle c \rangle \le \Z^2 \le \G$.
By \cref{lemUndistorted} again, $\Z^2 \le \G$ is undistorted.
All subgroups of~$\Z^2$ are undistorted, and hence $\langle c \rangle \le \G$ is undistorted.

We may assume the conjugacy class~$[c]$ \emph{strictly grows} on $\phi^{\pm 1}$-iteration.
Fix a basis for~$\free$;
for nontrivial $x \in \free$, let $\alpha(x) \subset \cay(\free)$ denote the axis for~$x$ in the Cayley tree.
Consider $C = \bigcup_{n \in \Z} t^n \cdot \alpha(\Phi^{-n}(c))$ in $\cay(\G)$.
Make~$C$ connected by including, for each vertex $t^{n+1}x \in C$, the edge connecting it to~$t^n \Phi(x)$ and the shortest path in $t^n \cdot \cay(\free)$ from~$t^n \Phi(x)$ to~$C$;
by bounded cancellation, the length of the latter paths are uniformly bounded.
Let $s_n \colon \cay(\free) \to \alpha(\Phi^{-n}(c))$ be the closest point projections, and define $r \colon \cay(\G) \to C$ by setting $r(t^n \cdot p) = t^n \cdot s_n(p)$ for all $p \in \cay(\free), n \in \Z$ and extending linearly on the remaining edges.
As in the previous proof, $C$ is undistorted in~$\cay(\G)$ if there is a uniform bound on the distance in~$\cay(\free)$ between $s_n(\Phi(x))$ and $\Phi(s_{n+1}(x))$ for~$x \in\free$, which follows from bounded cancellation again.
Since~$[c]$ strictly grows on $\phi^{\pm 1}$-iteration,~$\alpha(c)$ is \emph{quasiconvex} in~$C$;
therefore, $\langle c \rangle \le \G$ is undistorted.
\end{proof}

A property of connected locally finite graphs is a \underline{geometric invariant} if it is preserved by quasi-isometries;
for instance, being \underline{one-ended} -- that is, the complement of any bounded set has exactly one unbounded component -- is a geometric invariant.
Our goal in this section is to relate growth type of~$\phi$ with a geometric invariant of $\cay(\G)$!

A connected subgraph $\ell \subset \cay(\G)$ is a \underline{quasi-geodesic} if there is a q.i.-embedding $q \colon \cay(\Z) \to \cay(\G)$ whose image is in~$\ell$ and has a {finite neighbourhood} (i.e.~the (closed) $M$-neighbourhood $N_M(\ell)$ for some $M \ge 0$) containing~$\ell$;
a component of $\cay(\G) \setminus \ell$ is \underline{essential} if its union with~$\ell$ is one-ended.
A quasi-geodesic~$\ell$ \underline{(strongly) separates}~$\cay(\G)$ if $\cay(\G) \setminus \ell$ has at least two essential components and 
a function $D\colon \mb R_{\ge 0} \to \mb R_{\ge 0}$ such that if~$E \ge 0$ and~$C$ is a component of $\cay(\G) \setminus N_E(\ell)$ that is not in $N_{D(E)}(\ell)$, then~$C$ is not in any finite neighbourhood of~$\ell$, and~$\ell$ is in $N_{D(E)}(C)$.
The existence of a separating quasi-geodesic is a geometric invariant:

\begin{lem}\label{lemQILines}
If $f \colon \cay(\G) \to \cay(\G')$ is a quasi-isometry and~$\ell$ a quasi-geodesic separating~$\cay(\G)$, then some finite neighbourhood of~$f(\ell)$ is a quasi-geodesic separating~$\cay(\G')$.
\end{lem}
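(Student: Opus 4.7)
The plan is to fix a quasi-inverse $g \colon \cay(\G') \to \cay(\G)$ of $f$ with a common q.i.-constant $K$ (so $d(gf(p), p), d(fg(p'), p') \le K$), and to set $\ell' := N_R(f(\ell))$ for some $R \ge K$. The choice $R \ge K$ forces $\ell'$ to be connected: the $f$-images of the endpoints of any edge of $\ell$ are within distance $2K$ in $\cay(\G')$, and any geodesic between them stays inside $\ell'$. Let $q \colon \cay(\Z) \to \cay(\G)$ be the q.i.-embedding whose image lies in $\ell$ with $\ell \subset N_M(q(\cay(\Z)))$. Then $f \circ q$ is a q.i.-embedding into $\ell'$ with $\ell' \subset N_{R + KM + K}(f \circ q(\cay(\Z)))$, and so $\ell'$ is a quasi-geodesic in $\cay(\G')$.

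Next I would verify the strong separation property for $\ell'$. Given $E \ge 0$ and a component $C$ of $\cay(\G') \setminus N_E(\ell')$ that escapes every finite neighbourhood of $\ell'$, transfer to $\cay(\G)$ via $g$. A direct estimate using $g(\ell') \subset N_{KR + 2K}(\ell)$ and the Lipschitz-up-to-error bound on $g$ produces an affine function $\alpha$ with $g(C) \subset \cay(\G) \setminus N_{\alpha(E)}(\ell)$. Adjacent vertices of $C$ have $g$-images within $2K$, so fattening by $2K$ yields a connected set $\widehat{C}$ contained in a single component $C^\ast$ of $\cay(\G) \setminus N_{\alpha(E) - 2K}(\ell)$ once $E$ is large enough. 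Since $g$ is a coarse bijection and $C$ escapes every finite neighbourhood of $\ell'$, the same is true for $C^\ast$ with respect to $\ell$. The strong separation property of $\ell$ then delivers $\ell \subset N_{D(\alpha(E) - 2K)}(C^\ast)$; applying $f$ and using $fg \sim \mathrm{id}$ converts this into $\ell' \subset N_{D'(E)}(C)$ for an explicit $D'$. Two essential components of $\cay(\G) \setminus \ell$ transfer to distinct essential components of $\cay(\G') \setminus \ell'$ by the same kind of argument: a path in $\cay(\G') \setminus \ell'$ joining their $f$-images would pull back via $g$ to a coarse connection through a bounded neighbourhood of $\ell$, contradicting essentiality; and one-endedness is a quasi-isometric invariant of the unions with $\ell$ and $\ell'$.

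The main obstacle is pure bookkeeping: tracking how the constants $K, R, M$ accumulate across several layers of neighbourhoods and justifying the fattening step — namely, that $g(C)$ genuinely lies in a single component of $\cay(\G) \setminus N_{\alpha(E) - 2K}(\ell)$. The subtlety is that $f$ and $g$ are not assumed continuous, so connectedness of images is not automatic; one handles this by uniformly bounded fattening, using the local finiteness of the Cayley graphs together with the uniform q.i.-bounds relating adjacent vertices to their images.
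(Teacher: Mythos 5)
Your outline gets the easy half right: taking $\ell'=N_R(f(\ell))$ with $R\ge K$ to force connectivity, and promoting $f\circ q$ to a q.i.-embedding onto $\ell'$, matches the paper and is fine. The gap is in the strong-separation bookkeeping, and it is not "pure bookkeeping" — it is a scaling problem that your route cannot patch as written.

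You pull back the \emph{whole} component $C$ of $\cay(\G')\setminus N_E(\ell')$ via the quasi-inverse $g$. The estimate you describe gives $g(C)\subset \cay(\G)\setminus N_{\alpha(E)}(\ell)$ with $\alpha(E)\approx E/K$, and hence $C^*$ is a component of $\cay(\G)\setminus N_{\alpha(E)-2K}(\ell)$ at depth only about $E/K$. Strong separation then yields $\ell\subset N_{D(\alpha(E)-2K)}(C^*)$, which is correct. But your final move — "applying $f$ and using $fg\sim\mathrm{id}$ converts this into $\ell'\subset N_{D'(E)}(C)$" — does not follow. To convert a point $c\in C^*$ near some $q\in\ell$ into a point of $C$ near $f(q)$, you would need $f(C^*)$ to land in the single component $C$ of $\cay(\G')\setminus N_E(\ell')$. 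Pushing $C^*$ forward loses another factor of $K$: points of $f(C^*)$ are only guaranteed to avoid $N_{E/K^2-\mathrm{const}}(\ell')$, which is strictly shallower than $N_E(\ell')$ as soon as $K>1$ (and even for $K=1$ one loses an additive constant). So $f(C^*)$ may well meet $N_E(\ell')$ and spill into other components, and you have no control linking a generic $c\in C^*$ (as opposed to a point of $g(C)$) back to $C$. The paper avoids this by first descending into a sub-component $C''$ of $C'$ at depth on the order of $E'K^2$, pulling \emph{that} back to obtain a component $C$ of $\cay(\G)\setminus N_{E'K+\mathrm{const}}(\ell)$, and then checking that $f(C)$ does avoid $N_{E'}(\ell')$ and meets $C''$, hence sits inside $C'$; the two rounds of $K$-loss are absorbed precisely because you started $K^2$ deeper. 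Your proposal is missing this "go one level deeper before pulling back" step, and without it the estimate genuinely fails rather than merely needing constants to be chased.

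Two smaller issues in the last part. First, your argument that two essential components stay distinct ("a path joining their $f$-images would pull back ... contradicting essentiality") is mis-aimed: such a path would contradict that $C_1\ne C_2$ are different components, not essentiality. Second, "one-endedness is a quasi-isometric invariant of the unions with $\ell$ and $\ell'$" is too quick: the escaping component $C'$ of $\cay(\G')\setminus\ell'$ is not $f$ of anything in particular, so you cannot transport one-endedness of $C_i\cup\ell$ directly. The paper devotes a separate paragraph to showing, by a compact-exhaustion argument using the already-established $D'$ together with one-endedness of $\cay(\G')$, that every escaping component has one-ended union with $N_{E'}(\ell')$; some version of this is needed and should not be waved away.
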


\noindent This is stated in \cite[Lem.\,1.7]{Pap05} but with a weaker notion of separating; however, that statement with its weaker condition is false.
The stronger notion is due to Papasoglu too.
Note that the cited lemma dealt with \emph{quasi-lines}, but quasi-geodesics will do for~$\G$ thanks to \cref{lemUndistorted2}.

\begin{proof}[Sketch]
Let $K \ge 1$ be the \emph{q.i.-constant} for~$f$.
Then the $K$-neighbourhood~$N_{K}(f(\ell))$ is connected.
Let $q\colon \cay(\Z) \to \cay(\G)$ be a q.i.-embedding whose image is in~$\ell$ and has a finite neighbourhood containing~$\ell$.
Then $f \circ q$ is a q.i.-embedding whose image is in~$f(\ell)$ and has a finite neighbourhood containing $N_{K}(f(\ell))$.
So $\ell' = N_{K}(f(\ell))$ is a quasi-geodesic. 

Suppose~$\ell$ is separating with corresponding function $D \colon \mb R_{\ge 0} \to \mb R_{\ge 0}$.
For $E' \ge 0$, set $D'(E') = \max(E'K^2+6K^3+K^2+K,D(E'K+4K^2)K+2K)$.
Let~$C'$ be a component of $\cay(\G') \setminus N_{E'}(\ell')$ that is not in $N_{D'(E')}(\ell')$.
Then some component~$C''$ of $C' \setminus N_{E'K^2+7K^3+K^2+2K}(f(\ell))$ is not in $N_{D(E'K+4K^2)K+K}(f(\ell))$.
The preimage $f^{-1}(C'')$ is in a component~$C$ of $\cay(\G) \setminus N_{E'K+4K^2}(\ell)$ but not in $N_{D(E'K+4K^2)}(\ell)$.
So~$C$ is not in any finite neighbourhood of~$\ell$, and~$\ell$ is in $N_{D(E'K+4K^2)}(C)$.
Finally,~$f$ maps~$C$ into the component~$C'$ of $\cay(\G')\setminus N_{E'+K}(f(\ell))$;
thus $C'$ is not in any finite neighbourhood of~$\ell'$, and~$\ell'$ is in $N_{D'(E')}(C')$.

Let $E' \ge 0$ and~$C'$ be a component of $\cay(\G') \setminus N_{E'}(\ell')$ that is not in any finite neighbourhood of~$\ell'$.
For a contradiction, suppose that the complement in $C' \cup N_{E'}(\ell')$ of some connected compact subgraph $K \subset C' \cup N_{E'}(\ell')$ has at least two unbounded components.
Replace~$K$ with a larger connected compact subgroup such that $N_{E'}(\ell') \setminus K$ has exactly two components and they are unbounded, and $(C' \cup N_{E'}(\ell')) \setminus K$ has only unbounded components.
As $\cay(\G')$ is one-ended, each component of $(C' \cup N_{E'}(\ell')) \setminus K$ contains a component of $N_{E'}(\ell') \setminus K$.
So $(C' \cup N_{E'}(\ell')) \setminus K$ has exactly two unbounded components~$C_\pm'$, and no finite neighbourhood of~$C_+'$ or~$C_-'$ contains~$\ell'$.
Pick~$R \ge 0$ so that $N_{E'+R}(\ell')$ contains~$K$.
Without loss of generality, some component of $C_+' \setminus N_{E'+R}(\ell')$ is a component of $\cay(\G') \setminus N_{E'+R}(\ell')$ that is not in any finite neighbourhood of~$\ell'$.
By the previous paragraph,~$\ell'$ is in some finite neighbourhood of~$C_+'$ -- a contradiction.

Let $C_1 \neq C_2$ be two essential components of $\cay(\G) \setminus \ell$.
For $i = 1,2$, some component~$C_i^*$ of $C_i \setminus N_{3K^4+2K^3+5K^2}(\ell)$ is not in any finite neighbourhood of~$\ell$.
So~$f$ maps $C_i^*$ into the component~$C_i'$ of $\cay(\G') \setminus N_{3K^3+2K^2+2K}(f(\ell))$ that is not in any finite neighbourhood of~$f(\ell)$.
For a contradiction, suppose $C' = C_1' = C_2'$.
Then the preimage $f^{-1}(C')$ is in a component of $\cay(\G)\setminus N_{K}(\ell)$.
So $C_1^*, C_2^*$ are in the same component of $\cay(\G) \setminus \ell$, which contradicts $C_1 \neq C_2$;
therefore, $C_1' \neq C_2'$ and $N_{3K^3+2K^2+K}(\ell')$ is separating.
\end{proof}

The following is a deep theorem of Papasoglu \cite[Thm.\,1]{Pap05}:

\begin{thm}\label{thmZSplit}
$\cay(\G)$ has a separating quasi-geodesic if and only if~$\G$ splits over~$\Z$. \qed
\end{thm}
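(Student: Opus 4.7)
The plan is to handle the two implications separately; the first is nearly immediate from the preceding lemmas, while the converse is the deep content originally due to Papasoglu.

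For the forward direction, assume $\G$ splits over $\Z$ with Bass--Serre tree $T$, and pick an edge $e$ of $T$ with stabiliser $\langle c \rangle \cong \Z$. By \cref{lemUndistorted2}, $\langle c \rangle$ is undistorted in $\G$, so its orbit gives a quasi-geodesic $\ell \subset \cay(\G)$ on which $\langle c \rangle$ acts cocompactly. The equivariant coarse Lipschitz map $\cay(\G) \to T$ induced by the splitting sends $\ell$ into a bounded neighbourhood of $e$, and the two halfspaces of $T \setminus e$ pull back to two components of $\cay(\G) \setminus \ell$ whose union with $\ell$ remains one-ended (since $\G$ is one-ended, using that $\free$ is noncyclic). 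The function $D$ witnessing strong separation is read off from the linear distortion of this coarse Lipschitz map.

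For the converse, suppose $\ell \subset \cay(\G)$ is a separating quasi-geodesic. The first step is to analyse the coarse stabiliser $H = \{g \in \G : g\ell \subset N_M(\ell) \text{ for some fixed } M\}$ and show it is virtually cyclic. The strong separation hypothesis forces $H$ to act cocompactly on $\ell$: if $H$-orbits missed arbitrarily long sub-arcs of $\ell$, then deep $H$-translates of an essential component would contradict the requirement that $\ell$ lie in $N_{D(E)}(C)$. A Milnor--\v{S}varc argument then identifies $H$ as a group acting properly cocompactly on a quasi-line, hence virtually $\Z$. Next, consider the $\G$-invariant pattern $\mathcal L = \G \cdot \ell$; using strong separation, declare two translates to \emph{cross} when each meets both essential components cut out by the other. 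Non-crossing translates yield nested halfspace systems. The plan is to feed this pattern into Sageev's cubulation to produce a $\G$-action on a CAT(0) cube complex whose hyperplane stabilisers are commensurable with conjugates of $H$, hence virtually $\Z$; collapsing carriers and using torsion-freeness of $\G$ (to refine virtually cyclic edge groups to genuinely infinite cyclic ones) outputs the desired $\Z$-splitting of $\G$.

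The main obstacle is the cubulation step: one must verify that $\mathcal L$ is \emph{locally finite} (only finitely many translates transversely cross any given compact set) and that Sageev's complex is finite-dimensional. Both finiteness inputs genuinely require the strong form of separation, which is precisely why the weaker notion originally stated in \cite[Lem.\,1.7]{Pap05} is inadequate. The control needed to establish local finiteness comes from the uniform function $D$ together with the cocompact $H$-action on $\ell$: a translate $g\ell$ crossing a compact set $K$ must deposit segments of a definite length near $K$, and this bounds the number of admissible cosets $gH$. Once local finiteness and dimension are in hand, the remainder is a standard passage from a proper cocompact action on a CAT(0) cube complex with virtually cyclic hyperplane stabilisers to a $\Z$-splitting of $\G$, finishing the converse.
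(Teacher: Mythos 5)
The paper does not prove this theorem: it is cited to Papasoglu \cite[Thm.\,1]{Pap05} with the \verb|\qed| placed directly in the statement, and the surrounding prose says explicitly ``The following is a deep theorem of Papasoglu.'' So there is no ``paper's approach'' to compare against; you are attempting to reprove a deep result from scratch.

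The converse direction, which is the real content, has a fatal gap at the very first step. You set $H = \{g \in \G : g\ell \subset N_M(\ell)\}$ and claim that strong separation forces $H$ to act cocompactly on $\ell$, hence $H$ is virtually cyclic. But the quasi-geodesic $\ell$ in the hypothesis is arbitrary; nothing forces it to be coarsely stabilised by anything, and in general $H$ is trivial. Your proposed contradiction (``deep $H$-translates of an essential component would contradict $\ell \subset N_{D(E)}(C)$'') does not produce anything when $H$ is trivial, because the strong separation condition is a statement about the complementary components of $\ell$, not about $\G$-translates of $\ell$. Papasoglu's theorem is hard precisely because one must \emph{manufacture} a symmetric pattern out of a non-symmetric $\ell$; this is the heart of \cite{Pap05} and cannot be replaced by asserting the quasi-geodesic is already stabilised. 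Once this step fails, the Sageev input collapses: there is no virtually cyclic $H$ to cubulate with. Even setting that aside, ``collapsing carriers'' of a cube complex with virtually cyclic hyperplane stabilisers does not produce a $\Z$-splitting in general — the honest route from a virtually cyclic codimension-1 subgroup of a one-ended finitely presented group to a splitting is the algebraic annulus theorem of Dunwoody--Swenson or Scott--Swarup, which you neither cite nor reprove.

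The forward direction is also more delicate than you allow. The coarse equivariant map $\cay(\G) \to T$ carries $\ell$ near the chosen edge, but the preimages of the two halfspaces of $T \setminus e$ are not components of $\cay(\G) \setminus \ell$: the map is only coarsely Lipschitz, the preimage of a point is a coset (which coarsely contains but does not equal $\ell$), and the preimages of halfspaces need not even be connected. Establishing that $\cay(\G) \setminus \ell$ genuinely has two essential components with the uniform strong-separation function $D$ requires a bounded-cancellation-type argument, not a pullback. In short, both directions need substantial repair, and the converse is misconceived from its first move.
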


So splitting over~$\Z$ is a geometric invariant of~$\G$!
The cited theorem applies to more general finitely presented groups.
We have specialised and simplified all statements in this section for our free-by-cyclic group~$\G$.
We say $\G, \G'$ are \underline{quasi-isometric} if their Cayley graphs are quasi-isometric.
The following conjecture motivates an extension of \cref{thmZSplit} to slender splittings.

\begin{conj}\label{conjSlender}
If $\G, \G'$ are quasi-isometric, then a slender splitting of~$\G$ induces a slender splitting of~$\G'$.
\end{conj}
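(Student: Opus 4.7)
The plan is to generalize the strategy used for Theorem \ref{thmZSplit}, which detected $\Z$-splittings via separating quasi-geodesics. By Lemma \ref{lemZFreeSplit} (or the discussion before Lemma \ref{lemUndistorted2}), the edge stabilisers of any slender splitting of $\G$ are either $\Z$ or $\Z \rtimes \Z$, so I would handle the two cases separately and combine them at the end. The $\Z$-edge case follows immediately from Theorem \ref{thmZSplit} and Lemma \ref{lemQILines}: such a splitting of $\G$ yields a separating quasi-geodesic in $\cay(\G)$, a quasi-isometry $f \colon \cay(\G) \to \cay(\G')$ pushes it forward to a separating quasi-geodesic in $\cay(\G')$, and Papasoglu's theorem applied to $\G'$ returns a $\Z$-splitting.

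The real work concerns slender splittings with $\Z \rtimes \Z$ edge stabilisers. For such an edge stabiliser~$\mb S$, Lemma \ref{lemUndistorted2} gives an undistorted embedding $\cay(\mb S) \hookrightarrow \cay(\G)$; since~$\mb S$ is virtually $\Z^2$, its image is a \emph{quasi-flat}. I would (i) formalise a notion of \emph{separating quasi-flat} in $\cay(\G)$, mirroring the separating quasi-geodesic definition with a control function $D \colon \mb R_{\ge 0} \to \mb R_{\ge 0}$ that now measures the behaviour of components outside a two-dimensional neighbourhood of the flat; (ii) use the Bass--Serre tree of the slender splitting to verify that each $\Z \rtimes \Z$-edge stabiliser produces such a separating quasi-flat in $\cay(\G)$; and (iii) adapt the argument of Lemma \ref{lemQILines} almost verbatim to show that a quasi-isometry sends a separating quasi-flat to a separating quasi-flat in the target (up to finite neighbourhood), using the quasi-isometric rigidity of $\mb R^2$-flats to ensure the image has the correct quasi-flat structure.

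The main obstacle is then the analogue of Papasoglu's Theorem \ref{thmZSplit} for quasi-flats: converting a separating quasi-flat in $\cay(\G')$ into an honest splitting of~$\G'$ over a $\Z \rtimes \Z$ subgroup. This is a codimension-2 statement that is considerably more delicate than the codimension-1 situation handled in \cite{Pap05}. The most promising route, I think, is to feed the separation data into Dunwoody--Swenson's algebraic torus theorem \cite{DS00}: the separating quasi-flat should produce an almost invariant subset of~$\G'$ whose coboundary has the correct stabiliser type, and the torus theorem then extracts a splitting of~$\G'$ over a virtually polycyclic subgroup, which for a free-by-cyclic group is necessarily slender. An alternative route exploits the specific free-by-cyclic structure: the $\Z \rtimes \Z$ subgroups of~$\G'$ correspond to $\psi$-periodic conjugacy classes in~$\free'$, and this rigid algebraic description might allow one to detect the required edge stabiliser directly from the quasi-flat, bypassing the full torus theorem and reducing the problem to a $\Z$-splitting inside the centraliser of the detected $\Z \rtimes \Z$ subgroup.
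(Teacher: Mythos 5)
This statement is posed as a \emph{conjecture} in the paper, not a theorem; the author explicitly remarks that ``we currently have no geometric characterisation of when~$\G$ splits over $\Z \rtimes \Z$, hence our Conjecture~\ref{conjSlender}.'' So there is no proof in the paper to compare against---the right comparison is between your plan of attack and the paper's own stated diagnosis of where the difficulty lies, and on that score your decomposition matches the paper exactly: the $\Z$-edge case is Papasoglu's \Cref{thmZSplit} (the paper says as much immediately after the conjecture), and the $\Z\rtimes\Z$-edge case is the genuine obstruction.

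That said, the plan you sketch for the hard case has two real gaps, neither of which you can wave away. First, in step~(iii) you appeal to ``quasi-isometric rigidity of $\mathbb{R}^2$-flats'' to conclude that the quasi-isometric image of a separating quasi-flat is again close to a coset of a $\Z\rtimes\Z$ subgroup of~$\G'$. Quasi-flat rigidity of that flavour is a theorem about CAT(0) or similar nonpositively curved spaces (Kleiner--Leeb, Eskin--Farb, Huang); polynomial free-by-cyclic groups are not in general CAT(0), and since they are strongly thick rather than relatively hyperbolic (\Cref{thmRelHyp} and the discussion of Hagen's theorem), you cannot borrow flat rigidity from the relatively hyperbolic toolbox either. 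Without it, the pushed-forward set is just some quasi-flat with no reason to lie near a subgroup coset, and the torus theorem has nothing to bite on.

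Second, even granting that the pushed-forward quasi-flat stays close to a $\Z\rtimes\Z$ subgroup $H\le\G'$, the Dunwoody--Swenson algebraic torus theorem \cite{DS00} takes as hypothesis an \emph{algebraic} end condition ($\tilde e(\G',H)\ge 2$, in the relevant formulation), not a geometric separation condition on the Cayley graph. The passage from ``separating quasi-geodesic'' to ``$e(\G,\langle c\rangle)\ge 2$'' is exactly the content of Papasoglu's argument in the codimension-one case, and a higher-rank analogue of that translation is precisely what is missing from the literature. You correctly name this as the main obstacle, but no mechanism is offered beyond the hope that the torus theorem ``should'' apply---which is the same impasse the paper records. (Your observation that any virtually polycyclic splitting subgroup of~$\G'$ is automatically slender is correct, since $\G'$ is torsion-free of geometric dimension 2; but that only helps once a splitting has been produced.) The alternative route you suggest---detecting the $\Z\rtimes\Z$ subgroup via $\psi$-periodic conjugacy classes and then splitting inside a centraliser---runs into the same issue: you still need to convert coarse separation data in $\cay(\G')$ into a splitting, and nothing in the free-by-cyclic structure obviously supplies that step.

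In short: your proposal is a reasonable sketch of a strategy, it correctly identifies the bottleneck, and it mirrors the picture the author evidently has in mind---but it is an outline of an open problem, not a proof, and the two steps above are genuine unresolved gaps, not routine verifications.
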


Two subsets of a metric space are a \underline{finite Hausdorff distance} apart if each subset is in the finite neighbourhood of the other.
By another theorem of Papasoglu \cite[Thm.\,7.1]{Pap05}, the $\Z$-children are geometric invariants as well:

\begin{thm}\label{thmZChild}
If $f \colon \cay(\G) \to \cay(\G')$ is a quasi-isometry and~$\G_0$ a $\Z$-child of~$\G$, then $f(\G_0)$ is a finite Hausdorff distance from a $\Z$-child of~$\G'$. \qed
\end{thm}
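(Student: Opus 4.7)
The plan is to characterise each $\Z$-child geometrically via a pattern of separating quasi-geodesics and then transport that characterisation across~$f$ using \cref{lemQILines}. Let $T$ be a $\Z$-splitting of~$\G$ in the maximal equivalence class from \cref{thmRSJSJ}, and let $v$ be a vertex of~$T$ whose stabiliser (a conjugate of) $\G_0$ is the noncyclic $\Z$-child in question. For each edge~$e$ incident to~$v$ with stabiliser $\langle c_e \rangle \cong \Z$, the axis of~$c_e$ in~$\cay(\G)$ is a quasi-geodesic by \cref{lemUndistorted2}, and the Bass--Serre projection $\cay(\G) \to T$ shows that a uniform neighbourhood of this axis strongly separates~$\cay(\G)$ (in the sense preceding \cref{lemQILines}) into the two deep regions corresponding to the two subtrees of $T \setminus \{e\}$.

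First I would assemble the $\G$-translates of all edge-stabiliser axes of~$T$ into a family~$\mc L$ of separating quasi-geodesics and verify that $\G_0$ lies within bounded Hausdorff distance of the intersection of the $\cay(\G)$-regions associated with the closed star of~$v$ in~$T$; this is the geometric footprint of~$\G_0$ inside the pattern~$\mc L$. By \cref{lemQILines}, suitable neighbourhoods of $\{f(\ell) : \ell \in \mc L\}$ again form a $\G'$-invariant family of separating quasi-geodesics, and $f(\G_0)$ sits inside a corresponding complementary region in~$\cay(\G')$.

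Next I would invoke Papasoglu's pattern-to-tree construction from \cite{Pap05}: a $\G'$-invariant collection of pairwise coherent separating quasi-geodesics yields a $\Z$-splitting $T'$ of~$\G'$ whose edges correspond (up to bounded equivalence) to the given quasi-geodesics. The relevant notion of crossing --- that the deep components of the complement of one quasi-geodesic each contain an unbounded piece of the other --- is preserved under quasi-isometry, so the combinatorial pattern of~$\mc L$ is reproduced by its image under~$f$. Dominating $T'$ by the canonical $\Z$-splitting of~$\G'$ (\cref{thmRSJSJ}) and unwinding the domination then places $f(\G_0)$ at finite Hausdorff distance from a vertex stabiliser of the maximal equivalence class for~$\G'$, i.e.\ a $\Z$-child of~$\G'$.

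The main obstacle is the quasi-isometry invariance of the crossing pattern. One must verify that the strong separation condition of \cref{lemQILines} forces two separating quasi-geodesics to coarsely cross if and only if their $f$-images do, and in particular that $f(\G_0)$ cannot leak across the image of any axis of an edge-stabiliser incident to~$v$. This coarse crossing analysis is the delicate content of \cite[Thm.\,7.1]{Pap05}, and it is precisely why the strengthened notion of separating quasi-geodesic (due to Papasoglu) is the one used in \cref{lemQILines}.
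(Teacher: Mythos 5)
The paper itself gives no proof of this statement: it is cited verbatim as \cite[Thm.\,7.1]{Pap05}, so there is no ``paper's own proof'' to compare against. Your sketch is a reasonable high-level reconstruction of the pattern-theoretic argument Papasoglu uses (separating quasi-geodesics associated to edge-stabiliser axes of a maximal $\Z$-splitting, transporting the pattern across~$f$ via \cref{lemQILines}, and reconstructing a splitting of~$\G'$ from the image pattern), and you are right that the crossing/coherence analysis is where the technical weight lies.

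That said, two steps in your outline are asserted but not justified, and both hide real work. First, \cref{lemQILines} applies to one separating quasi-geodesic at a time and says nothing about $\G'$-invariance; the family $\{f(\ell) : \ell \in \mc L\}$ is not literally $\G'$-invariant, and promoting it to a genuine $\G'$-invariant pattern (so that the pattern-to-tree machinery yields an actual $\Z$-splitting of~$\G'$) is precisely the nontrivial content of Papasoglu's theorem, not a consequence of \cref{lemQILines}. Second, the final domination step is stated backwards relative to the paper's convention: the maximal equivalence class of \cref{thmRSJSJ} dominates $T'$, meaning the canonical $\Z$-children of~$\G'$ are contained in (not obtained from) vertex stabilisers of~$T'$; ``unwinding'' this does not immediately place $f(\G_0)$ near a $\Z$-child. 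One needs to show that the transported pattern is coarsely equivalent to the pattern of the canonical maximal splitting of~$\G'$ and that the complementary region cutting out $f(\G_0)$ corresponds to a vertex of that splitting. Your sketch openly defers to \cite[Thm.\,7.1]{Pap05} for these points, which is honest, but it means the proposal is an outline of what needs proving rather than a proof.
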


\begin{cor}\label{corZHierarchy}
If $\G, \G'$ are quasi-isometric, then $\delta_\Z(\G') = \delta_\Z(\G)$;
furthermore, if the canonical $\Z$-hierarchy for~$\G$ is complete, then so is the canonical $\Z$-hierarchy for~$\G'$.
\end{cor}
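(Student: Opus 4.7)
The plan is to induct on $\delta_\Z(\G)$, using \cref{corCanonDepth} to replace it throughout with the depth of the canonical $\Z$-hierarchy.

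For the base case $\delta_\Z(\G) = 0$, either $\G$ does not split over $\Z$---so \cref{thmZSplit} denies $\cay(\G)$ a separating quasi-geodesic, \cref{lemQILines} then denies $\cay(\G')$ one, and $\delta_\Z(\G') = 0$ with both canonical hierarchies trivially incomplete---or $\G$ has an absolute $\Z$-splitting, in which case any dominating $\Z$-splitting is again absolute (a noncyclic vertex stabiliser of the dominator would have to fix a vertex of the absolute splitting, impossible), so the canonical maximal class has no $\Z$-children. Applying \cref{thmZChild} to $f$ and $f^{-1}$ shows $\G'$ also has no canonical $\Z$-children, while \cref{thmZSplit} guarantees $\G'$ still splits over $\Z$; its maximal $\Z$-splitting must therefore be absolute, giving $\delta_\Z(\G') = 0$ with completeness preserved.

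For the inductive step, let $\G_1, \dots, \G_k$ be the canonical $\Z$-children of~$\G$, each a free-by-cyclic subgroup with canonical $\Z$-hierarchy of depth strictly less than $\delta_\Z(\G)$. By \cref{thmZChild}, each orbit $\G_i \cdot e \subset \cay(\G)$ is sent by $f$ to within finite Hausdorff distance of some orbit $\G_{\sigma(i)}' \cdot e' \subset \cay(\G')$, where $\G_{\sigma(i)}'$ is a $\Z$-child of $\G'$. Since \cref{lemUndistorted} identifies these orbits (up to quasi-isometry) with $\cay(\G_i)$ and $\cay(\G_{\sigma(i)}')$ respectively, $f$ induces a quasi-isometry $\cay(\G_i) \to \cay(\G_{\sigma(i)}')$. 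The induction hypothesis then gives $\delta_\Z(\G_{\sigma(i)}') = \delta_\Z(\G_i)$ and preserves completeness. Symmetrising via $f^{-1}$ shows that the $\G_{\sigma(i)}'$ account for every canonical $\Z$-child of $\G'$, and \cref{corCanonDepth} delivers $\delta_\Z(\G') = \max_i \delta_\Z(\G_{\sigma(i)}') + 1 = \delta_\Z(\G)$ with completeness inherited.

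The main obstacle is showing that \cref{thmZChild} genuinely delivers a bijection between the canonical $\Z$-children of $\G$ and those of $\G'$: surjectivity follows by symmetry from applying the theorem to $f^{-1}$, but injectivity---distinct $\G_i$ being assigned distinct $\G_{\sigma(i)}'$---requires that distinct canonical $\Z$-children sit at infinite Hausdorff distance inside $\cay(\G)$. This ought to follow from the pairwise nonconjugacy of the noncyclic vertex stabilisers in the maximal $\Z$-splitting together with its cyclic edge stabilisers, but it is the step that genuinely requires care.
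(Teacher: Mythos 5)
Your approach is essentially the paper's: induct on $\delta_\Z(\G)$ via \cref{corCanonDepth}, transport $\Z$-children across the quasi-isometry using \cref{thmZChild} and \cref{lemUndistorted}, and settle the base case with \cref{thmZSplit} and \cref{lemQILines}. The one substantive difference is that the ``main obstacle'' you flag at the end is self-imposed. You do not need the assignment $i \mapsto \sigma(i)$ to be injective, nor in fact a bijection onto the canonical $\Z$-children of $\G'$. Applying \cref{thmZChild,lemUndistorted} to $f^{-1}$ gives, for every canonical $\Z$-child $\G_j'$ of $\G'$, some canonical $\Z$-child $\G_{\tau(j)}$ of $\G$ with $\cay(\G_j')$ quasi-isometric to $\cay(\G_{\tau(j)})$. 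Then the induction hypothesis yields $\delta_\Z(\G_j') = \delta_\Z(\G_{\tau(j)}) \le \max_i \delta_\Z(\G_i)$ for every $j$, and symmetrically $\delta_\Z(\G_i) = \delta_\Z(\G_{\sigma(i)}') \le \max_j \delta_\Z(\G_j')$ for every $i$; the two maxima coincide, and \cref{corCanonDepth} gives $\delta_\Z(\G') = \delta_\Z(\G)$ with no bijection invoked. The same one-directional use of $f^{-1}$ handles completeness: every $\Z$-child $\G_j'$ of $\G'$ is quasi-isometric to some $\G_{\tau(j)}$ whose canonical $\Z$-hierarchy is complete by hypothesis, so by induction each $\G_j'$ has a complete canonical $\Z$-hierarchy, and hence so does $\G'$. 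This is precisely how the paper's proof avoids the step you worry about. For what it is worth, your suspicion is correct and can be completed along the lines you sketch: if two essentially distinct conjugates of canonical $\Z$-children were at finite Hausdorff distance they would be commensurable, so a common finite-index subgroup would fix both of their fixed vertices in the canonical $\Z$-splitting and hence the path between them, forcing it to lie in a cyclic edge stabiliser -- impossible for a finite-index subgroup of a noncyclic vertex group. But the cleaner double-inequality argument makes this unnecessary.
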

\begin{proof}
By \cref{thmZChild},~$\G$ has a $\Z$-child (i.e.~$\delta_\Z(\G) > 0$ by \cref{corCanonDepth}) if and only if~$\G'$ does too.
Thus the first part holds if $\delta_\Z(\G) = 0$.
Suppose $\delta_\Z(\G) > 0$ and the first part holds for free-by-cyclic groups with $\Z$-depth $< \delta_\Z(\G)$.
By \cref{thmZChild,lemUndistorted}, each $\Z$-child~$\G_i$ of~$\G$ is quasi-isometric to a $\Z$-child~$\G_{j_i}'$ of~$\G'$, and $\delta_\Z(\G_i) < \delta_\Z(\G)$.
Similarly, each $\Z$-child~$\G_j'$ of~$\G'$ is quasi-isometric to a $\Z$-child~$\G_{i_j}$ of~$\G$.
By the induction hypothesis, $\delta_\Z(\G_i) = \delta_\Z(\G_{j_i}')$, $\delta_\Z(\G_j') = \delta_\Z(\G_{i_j})$ for all $\Z$-children of~$\G, \G'$;
therefore, $\delta_\Z(\G') = \delta_\Z(\G)$.

For the second part of the theorem, assume the canonical $\Z$-hierarchy for~$\G$ is complete, and set $\delta = \delta_\Z(\G') = \delta_\Z(\G)$.
If $\delta = 0$, then both $\G, \G'$ have absolute $\Z$-splittings by \cref{thmZSplit,lemQILines}, and the canonical $\Z$-hierarchy for~$\G'$ is complete too.
Suppose $\delta > 0$ and the second part holds for free-by-cyclic groups with $\Z$-depth $< \delta$.
As above, each $\Z$-child of~$\G'$ is quasi-isometric to a $\Z$-child of~$\G$, and the latter are assumed to have complete canonical $\Z$-hierarchies.
By the induction hypothesis, each $\Z$-child of~$\G'$ has a complete canonical $\Z$-hierarchy and so does~$\G'$.
\end{proof}

By \cref{thmGroupInv,corZHierarchy}, growth type is a geometric invariant:

\begin{cor}\label{corQIInv}
If $\G, \G'$ are quasi-isometric and~$\G$ is polynomial, then so is~$\G'$. \qed
\end{cor}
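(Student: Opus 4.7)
The plan is to chain together Theorem \ref{thmGroupInv} with Corollary \ref{corZHierarchy}, using the fact that a $\Z$-hierarchy is automatically a slender hierarchy of a particularly structured kind. Essentially all of the substantive work has already been carried out: Papasoglu's theorem promotes cyclic splittings to a geometric invariant, and the hierarchy-level upgrade was done in \cref{corZHierarchy}. The role of this corollary is to package these facts into the statement that polynomial growth itself is a quasi-isometry invariant.

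First I would unpack the hypothesis using \cref{thmGroupInv}. Since $\G$ is polynomial, condition~(1) of that theorem applies: every $\Z$-hierarchy for~$\G$ is complete. In particular, the canonical $\Z$-hierarchy for~$\G$ (\cref{thmRSJSJ}, or the degenerate case $\G \cong \Z \rtimes \Z$) is complete, meaning its terminal descendants admit absolute $\Z$-splittings.

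Next I would transport this completeness to~$\G'$ via \cref{corZHierarchy}: since $\G, \G'$ are quasi-isometric and the canonical $\Z$-hierarchy for~$\G$ is complete, so is the canonical $\Z$-hierarchy for~$\G'$. Here one should briefly justify that $\G'$ is a genuine free-by-cyclic group (in particular that its $\free'$ factor is noncyclic), so that \cref{corZHierarchy} is applicable; if $\G'$ were virtually abelian, then so would~$\G$ be, and the statement is trivial.

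Finally, I would observe that the canonical $\Z$-hierarchy for~$\G'$, being complete, gives a complete slender hierarchy for~$\G'$: every $\Z$-splitting is a slender splitting because~$\Z$ is slender, and an absolute $\Z$-splitting has infinite cyclic vertex stabilisers, hence is an absolute cyclic splitting in the sense used for slender hierarchies. Condition~(2) of \cref{thmGroupInv} is therefore satisfied for~$\G'$, so by the equivalence $(2){\Leftrightarrow}(3)$ the outer class~$\psi$ is polynomially growing and~$\G'$ is polynomial. No step here is a serious obstacle; the only thing to watch is the definitional sanity check that a complete $\Z$-hierarchy qualifies as a complete slender hierarchy, and the small base case where one of~$\G, \G'$ is virtually abelian.
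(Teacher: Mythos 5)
Your proposal is correct and follows the paper's own (terse) proof, which simply cites \cref{thmGroupInv} and \cref{corZHierarchy}; you fill in the same chain of implications, and the sanity check that a complete $\Z$-hierarchy counts as a complete slender hierarchy (so that condition~(2) of \cref{thmGroupInv} applies to~$\G'$) is the right thing to verify. The handling of the virtually abelian edge case is also consistent with the paper's standing assumption in \cref{secGeomInv} that~$\free$ is not cyclic.
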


\noindent This also follows from \cref{thmRelHyp} below.
Macura \cite[Thm.\,1.2]{Mac02} gave a geometric characterisation of the degree of a polynomially growing outer automorphism:

\begin{thm}\label{thmDegQIInv}
A polynomial~$\G$ has polynomial divergence with degree $\Deg(\phi)+1$. \qed
\end{thm}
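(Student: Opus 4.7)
The plan is to establish both $\operatorname{div}_\G(r) \preceq r^{d+1}$ and $\operatorname{div}_\G(r) \succeq r^{d+1}$, where $d = \Deg(\phi) = \delta_s(\G)$ (the identification being \cref{corGroupInv}), by induction on~$d$.  For the base case $d=0$, the outer automorphism $\phi$ has finite order by \cref{thmNRP}, so some power $\Phi^n$ is inner, and a finite index subgroup of~$\G$ is isomorphic to $\free \times \Z$.  Divergence is a quasi-isometry invariant, and $\free \times \Z$ manifestly has linear divergence (use the $\Z$-factor as a transverse ``flat'' to detour around any ball), so the base case holds with exponent $0+1 = 1$.

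For the upper bound in the inductive step, fix a slender splitting of~$\G$ realising $\delta_s(\G) = d$; by \cref{lemZFreeSplit} and the slender hierarchy construction in \cref{secGrpInv}, each child is a polynomial free-by-cyclic group of degree~$< d$, hence has divergence $\preceq r^d$ by the induction hypothesis.  To connect two points $x, y \in \cay(\G)$ at distance $\asymp r$ while avoiding a ball of radius $\asymp r$ around a third point, I would decompose the connecting path along the Bass--Serre tree of the splitting: within each vertex-group coset apply the inductive divergence bound (cost $r^d$ per detour), and use \cref{lemUndistorted2} to guarantee the slender edge stabilisers are undistorted so that transitions between cosets do not blow up.  The key estimate is that the polynomial twisting suffered when pushing a detour from one vertex coset across an edge contributes a further factor of $O(r)$ in the worst case; multiplying the ``horizontal'' cost $r^d$ by the ``vertical'' cost $O(r)$ gives the desired $r^{d+1}$ upper bound.

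For the lower bound, fix an absolute free splitting~$T$ of~$\free$ and pick $x \in \free$ witnessing the degree, i.e.\ $\|\Phi^n(x)\|_T \asymp n^d$.  The natural witnessing pair is $(e, g)$ with $g = t^N x t^{-N}$: the straight path of length $\asymp N$ through levels $0, 1, \dots, N, \dots, 0$ goes near the identity, so a geodesic that must avoid a large central ball has only two regimes available.  It can remain at low $t$-level, where the horizontal cost is $\|\Phi^N(x)\|_T \asymp N^d$, or it can ascend to a high $t$-level $M$, paying $\asymp M$ to go up and down plus horizontal cost $\asymp N \cdot M^{d-1}$ or $\asymp |x| \cdot M^d$ at that level.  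Optimising over~$M$, both regimes and their mixtures are forced to pay $\asymp N^{d+1}$, yielding the matching lower bound.

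The main obstacle is the lower bound: one must rule out clever shortcuts that exploit the automorphism's action at intermediate $t$-levels.  Here \cref{corPolyInverse} is crucial, since $\phi^{\pm 1}$ have the same degree, which prevents asymmetric speedups by passing through higher floors.  A clean implementation proceeds inductively through the slender hierarchy, using \cref{lemUndistorted,lemUndistorted2} to control distortion of each subhierarchy piece, combining the inductive lower bound at each child with the observation that every detour must pay the crossing cost on the Bass--Serre tree associated with the top-level slender splitting.  An alternative modern approach is via asymptotic cones, showing each cone is ``cut'' by the slender splitting with the right quantitative control; Macura's original argument instead works directly in the universal cover of a canonical polynomial mapping-torus $2$-complex, tracking how discs filling boundary loops must have polynomially large area.
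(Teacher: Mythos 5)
The paper offers no proof of this theorem; it is cited from Macura, so there is no ``paper's route'' to compare against, and the question is simply whether your proposed argument stands on its own. The base case $d=0$ is correct: $\Deg(\phi)=0$ forces $\phi$ to have finite order, a finite index subgroup of $\G$ is $\free\times\Z$, and linear divergence is a quasi-isometry invariant. The upper bound outline (induct on $\delta_s(\G)$, use the top-level slender splitting, control edge-crossings via undistortedness of slender subgroups) is plausible, but the pivot of that argument---that carrying a detour across an edge of the Bass--Serre tree contributes only a multiplicative factor $O(r)$---is asserted, not proved, and it is exactly the content one must supply; Macura's corridor construction is the device that does this work.

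The lower bound, however, contains a concrete error. Set $g=t^Nxt^{-N}=\Phi^N(x)\in\free$ with $\|\Phi^n(x)\|\asymp n^d$ and $d\ge 1$. You correctly compute $d_{\G}(e,g)\asymp N$, realised by climbing to level $N$, crossing $x$, and descending, so the obstruction ball should be centred at a point of $t$-level $N$, say $t^N$. But now consider the detour that stays at $t$-level $0$ and reads $\Phi^N(x)$ inside $\free$: it has length $\asymp N^d$, and \emph{every point on it is at $\G$-distance at least $N$ from $t^N$}, because any word representing $t^{-N}w$ with $w\in\free$ must contain at least $N$ occurrences of $t^{\pm1}$. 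So this path already avoids any ball of radius $<N$ around $t^N$ and costs only $\asymp N^d$, one power of $N$ short of $N^{d+1}$. Your own enumeration of ``regimes'' lists this option with cost $N^d$, so the claim that ``both regimes and their mixtures are forced to pay $\asymp N^{d+1}$'' contradicts the estimate you yourself wrote down. The pair $(e,t^Nxt^{-N})$ therefore witnesses divergence $\succeq N^d$ at best. To obtain the extra power of $r$, the witnessing pair and obstruction must be chosen so that the cheap $\free$-level crossing is genuinely blocked, and one must rule out intermediate-level shortcuts quantitatively; this is what Macura's corridor and area estimates in the mapping-torus $2$-complex accomplish and what your optimisation over $M$ does not. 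As written, the lower bound does not close, so the proposal does not prove the theorem.
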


Roughly speaking, the {divergence} is a (collection of) function(s) that measures how quickly geodesic rays diverge.
The interested reader should refer to Stephen Gersten's paper \cite[\S2]{Ger94}, where divergence is introduced and noted to be a geometric invariant.
Recently, Mark Hagen \cite[Thm.\,1.2]{Hag19} `modernised' Macura's theorem by showing that a polynomial~$\G$ is {strongly thick} of order $\deg(\phi)$.
{Thickness} is a more structural property (compared to divergence) that was introduced by Behrstock--Dru\c{t}u--Mosher \cite[\S7]{BDM09} as an obstruction to relative hyperbolicity.

Alternatively, one could reprove the geometric invariance of the degree by showing: $\delta_s(\G)$ is the depth of a canonical slender hierarchy for~$\G$; and this hierarchy is a geometric invariant.
Fujiwara--Papasoglu \cite[Thm.\,5.13]{FP06} already developed a slender analogue of Rips--Sela's \cref{thmRSJSJ}.
We currently have no geometric characterisation of when~$\G$ splits over $\Z \rtimes \Z$, hence our \cref{conjSlender}.


\section{Relative hyperbolicity}\label{secRelHyp}

We finally address the geometry of free-by-cyclic groups that are not polynomial.

First, consider the extreme case  when $\mc P(\G)$ is empty, or equivalently, $\mc P(\phi)$ is empty.
By \cref{propDichotomy}, polynomially growing outer automorphisms of~$\free$ have periodic conjugacy classes of nontrivial elements in~$\free$, e.g.~nontrivial elements in terminal descendants of a complete fixed free hierarchy;
thus, $\mc P(\phi)$ is empty if and only if~$\phi$ is \underline{atoroidal}: there are no $\phi$-periodic conjugacy classes of nontrivial elements in~$\free$.
We remark that $\mc P(\G)$ is empty if and only if~$\G$ has no free abelian subgroup of rank~2.
Peter Brinkmann \cite[Thm.\,1.1]{Bri00} proved that the absence of~$\Z^2$ subgroups is a geometric invariant:

\begin{thm}\label{thmHyperbolic}
The following are equivalent:
\begin{enumerate}
\item $\cay(\G)$ is hyperbolic;
\item $\G$ has no $\Z^2$ subgroups; and
\item $\phi$ is atoroidal. \qed
\end{enumerate}
\end{thm}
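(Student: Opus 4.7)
The plan is to prove the cycle (1) $\Rightarrow$ (2) $\Rightarrow$ (3) $\Rightarrow$ (1); only the third implication requires substantive work. The implication (1) $\Rightarrow$ (2) is classical: in any word-hyperbolic group, the centraliser of an infinite-order element is virtually cyclic, so no copy of $\Z^2$ can embed as a subgroup.

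For (2) $\Rightarrow$ (3), I would argue the contrapositive. Suppose $\phi$ is not atoroidal, so $\Phi^n(x) = g x g^{-1}$ for some $n \geq 1$, some $g \in \free$, and some nontrivial $x \in \free$. Then the element $y = g^{-1} t^n \in \G$ satisfies $y x y^{-1} = g^{-1} t^n x t^{-n} g = g^{-1} \Phi^n(x) g = x$, so $x$ and $y$ commute. Since $x$ is a nontrivial element of $\free$ while $y \notin \free$, they are independent, and $\langle x, y \rangle \cong \Z^2$.

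The substantive direction is (3) $\Rightarrow$ (1), Brinkmann's theorem. My plan is to apply the Bestvina--Feighn combination theorem to the natural HNN splitting of $\G$ over $\free$ with stable letter $t$. That theorem guarantees hyperbolicity of $\G$ (and hence of $\cay(\G)$) provided the ``annuli flare'' condition is satisfied, which in this setting amounts to: there exist $\lambda > 1$ and $N \geq 1$ such that $\max\{\|\Phi^N(x)\|, \|\Phi^{-N}(x)\|\} \geq \lambda \|x\|$ for every conjugacy class $[x]$ of sufficiently large cyclic length with respect to a fixed basis of $\free$. The atoroidal hypothesis is exactly what is needed at the level of individual classes: as noted at the start of this section, atoroidality is equivalent to $\mc P(\phi) = \emptyset$, and since $\mc P(\phi^{-1}) = \mc P(\phi)$ by \cref{corPolyInverse}, \cref{propPolySubgp} gives that every nontrivial $[x]$ grows exponentially under both $\phi$- and $\phi^{-1}$-iteration.

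The hard part will be upgrading these pointwise exponential bounds to the \emph{uniform} flare required by the combination theorem. For this I would represent $\phi$ by a relative train-track map via Bestvina--Handel and exploit $\mc P(\phi) = \emptyset$ to control how conjugacy classes interact with the stratification: the absence of any subgroup of $\free$ whose conjugacy classes all grow polynomially rules out the chief obstruction to uniform flaring, since every stratum must ultimately feed into exponential growth in both directions. Perron--Frobenius control of the exponential strata combined with bounded cancellation (as in the proof of \cref{lemUndistorted}) then promotes per-class exponential growth to a uniform exponent $\lambda > 1$ valid for all sufficiently long conjugacy classes; once flaring holds, the combination theorem delivers hyperbolicity of $\G$.
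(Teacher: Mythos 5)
The paper does not prove \cref{thmHyperbolic} at all --- it cites Brinkmann and closes the statement with a \qed. So the comparison to make is against Brinkmann's actual argument, not against anything in these notes.

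Your implications $(1)\Rightarrow(2)$ and $(2)\Rightarrow(3)$ are correct and complete. In particular, for $(2)\Rightarrow(3)$ the computation that $y=g^{-1}t^n$ centralises $x$ is right, and the separation $x\in\free$, $y\notin\free$ (detected by the projection $\G\to\Z$) rules out a common power, so $\langle x,y\rangle\cong\Z^2$ as claimed.

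For $(3)\Rightarrow(1)$, the strategy you outline --- combine via Bestvina--Feighn over the natural HNN splitting, and verify flaring using relative train tracks, Perron--Frobenius control of exponential strata, and bounded cancellation --- is indeed the route Brinkmann takes, so you have the right shape of argument. But two things are glossed over, and they are exactly where the theorem lives. First, what the combination theorem needs is a \emph{hallways} (element/path) flare condition, not just a condition on conjugacy classes; you phrase the condition in terms of conjugacy classes, whereas atoroidality together with \cref{propPolySubgp} and \cref{corPolyInverse} only hands you pointwise exponential growth of conjugacy classes under $\phi^{\pm 1}$. Translating that into uniform growth of all sufficiently long \emph{paths} is the entire technical content of Brinkmann's paper. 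Second, even for conjugacy classes, pointwise exponential growth does not immediately give a single $\lambda>1$ working uniformly; your appeal to ``every stratum must ultimately feed into exponential growth in both directions'' is the claim that needs a genuine argument (non-exponential and periodic strata, and how paths crossing multiple strata grow, are the real obstructions --- $\mc P(\phi)=\emptyset$ helps but does not by itself collapse the stratification). You correctly flag this as ``the hard part,'' but the sketch does not cross that gap; as written it restates where the difficulty is rather than resolving it.
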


A locally finite graph is \underline{hyperbolic} if there is a $\delta \ge 0$ such that the $\delta$-neighbourhood of the union of two sides of any geodesic triangle in the graph contains the third side of the triangle;
Misha Gromov \cite[Cor.\,2.3.E]{Gro87} introduced this geometric invariant.
Gromov also introduced {relative hyperbolicity} in \cite[\S8.6]{Gro87} as a group property that generalises lattices of negatively curved symmetric spaces.

\Cref{thmHyperbolic} gives a geometric characterisation of when~$\mc P(\G)$ is empty.
Recently, this was generalised to the case when $\mc P(\G) \neq \{ \G\}$, i.e.~$\G$ is not polynomial.

\begin{thm}\label{thmRelHyp}
The following are equivalent:
\begin{enumerate}
\item $\G$ is hyperbolic rel.~$\mc P(\G)$;
\item $\G$ is relatively hyperbolic;
\item $\G$ is not polynomial; and
\item $\phi$ is exponentially growing.
\end{enumerate}
\end{thm}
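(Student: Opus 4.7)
The plan is to prove the cycle (4)$\Rightarrow$(3)$\Rightarrow$(1)$\Rightarrow$(2)$\Rightarrow$(4). The equivalence (3)$\Leftrightarrow$(4) is already contained in \cref{thmGroupInv} together with \cref{propDichotomy}, and (1)$\Rightarrow$(2) is immediate once we observe that when~$\phi$ is exponentially growing, $\mc P(\phi)$ consists of proper subgroups of~$\free$ by \cref{propPolySubgp}, so $\mc P(\G) \neq \{\G\}$ and the peripheral family is genuinely proper.

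For (2)$\Rightarrow$(3), I would argue by contraposition: suppose~$\G$ is polynomial. By Hagen \cite[Thm.\,1.2]{Hag19} (or, more weakly, by Macura's \cref{thmDegQIInv}), $\G$ is strongly thick of order $\Deg(\phi)$. Behrstock--Dru\c{t}u--Mosher \cite[\S7]{BDM09} showed that strongly thick groups are never relatively hyperbolic with respect to a proper family of peripheral subgroups. Since $\mc P(\G) = \{\G\}$ in the polynomial case, this rules out both forms of relative hyperbolicity at once.

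The heart of the theorem is (3)$\Rightarrow$(1). Exploiting the dynamics of exponentially growing~$\phi$, I would fix a relative train track (or CT) representative $g \colon \Gamma \to \Gamma$ of~$\phi$ whose filtration isolates the polynomially growing parts into lower strata; the conjugacy classes carried by these lower strata are precisely those carried by the subgroups in $\mc P(\phi)$, and the top strata are exponentially growing. The mapping torus $M_g$ is a classifying space for~$\G$, and the mapping tori of the restrictions~$\phi_i$ on the polynomial sub-filtration realise the peripherals $\G_i \in \mc P(\G)$. To promote this filtered structure to relative hyperbolicity of~$\G$ rel.~$\mc P(\G)$, I would electrocute the peripheral cosets in~$\cay(\G)$ (equivalently, work in the Bowditch cusped model) and verify $\delta$-hyperbolicity of the resulting space, using exponential stretching on the top EG strata together with the malnormality of~$\mc P(\G)$ proved in \cref{propPolySubgp2}. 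The bounded coset penetration property then follows from malnormality and the train track structure. This strategy has been implemented in various forms by Gautero--Lustig, Dahmani--Li, Ghosh, and Mj--Reeves.

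The hard part is the $\delta$-hyperbolicity of the electrocuted Cayley graph. One must control how quasi-geodesics of~$\cay(\G)$ that avoid the peripheral pieces are stretched by the exponential strata; a combination-theorem argument in the style of Bestvina--Feighn, adapted to the relative setting, makes this precise. Dispensing entirely with train tracks and proving (3)$\Rightarrow$(1) by purely coarse-geometric methods appears to be out of reach at present.
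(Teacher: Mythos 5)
Your proposal follows essentially the same route as the paper: the easy implications are dispatched via \cref{thmGroupInv}/\cref{propDichotomy} and the definition of relative hyperbolicity, $(2)\Rightarrow(3)$ is handled by citing Macura's divergence theorem or Hagen's thickness result (exactly as in the paper), and the hard direction $(3)\Rightarrow(1)$ is deferred to the recent complete proofs of Ghosh and Dahmani--Li (your electrocution/train-track sketch is a fair description of what those references do, but you are not proving it here any more than the paper does). The only cosmetic difference is that you organize the implications as a cycle rather than two standalone implications plus trivialities, which changes nothing of substance.
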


\noindent Cornelia Dru\c{t}u \cite[Thm.\,1.2]{Dru09} proved that relative hyperbolicity is a geometric invariant!
So this theorem gives another proof of \cref{corQIInv}.

\begin{proof}[Outline]
($2{\Rightarrow}3$) is essentially Macura's \cref{thmDegQIInv} since it is folklore that relatively hyperbolic groups have exponential divergence.
Alternatively, this is Hagen's theorem since thick groups cannot be relatively hyperbolic.

($4{\Rightarrow}1$) was initially announced by Gautero--Lustig in 2007, but their proof was incomplete.
Pritam Ghosh \cite[Cor.\,3.16]{Gho23} and Dahmani--Li \cite[Thm.\,0.4]{DL22} recently gave complete independent proofs.
\end{proof}

\begin{proof}[Another proof for \cref{propPolySubgp2}]
Suppose $\G$ is not polynomial. 
Then~$\G$ is hyperbolic rel.~$\mc P(\G)$ by \cref{thmRelHyp}($3{\Rightarrow}1$).
As any free-by-cyclic subgroup is undistorted (\cref{lemUndistorted}), $\G' \le \G$ is relatively hyperbolic or conjugate into some subgroup in~$\mc P(\G)$ \cite[Thm.\,1.8]{DS05}.
By \cref{thmRelHyp}($2{\Rightarrow}3$), if $\G' \le \G$ is polynomial, then it is conjugate into some subgroup in~$\mc P(\G)$.
By malnormality of peripheral structures, $\mc P(\G)$ is the collection of maximal polynomial free-by-cyclic subgroups of~$\G$.
\end{proof}

Not only is $\mc P(\G)$ a group invariant of~$\G$, it turns out to be a geometric invariant!
Behrstock--Dru\c{t}u--Mosher \cite[Thm.\,4.8]{BDM09} strengthened Dru\c{t}u's geometric invariance theorem when the group is hyperbolic relative to non-(relatively hyperbolic) subgroups:

\begin{thm}If $f \colon \cay(\G) \to \cay(\G')$ is a quasi-isometry and $\mb P \in \mc P(\G)$, then $f(\mb P)$ is a finite Hausdorff distance from a conjugate of some $\mb P' \in \mc P(\G')$. \qed 
\end{thm}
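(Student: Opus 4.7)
The plan is to invoke the cited theorem of Behrstock--Dru\c{t}u--Mosher after verifying its two hypotheses in our setting: namely, (i) that $\G$ and $\G'$ are hyperbolic relative to $\mc P(\G)$ and $\mc P(\G')$ respectively, and (ii) that every peripheral subgroup in these families is itself ``not relatively hyperbolic'' in the sense required by BDM, so that they fall into the rigidity case of their quasi-isometric classification.

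First I would dispose of the degenerate case. If $\G$ is polynomial, then $\mc P(\G) = \{ \G \}$; by \cref{corQIInv}, $\G'$ is polynomial too and $\mc P(\G') = \{ \G' \}$. Since $f$ is a quasi-isometry, $f(\G)$ is a finite Hausdorff distance from $\cay(\G')$, and the conclusion is immediate with $\mb P' = \G'$. Henceforth I may assume $\G$ (and hence $\G'$) is not polynomial.

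Next I establish the two hypotheses. By \cref{thmRelHyp}($3 \Rightarrow 1$), $\G$ is hyperbolic relative to $\mc P(\G)$ and $\G'$ is hyperbolic relative to $\mc P(\G')$. Each $\mb P \in \mc P(\G)$ is, by construction, a polynomial free-by-cyclic group, so by \cref{thmRelHyp}($2 \Rightarrow 3$) applied to $\mb P$, the group $\mb P$ admits no nontrivial relatively hyperbolic structure at all; the same holds for elements of $\mc P(\G')$. This verifies the ``non-relatively-hyperbolic peripherals'' hypothesis of the cited BDM theorem.

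With both hypotheses in hand, the cited theorem of Behrstock--Dru\c{t}u--Mosher applies directly and produces, for each $\mb P \in \mc P(\G)$, a peripheral $\mb P' \in \mc P(\G')$ and an element $g' \in \G'$ such that $f(\mb P)$ lies at finite Hausdorff distance from $g' \mb P' (g')^{-1}$. The main conceptual obstacle is not a computation but rather confirming the compatibility of conventions: one must be sure that the notion of relative hyperbolicity used in \cref{thmRelHyp} coincides with BDM's, and that ``$\G$ is not polynomial'' rules out every nontrivial peripheral structure on a $\mb P \in \mc P(\G)$, not merely the one singled out in \cref{thmRelHyp}; once this identification is granted, the proof is a one-line citation.
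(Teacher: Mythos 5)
Your proof is correct and takes exactly the same route as the paper: the paper's ``proof'' is precisely the citation to Behrstock--Dru\c{t}u--Mosher together with the implicit appeal to \cref{thmRelHyp} to verify that $\G$ is hyperbolic relative to $\mc P(\G)$ and that the peripherals, being polynomial, are not relatively hyperbolic. You have just spelled out the verification steps (including the degenerate polynomial case) that the paper leaves to the reader.
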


To conclude, we return to the extreme case when~$\phi$ is atoroidal.
The outer automorphism~$\phi$ is \underline{fully irreducible} if there are no $\phi$-periodic conjugacy classes of nontrivial proper free factors of~$\free$.
Using \cref{lemIntersect}, we \cite[Thm.\,4.3]{Mut21} proved that being both fully irreducible and atoroidal was a group (actually commensurability) invariant:

\begin{thm}
$\G$ has no infinite index free-by-cyclic subgroups if and only if~$\phi$ is fully irreducible and atoroidal. \qed
\end{thm}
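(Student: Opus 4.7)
The plan is to prove the biconditional, handling the easier direction by contrapositive.

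For the ``if not fully irreducible and atoroidal, then there is an infinite index free-by-cyclic subgroup'' direction, I would split into two cases. If $\phi$ is not atoroidal, pick $n \ge 1$, nontrivial $x \in \free$, and $y \in \free$ with $\Phi^n(x) = yxy^{-1}$. A direct check shows $y^{-1}t^n$ commutes with $x$, so $\langle x, y^{-1}t^n\rangle \cong \Z^2$; since $\free$ is noncyclic and the projection $\G \to \Z$ shows this subgroup meets $\free$ in $\langle x\rangle$, it is an infinite-index free-by-cyclic subgroup. If $\phi$ is atoroidal but not fully irreducible, there is a proper free factor $\free_0$ with $\Phi^n(\free_0) = y\free_0 y^{-1}$; atoroidality forces $\rank(\free_0) \ge 2$ (otherwise a primitive element would have periodic conjugacy class). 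Then $s = y^{-1}t^n$ normalizes $\free_0$, so $\G_0 := \free_0 \rtimes \langle s \rangle$ is free-by-cyclic, and $\G_0 \cap \free = \free_0$ has infinite index in $\free$.

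For the ``if fully irreducible and atoroidal, then no infinite index free-by-cyclic subgroup'' direction, let $\G' \le \G$ be free-by-cyclic. By \cref{lemIntersect}, $\free' := \G' \cap \free$ is finitely generated; by the definition of free-by-cyclic it is also nontrivial. Writing $\G' = \free' \rtimes \langle s\rangle$ with $s = xt^n$, the conjugation action of $s$ on $\free'$ witnesses that $[\free']_{\free}$ is $\phi^n$-periodic. I would then split into two subcases. If $\free'$ is cyclic, then some power of $\phi$ fixes the conjugacy class of a nontrivial element, contradicting atoroidality (which is preserved under taking powers). If $\rank(\free') \ge 2$, then full irreducibility of $\phi^n$ (also preserved under powers) rules out $\free'$ being a proper free factor. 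The plan is then to invoke the Bestvina--Feighn--Handel lamination machinery: the unique attracting lamination $\Lambda^+(\phi^n)$ is carried by any $\phi^n$-periodic finitely generated subgroup of rank $\ge 2$ that is not a proper free factor, and minimality of $\Lambda^+(\phi^n)$ together with the expanding train-track representative forces $\free'$ to have finite index in $\free$. Consequently $\G'$ has finite index in $\G$.

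The main obstacle is exactly this last step: ruling out proper, infinite-index, rank $\ge 2$, $\phi^n$-periodic subgroups of $\free$ when $\phi$ is fully irreducible. Full irreducibility is stated only in terms of free factors, so one needs to upgrade it to rule out all such non-free-factor invariant subgroups. Concretely, I would lift the train track map representing $\phi^n$ on a marked graph $\Gamma$ with $\pi_1(\Gamma) = \free$ to the covering $\hat\Gamma \to \Gamma$ corresponding to (a representative of) $\free'$, restrict to the Stallings core $\core(\hat\Gamma)$, and use that a generic leaf of $\Lambda^+(\phi^n)$ lifts to the cover if and only if it stays inside $\core(\hat\Gamma)$; the exponential expansion of the train track combined with the finiteness of $\core(\hat\Gamma)$ and the density of leaves of $\Lambda^+(\phi^n)$ (its minimality) forces $\hat\Gamma = \core(\hat\Gamma)$, i.e., the cover is finite. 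This is where the theorem genuinely uses ``fully irreducible,'' and it is the only part of the argument not already packaged by \cref{lemIntersect} and the simple commutator/free-factor constructions above.
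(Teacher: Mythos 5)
The easy direction (constructing an infinite-index free-by-cyclic subgroup when $\phi$ fails to be fully irreducible or atoroidal) is correct, and your use of \cref{lemIntersect} to reduce the hard direction to showing that a finitely generated $\phi^n$-invariant subgroup $\free'\le\free$ is cyclic, a proper free factor, or finite-index is exactly the right framing; this mirrors the approach of the cited source \cite{Mut21}, which is built around the same observation.

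However, the crux of the hard direction is stated as a theorem that does not exist in the literature as you phrase it: ``the unique attracting lamination $\Lambda^+(\phi^n)$ is carried by any $\phi^n$-periodic finitely generated subgroup of rank $\ge 2$ that is not a proper free factor.'' Bestvina--Feighn--Handel's lamination theory is developed for free factor systems, not arbitrary $\phi$-invariant subgroups, so there is no off-the-shelf citation here, and the claim needs a genuine argument. The missing intermediate step is to show that the restriction $\psi\in\out(\free')$ of $\phi^n$ is \emph{exponentially growing}; only then does $\free'$ carry an attracting lamination, and only then can uniqueness of $\Lambda^+(\phi^n)$ (full irreducibility) force that lamination to be $\Lambda^+$. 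Ruling out polynomial growth of $\psi$ uses something you have not invoked: either atoroidality again (a polynomially growing $\psi$ on a rank-$\ge 2$ subgroup must have a $\psi$-periodic nontrivial conjugacy class, hence a $\phi$-periodic one), or Bestvina--Handel's fact that a fully irreducible train track map has at most one indivisible Nielsen path, so $\phi$-periodic conjugacy classes in $\free$ form at most a rank-one cyclic system, incompatible with a polynomially growing rank-$\ge 2$ restriction. Relatedly, the phrase ``a generic leaf of $\Lambda^+(\phi^n)$ lifts to the cover if and only if it stays inside $\core(\hat\Gamma)$'' conflates two things: every path lifts to a covering space, so the content is whether some lift of a leaf lies in the finite core, i.e., whether $\free'$ carries the lamination -- which is precisely what you are trying to prove, not a hypothesis to be used. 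Finally, even granting that a leaf lies in $\core(\hat\Gamma)$, the conclusion $\hat\Gamma=\core(\hat\Gamma)$ is not immediate from ``exponential expansion plus density''; it requires a Whitehead-graph (or similar combinatorial) argument using irreducibility of the train track at each vertex, again not an off-the-shelf BFH statement. In short, your structure is sound and parallels \cite{Mut21}, but the lamination-carrying lemma at the heart of the hard direction is asserted rather than derived, and as written it also misplaces where atoroidality/full irreducibility actually enter.
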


In \cite[p.\,48]{Mut21}, we conjecture that being fully irreducible and atoroidal is a geometric invariant, but it is not clear what the equivalent property of the Cayley graph would be.
In fact, we suspect something more general!
Recall the definition of {(lamination) depth}~$\mf d(\phi)$: the length of the longest properly nested sequence(s) of attracting laminations for~$\phi$.

\begin{conj}\label{conjHeight}
If $\G, \G'$ are quasi-isometric, then $\mf d(\phi) = \mf d(\psi)$.
\end{conj}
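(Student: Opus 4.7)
The plan proceeds in two stages: first establish $\mf d(\phi)$ as a group invariant of $\G$, then promote it to a geometric invariant. Throughout I would induct on $\mf d(\phi)$; the base case $\mf d(\phi) = 0$ is precisely \cref{corQIInv}, since $\mf d(\phi) = 0$ is equivalent to $\G$ being polynomial.

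For the group invariance stage, the goal is to characterize $\mf d(\phi)$ via a hierarchy of $\G$ analogous to the slender hierarchy of \cref{corGroupInv}. The key observation is that any chain $\Lambda_1 \subsetneq \Lambda_2$ in $\Lambda^+(\phi)$ forces $\Lambda_1$ to be carried by a proper $\phi$-periodic conjugacy class of free factor systems of $\free$: on such a carrier $\mb A \le \free$, the restriction $\phi_A \in \out(\mb A)$ satisfies $\mf d(\phi_A) < \mf d(\phi)$, and $\mb A$ determines a proper free-by-cyclic subgroup $\G_A = \mb A \rtimes_{\Phi_A} \Z$ of (a finite-index subgroup of) $\G$. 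The candidate characterisation reads: $\mf d(\phi)$ is one plus the maximum of $\mf d$ over those free-by-cyclic subgroups of $\G$ that arise as carriers of a maximal (outermost) attracting lamination, with the convention $\mf d = 0$ when $\G$ is polynomial.

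For the geometric invariance stage, assume inductively that the statement holds for monodromies of smaller lamination depth. By \cref{thmRelHyp}, $\G$ is hyperbolic relative to $\mc P(\G)$, and by the Behrstock--Dru\c{t}u--Mosher theorem (quoted after \cref{thmRelHyp}) any quasi-isometry $f\colon \cay(\G) \to \cay(\G')$ matches $\mc P(\G)$ with $\mc P(\G')$ up to finite Hausdorff distance. The lamination-carrying subgroups $\G_A$ from Stage~1 are free-by-cyclic, hence undistorted in $\G$ by \cref{lemUndistorted}. Combining the intrinsic characterisation of $\mf d$ with a quasi-isometric rigidity statement for these subgroups would let the inductive hypothesis apply to each $\G_A$ and its image under $f$, yielding $\mf d(\psi) = \mf d(\phi)$.

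The main obstacle is that no known quasi-isometry invariant of $\cay(\G)$ is sensitive to the nesting of attracting laminations beyond the outermost polynomial layer detected by relative hyperbolicity. Even the simplest nontrivial case, $\mf d(\phi) = 1$ with $\phi$ atoroidal, reduces via \cref{thmHyperbolic} to distinguishing fully irreducible monodromies among atoroidal ones on a hyperbolic $\G$---which is exactly the open geometric-invariance conjecture discussed at the end of this section. Until a q.i.-rigid feature of $\cay(\G)$ is identified that encodes one level of EG-stratum nesting in a relative train track representative of~$\phi$ (a lamination-sensitive refinement of Macura's divergence or Hagen's thickness, perhaps), the inductive scheme above has no engine beyond its base case, which is precisely why the author leaves the statement as a conjecture and notes that even group invariance is not yet known.
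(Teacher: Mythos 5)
The statement you were asked to treat is an open conjecture: the paper provides no proof, and the surrounding text explicitly records that even \emph{group} invariance of the lamination depth $\mf d(\phi)$ is unknown. Your writeup correctly recognises this and is not a proof but a candid assessment of the obstacles; it tracks the paper's own discussion very closely. In particular, you identify the base case $\mf d(\phi)=0$ with \cref{corQIInv}, you observe that $\mc P(\G)$ and the $\Z$-children are quasi-isometry-rigid (via \cref{thmRelHyp} and the Behrstock--Dru\c{t}u--Mosher theorem) but insensitive to the internal nesting of attracting laminations, and you pinpoint that the depth-$1$ atoroidal case reduces to the geometric invariance of ``fully irreducible and atoroidal'' --- all of which the author states, nearly verbatim, in the closing paragraphs of \cref{secRelHyp}.

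One small technical wrinkle in your Stage~1 sketch: for an \emph{outermost} lamination $\Lambda \in \Lambda^+(\phi)$, the free factor support is frequently all of $\free$ (for instance, whenever $\phi$ is fully irreducible), so it cannot directly supply the proper free-by-cyclic subgroup $\G_A$ that your inductive characterisation needs. The chain structure would instead have to be recorded through the free factor system supporting the strata \emph{below} the top EG stratum of a relative train track representative --- equivalently, via the carriers of the laminations strictly below the top of a maximal chain. With that adjustment the proposed recursion $\mf d(\phi) = 1 + \max_A \mf d(\phi_A)$ is plausible, but establishing it (and then promoting it to a geometric invariant) is precisely the open problem; your conclusion that no presently known q.i.-rigid feature of $\cay(\G)$ detects one level of EG-stratum nesting is the correct diagnosis, and it is why the statement remains a conjecture.
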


The depth 0 case of the conjecture is precisely \cref{corQIInv}.
For atoroidal outer automorphisms, being fully irreducible may be equivalent to having depth 1 and no fixed free splittings;
thus, the depth 1 case of the conjecture may be equivalent to the geometric invariance of being fully irreducible and atoroidal.



\bibliography{zrefs}
\bibliographystyle{alpha}

\end{document}